\documentclass[10pt]{amsart}
\usepackage[margin=1.2in]{geometry}
\usepackage{amssymb,amsthm,amsmath}
\setcounter{tocdepth}{1}

\usepackage[utf8]{inputenc}
\usepackage{preamble}
\title{{Non-archimedean periods for log Calabi-Yau surfaces} }
\author{Soham Karwa, Jonathan Lai}
\begin{document}

\begin{abstract}

We prove the first instance of a conjecture by Kontsevich-Soibelman \cite{Kontsevich2006} that the non-archimedean period map recovers the analytic periods in the case of log Calabi-Yau surfaces. In particular, we show that the $K$-affine structure, a natural enhancement of the singular integral affine structure on the essential skeleton, determines the isomorphism type of the log Calabi-Yau surface. 

\end{abstract}
\maketitle

\section{Introduction}

\subsection{Main result}

Let $(Y,D)$ be a pair with $Y$ a smooth projective rational surface over $\C$ and $D \in |-K_Y|$ either an irreducible rational nodal curve or a cycle of $n \geq 2$ smooth rational curves. We call such a pair a \textit{Looijenga pair}. The open surface $U = Y\backslash D$ is a \textit{log Calabi-Yau surface} which has a distinguished volume form $\Omega$ that is unique up to scaling. We fix $\Omega$ such that such that $\int_{\gamma_0}\Omega = (2\pi i)^2$ where $\gamma_0$ is the homology class of a 2-torus inside $U$.  We say the pair $(Y,D)$ is \textit{generic} if it contains no smooth rational curves of self-intersection $-2$ disjoint from $D$; similarly we say $U$ is \textit{generic} if it contains no smooth rational curves of self intersection -2. 

Let $U$ be a log Calabi-Yau surface over the field of meromorphic Laurent series, denoted by $\C((t))_\text{mer}$. This gives rise to a family of log Calabi-Yau surfaces over a punctured disk $\Delta^*\subseteq \C^*$ which is meromorphic at the origin. We denote this family by $ \pi: U\rightarrow \Delta^*$.  Let $U_t = \pi^{-1}(t)$ and $\Omega_t$ be the normalised volume form on $U_t$. Then one can define the \textit{period map}
\begin{align}
\begin{split}
    \mathcal{P}: H_2(U_t,\Z) /\Z[\gamma_0]&\longrightarrow \C((t))^\times_{\text{mer}} \\ 
	\gamma_t &\longmapsto \exp\left(2\pi i\frac{\int_{\gamma_t} \Omega_t}{\int_{\gamma_0}\Omega_t}\right). 
\end{split}
\label{1per}
\end{align}

For the rest of the paper, we set $K:= \C((t))$ to be the field of Laurent series. When endowed with the $t$-adic absolute value given by the order of vanishing at $t=0$, $K$ is a complete non-archimedean field. There is a natural inclusion $\C((t))_\text{mer}$ into the field $K$.  The family $U$ can be viewed as a variety over $K$ and we can associate a topological space, $U^\an$ called the \textit{Berkovich analytification of $U$}. This is a partial compactification of the space of real valuations on the function field of $U$ and there exists a map of topological spaces $$\pi : U^\an \longrightarrow \Sk(U)$$ onto a canonical piecewise linear subspace $\Sk(U) \subseteq U^\an$, called the \textit{essential skeleton}. Outside of a discriminant locus $\Gamma\subseteq \Sk(U)$ of codimension 2, $\pi$ is an affinoid torus fibration; this is an analogue of a smooth real torus fibration. We refer to this as a \textit{non-archimedean SYZ fibration}.  The subspace $\Sk(U)$ is homeomorphic to $\R^2$. The fibration induces an integral affine structure with singularities on $\Sk(U)$; locally the integral affine functions are absolute values of invertible analytic functions on the fibers of the fibration.  We construct a fibration such that the smooth locus $\Sk(U)^\sm$ is an integral affine manifold with only focus-focus singularities in \S\ref{logcy}. By taking a suitable quotient of $\pi_*(\Oc_{U^\an}^\times)$, Kontsevich and Soibelman \cite{Kontsevich2006} consider an enhancement of the integral affine structure which they call a $K$-affine structure. We denote this quotient by $\Aff_K$ and it fits into a short exact sequence 
\begin{align*}
	0 \rightarrow K^\times \rightarrow \Aff_K \rightarrow \check{\Lambda} \rightarrow 0
\end{align*}
where $\check{\Lambda}$ is the sheaf of integral cotangent vectors on $\Sk(U)^\sm$. The $K$-affine structure $\Aff_K$ defines an element of $\text{Ext}^1(\check{\Lambda},K^\times) \cong H^1(\Sk(U)^\sm, \Lambda\otimes K^\times)$, where $\Lambda$ is the sheaf of integral tangent vectors on $\Sk(U)^\sm$. Let $\iota : \Sk(U)^\sm \rightarrow \Sk(U)$ denote the natural inclusion. Then the \textit{non-archimedean period map} is defined as the map
\begin{align*}
	\mathcal{P}^\an : H_1(\Sk(U), \iota_*\check{\Lambda})_\text{tf}\longrightarrow K^\times 
\end{align*}
given by pairing with $[\Aff_K]$ where $(\cdot)_\text{tf}$ denotes the torsion-free subgroup. Kontsevich-Soibelman made the following general conjecture for Calabi-Yau varieties: 
\begin{conj}[\cite{Kontsevich2006} Conjecture 10]
	Given a maximally degenerate $n$-dimensional Calabi-Yau variety $U$ over $\C((t))_\text{mer}$, we have a commutative diagram 
    
    \begin{figure}[H]
	\begin{center}
		\begin{tikzcd}
H_1(\Sk(U),i_*\Lambda)_{\text{tf}}\ar[rr,"\mathcal{P}^\an"]\ar[rd]\ar[rd] & & K^\times \\ 
			&H_n(U_t,\Z)/\Z[\gamma_0] \ar[ur,"\mathcal{P}"] & 
		\end{tikzcd}
	\end{center}
	\end{figure}
    where $\gamma_0$ is the class of a real $n$-dimensional torus inside $U_t$. 
\label{perconj}
\end{conj}

\begin{rmk}
    In the statement of the conjecture in \cite{Kontsevich2006}, they also assume the existence of a real $n$-dimensional torus fibration over $\Sk(U)^\sm$ for all $t\neq 0$ sufficiently small and take $\gamma_0$ to be a fundamental class of a fibre of this fibration. 
\end{rmk}
The main theorem of this paper proves this conjecture in the case of log Calabi-Yau surfaces. We first prove a more general result over $K$. Let $(Y,D)$ be a Looijenga pair over $K$ with $D = D_1 + ... + D_m$ an ordering of the irreducible components of $D$. Let
\[D^\perp := \{L\in \Pic(Y):L \cdot D_i = 0 \text{ for all } i \}.\] 

When $U$ is defined over $\C((t))_\text{mer}$, there is an isomorphism $D^\perp \cong H_2(U_t,\Z)/\Z[\gamma_0]$. 
Then the period map can also be realised as \cite[Prop 3.12]{friedman2016geometry} \begin{align}
\begin{split}
    \mathcal{P}: D^\perp &\longrightarrow \Pic^0(D)\cong \G_m\\
    L &\longmapsto L|_D
    \end{split}
    \label{2per}
\end{align}
where the isomorphism $\Pic^0(D) \cong \G_m$ is canonical after a choice of orientation of $D$.  






\begin{thm}
    Let $U$ be a log Calabi-Yau surface over $K$ satisfying Assumption \ref{assume}. Then we have the commutative diagram 
    
      \begin{figure}[H]
	\begin{center}
		\begin{tikzcd}
			H_1(\Sk(U),i_*\Lambda)_{\text{tf}}\ar[rr,"\mathcal{P}^\an"]\ar[rd,"\sim"]\ar[rd] & & K^\times \\ 
			&D^\perp \ar[ur,"\mathcal{P}"] & 
		\end{tikzcd}
	\end{center}
	\end{figure}
	\label{perequalintro}
\end{thm}

\begin{rmk}
    The assumption \ref{assume} in Theorem \ref{perequalintro} roughly says that the log Calabi-Yau surface arises from the generic fibre of a Looijenga pair $(\Yc,\Dc)$ over $R= \C[[t]]$ with a genericity assumption on the special fibre of $(\Yc,\Dc)$. 
\end{rmk}

Our final result is in fact stronger than Theorem \ref{perequalintro}, see Theorem \ref{perequalfinal}. The proof of Theorem \ref{perequalfinal} is done in two steps. First, we prove it for the case of generic log Calabi-Yau surfaces satisfying Assumption \ref{assume} by constructing a non-archimedean SYZ fibration and the $K$-affine structure associated to it. This defines the non-archimedean period map as above and we prove it agrees with the classical period map. Then by analytic continuation, we can uniquely extend the non-archimedean period map to cover the case of all log Calabi-Yau surfaces that satisfy the weaker assumption of Theorem \ref{perequalfinal} and prove that it factors through the period map. Conjecture \ref{perequalintrocor} is an immediate corollary: 

\begin{cor}
    Let $U$ be a log Calabi-Yau surface over $\C((t))_\text{mer}$ satisfying Assumption \ref{assume}. Then we have the commutative diagram 
    
      \begin{figure}[H]
	\begin{center}
		\begin{tikzcd}
			H_1(\Sk(U),i_*\Lambda)_{\text{tf}}\ar[rr,"\mathcal{P}^\an"]\ar[rd,"\sim"]\ar[rd] & & K^\times \\ 
			&H_2(U_t,\Z)/\Z[{\gamma_0}] \ar[ur,"\mathcal{P}"] & 
		\end{tikzcd}
	\end{center}
	\end{figure}
	\label{perequalintrocor}
\end{cor}

By Carlson's theory of extensions of mixed Hodge structures \cite{carlson1985one}, we know the mixed Hodge structure  $H^2(U)$ of the open surface $U$ is classified by the period map. In particular, Theorem \ref{perequalintro} implies the following result by a simple application of the Torelli theorem for log Calabi-Yau surfaces 
\cite{friedman2016geometry,gross_hacking_keel_2015}. 

\begin{cor}
   The $K$-affine structure on $\Sk(U)$ determines the isomorphism type of $U$.
   \label{cor:theorem}
\end{cor}

 \subsection{Motivation}

The use of non-archimedean geometry techniques is motivated by mirror symmetry. Let $X \xrightarrow{\pi} \Delta^*$ be a family of $n$-dimensional Calabi-Yau varieties over the punctured disk $\Delta^*\subseteq \C^*$. We further assume the family is maximally degenerate \ie the monodromy acting on the degree $n$ cohomology of the general fiber is maximally unipotent. Then the Strominger-Yau-Zaslow (SYZ) conjecture \cite{syz} roughly states that the general fiber $X_t = \pi^{-1}(t)$, admits a fibration $\rho: X_t \rightarrow B$, whose base $B$ is a real $n$-dimensional topological manifold and the fibers are special Lagrangian tori away from a discriminant locus $\Gamma$ of codimension $\geq 2$ in $B$.
The mirror partner $\check{X}_t$ of $X_t$ can then be constructed by dualising the torus fibration over the smooth locus
and deforming the dual fibration by so-called quantum corrections.
\begin{figure}[H]
	\begin{tikzcd}
		X_t \ar[rd, "\rho"] & & \check{X}_t \ar[ld,"\check{\rho}_t"]\\
		& B& 
	\end{tikzcd}
	\caption{Strominger-Yau-Zaslow conjecture}
\end{figure}
A fundamental insight of Kontsevich and Soibelman \cite{Kontsevich2006} is that one should be able to construct an analogue of the fibration in the world of non-archimedean geometry. Unlike its archimedean version, the non-archimedean SYZ fibration has been constructed for proper varieties in \cite{nicaise_xu_yu_2019} and extended to certain log Calabi-Yau varieties of interest in Section \ref{logcy}. It is important to note that the non-archimedean version can realise the original goal of constructing a mirror over the complex numbers by non-archimedean GAGA and algebrization techniques. 

Given a (log) Calabi-Yau variety over $K$, there exists a canonical piecewise linear subspace $\Sk(X)\subseteq X^\an$, called the \textit{essential skeleton}, and a strong deformation retraction $\rho: X^\an \rightarrow \Sk(X)$.  The construction of the retraction depends on a choice of degeneration of $X$ and in \cite{nicaise_xu_yu_2019}, the authors prove there is a minimal choice (but not necessarily unique) which gives rise to an affinoid torus fibration away from a discriminant locus in $\Sk(X)$ of codimension $\geq2$. The fibration induces a singular integral affine structure on $\Sk(X)$ given by the pushforward of the sheaf of invertible analytic functions via $\rho$. With some modification, this can be shown to agree with the integral affine structure arising from classical SYZ fibrations in certain cases \cite{mazzon2023toric}. Choosing a different model results in a different integral affine structure. 

 Following ideas in \cite{Kontsevich2006}, we show that the essential skeleton $\Sk(X)$, a much simpler object than $X$, and the $K$-affine structure contain important geometric information about $X$. Corollary \ref{cor:theorem} is the first non-trivial result in this direction.  This is in the spirit of the reconstruction problem in mirror symmetry which says that one should be able to recover the original variety from the base of the SYZ fibration and certain structures on it.  

\subsection{Relations to other works}To the best of our knowledge, the approach of computing periods using non-archimedean techniques in the context of mirror symmetry has not been explored other than the suggestions of \cite{Kontsevich2006}.

There has been work in understanding the relationship between tropical geometry and periods. For example, it is known that minus the valuation of the $j$-invariant of an degenerate elliptic curve over a non-archimedean field coincides with the cycle length of the tropical elliptic curve obtained by tropicalisation \cite{tropj1,tropj2}. This can be upgraded to the case of maximally degenerate abelian varieties as in \cite{karwa}.
The leading term of period integrals of families of varieties has been also computed using tropical geometry: the case of Riemann surfaces was done in \cite{degenrstrop}, toric Calabi-Yau hypersurfaces in \cite{gamma,tropicalperiod}.

The tropicalisation and moment map record the radial coordinate of a subvariety of the torus $(\C^\times)^n$ and gives rise to the notion of an integral affine structure on its image. In a similar vein, the $K$-affine structure remembers the phase which has been studied in 
\cite{phasetropical, psl2phase}; the $K$-affine structure deserves further understanding to enhance many results in tropical geometry concerning how the tropicalisation of a variety describes important information about the variety. Corollary \ref{cor:theorem} is a first step in this direction. Understanding the $K$-affine structure of K3 surfaces and computing periods of K3s will rely heavily on the results in this paper. 

Hosono's conjecture \cite{hosono} predicts that periods on the mirror equal the pairing of certain explicit hypergeometric series with the Chern classes of vector bundles, for Batyrev mirror pairs of toric Calabi-Yau hypersurfaces. By studying the asymptotics of Hosono's conjecture in the large complex structure limit, one arrives at the mirror symmetry $\hat{\Gamma}$-conjecture \cite{iritani2023gamma}. Abouzaid-Ganatra-Iritani-Sheridan \cite{gamma} use the tropicalisation map, which acts as a limiting SYZ fibration, to compute the leading term of these period integrals for toric Calabi-Yau hypersurfaces. It would be interesting to explore how the non-archimedean method could aid in the computation. 

Ruddat and Siebert give a period formula in \cite{ruddatsiebert} for toric degenerations arising from the Gross-Siebert program. In particular, they compute the period integrals of holomorphic volume forms over $n$-dimensional cycles corresponding to tropical one-cycles in the intersection complex of the central fiber. This formula was applied in \cite{lai2023mirror} to compute the periods of mirror families of positive log Calabi-Yau surfaces arising from the mirror construction in \cite{ghk}.

There should exist a notion of the mirror $k$-affine structure, which the present work suggests can be used to construct the mirror. In the spirit of Kontsevich-Soibelman as well as the Gross-Siebert program \cite{grosssiebert}, recent developments in non-archimedean enumerative geometry, as in \cite{yu2016enumeration, yu2, keelyu}, should be a key component.

\subsection{Outline}
In \S\ref{prelim}, we review several preliminaries needed throughout the paper. Here we will introduce all the necessary theory from Berkovich analytic geometry. In \S\ref{logcy}, we introduce log Calabi-Yau surfaces in more detail and construct the degeneration and fibration needed for the proof of our main results. This section only deals with the generic case. In \S\ref{tropconst}, we define the group of tropical cycles $H_1(\Sk(U)^\sm,\check{\Lambda})$ which is the domain of the non-archimedean period map and prove a tropical correspondence result. In \S\ref{Kaff}, we define $K$-affine structures and construct an explicit one that will be used to define the non-archimedean period map for generic log Calabi-Yau surfaces.  In \S\ref{period}, we define the non-archimedean period and compute it for log Calabi-Yau surfaces. We end this section by extending the non-archimedean period map to the case of non-generic log Calabi-Yau surfaces and arguing it coincides with the period map. 

\subsection{Acknowledgements}

The authors would like to express their gratutude to Johannes Nicaise for his support and suggestions during this project. The first author would also like to thank Robert Crumplin and Dhruv Ranganathan for useful conversations. The first author was supported by the Engineering and Physical Sciences Research Council [EP/S021590/1]. The EPSRC Centre for Doctoral Training in Geometry and Number Theory (The London School of Geometry and Number Theory), University College London. The second author  was supported by EPSRC grant EP/S025839/1. 

\section{Preliminaries}
\label{prelim}
We denote by $K = \C((t))$ the field of Laurent series over $\C$; it has the structure of a complete discrete valuation field with uniformiser $t$. Let $R = \C[[t]]$ denote its ring of integers and $k = \C$, its residue field. We denote the discrete valuation by  $\val_K: K^\times \rightarrow \Z$ and define an  absolute value $|a|_K = \exp(-\val_K(a)) $.  We will always take a \textit{K-variety} to mean a normal and  geometrically irreducible $K$-scheme. 

\subsection{Models}
\label{models}

\begin{defn}
\label{modeldef}
	Let $X$ be a normal integral $K$-scheme of finite type. 
	\begin{enumerate}
		\item An $R$-model of $X$ is a proper flat $R$-scheme $\Xc$ of finite type  endowed with an isomorphism $\Xc_K \rightarrow X$;
		\item If $\Xc$ and $\Yc$ are $R$-models of $X$, then a morphism of $R$-models of $X$ is a morphism of $R$-schemes $h: \Xc \rightarrow \Yc$ such that $h_K: \Xc_K \rightarrow \Yc_K$ is an isomorphism that commutes with the isomorphisms $\Xc_K \rightarrow X$ and $\Yc_K \rightarrow X$;
		\item A \textit{simple normal crossings (snc) model} is a regular $R$-model $\Xc$ whose special fibre $\Xc_k$ is a divisor with simple normal crossings singularities. 
	\end{enumerate}
\end{defn}

We record the following definition for use later. 
\begin{defn}
	Let $\Xc$ be a Noetherian scheme and $D$ an effective divisor on $\Xc$, with prime components $D_i$ for $i \in I$. A \textit{stratum} of $D$ is a connected component of the scheme-theoretic intersection $D_J = \cap_{j \in J} D_j$ for some $ J\neq \emptyset\subseteq I$. An \textit{open stratum} is a stratum $S$ minus the union of the prime components of $D$ which do not contain $S$. Given a stratum $S$ of $ D$, let $S^\circ$ denote the open stratum of $S$. 
\end{defn}

\begin{defn}
	An \textit{snc pair} $(\Xc,\mathcal{H})$ consists of a proper regular flat scheme $\Xc$ over $R$ and $\Hc = \sum_{i \in I^\mathbf{h}} \Dc_i$ a sum of effective Cartier divisors on $\Xc$ such that 
	\begin{enumerate}[label=(\roman*)]
		\item each $\Dc_i$ has irreducible support;
		\item the divisor $\Xc_k + \Hc $ is snc. 
	\end{enumerate}
\end{defn}

Given an snc pair $(\Xc,\Hc)$, we will be solely interested in the case when the generic fibre of $\Xc\backslash \Hc$ is a log Calabi-Yau surface. Nevertheless, we make a more general definition of a model relative to an snc divisor: 
\begin{defn}
	Let $X$ be an algebraic variety over $K$. An \textit{snc log-model} of $X$ is an snc pair $(\Xc,\Hc)$ together with an isomorphism $X \cong (\Xc\backslash\Hc )_K$. 
\end{defn}


The existence of an snc log model of $X$ imposes a strict condition on $X$: 
\begin{lem}
An snc log model of $X$ exists if and only if $X$ is smooth. 	
\end{lem}

\begin{proof}
	First suppose $X$ is smooth. By Hironaka's resolution of singularities \cite{hironaka1964resolution}, there exists an snc compactification $Y$ of $X$ \ie a dense open embedding of $X$ into a smooth proper $K$-scheme $Y$ such that the boundary $D = Y\backslash X$ (with its induced reduced structure) is an snc divisor. Then by Nagata compactification \cite[ \href{https://stacks.math.columbia.edu/tag/0F3T}{Section 0F3T}]{stacks}, there is a proper and flat $R$-scheme  $\Yc$ whose generic fibre is isomorphic to $Y$. Let $\Dc$ be the scheme-theoretic closure of $D$ inside $\Yc$. If $\Yc_k + \Dc$ is snc, then we are done. Otherwise, by another application of resolution of singularities, there is a proper birational morphism $h : \Yc' \rightarrow \Yc$ such that $\Yc'$ is regular, $h_K : \Yc'_K \rightarrow \Yc_K$ is an isomorphism and $h^*\Dc + \Yc_k' $ is snc. Then the pair $(\Yc',h^*\Dc)$ is an snc log model of $X$. 
	
	Suppose an snc log model $(\Xc,\Hc)$ of $X$ exists. By definition, $\Xc_K$ is regular and  $K$ is a perfect field so $\Xc_K$ is smooth. Thus $X\subset \Xc_K$ is smooth. 
\end{proof}

Let $(\Xc,\Hc)$ be an snc pair and consider the finite set  $\{\Dc_i\}_{i \in I^\mathbf{v}}$ of irreducible components of the special fibre $\Xc_k^\red$ with reduced scheme structure. For every $i \in I^\mathbf{v}$, let $N_i$ denote the multiplicity of $\Dc_i$ in $\Xc_k$. We refer to the divisors $\Dc_i$ indexed by $i \in I^\mathbf{v}$ as \textit{vertical divisors}. Likewise, the divisors indexed by $i \in I^\mathbf{h}$ are called \textit{horizontal divisors}. This terminology matches up with the usual definitions of horizontal and vertical divisors in the literature \cite{skeltrop}. For every non-empty set $I\subseteq I^\vb\cup I^\hb$, we write $\Dc_I = \cap_{i \in I} \Dc_i$.

\subsection{Berkovich spaces}
\label{berkovich}

Let $X$ be a $K$-variety.

\begin{defn}[\cite{berkspec}]
	The \textit{Berkovich analytification of X} is set-theoretically given by pairs $(\xi_x,|\cdot|_x)$ where: 
	\begin{enumerate}[label=(\roman*)]
		\item $\xi_x$ is a scheme-theoretic point of $X$; 
		\item $|\cdot|_x: \kappa(x) \longrightarrow \R_{\geq 0}$ is an absolute value on the residue field $\kappa(x)$ extending the $t$-adic absolute value $|\cdot|_K$ on $K$.  
	\end{enumerate}
	It comes equipped with the weakest topology such that:
	\begin{enumerate}
		\item the forgetful map $\iota: X^\an \rightarrow X$ is continuous;
		\item for any Zariski open $U  \subseteq X$ and $f\in \Oc_X(U)$, the evaluation map 
 \begin{align*}
			|f| : \iota^{-1}(U) & \longrightarrow \R_{\geq 0} \\ 
			(\xi_x,|\cdot|_x) &\longmapsto |f|_x
		\end{align*}
		is continuous. 
	\end{enumerate}
\end{defn}

\begin{eg}
	When $X = \spec A$ is affine, $X^\an$ is given by the set of multiplicative seminorms on $A$ that extend the absolute value $|\cdot|_K$ on $K$. 
\end{eg}

The topological space $X^\an$ is equipped with a sheaf of analytic functions which makes $\iota: X^\an \rightarrow X$ a morphism of locally ringed spaces. The topological space $X^\an$ is Hausdorff, and it is compact if and only if $X/K$ is proper. The subset of birational points $X^\text{bir}\subseteq X^\an$ is defined as $\iota^{-1}(\eta_X)$ where $\eta_X$ is the generic point of $X$. These points have the form $(\eta_X,|\cdot|)$ where $|\cdot| : K(X)\rightarrow \R_{\geq 0}$ is an absolute value on the function field $K(X)$ of $X$ extending the absolute value on $K$.

\begin{eg}
\label{divmon}
 Let $\Xc$ be a $R$-model of $X$ and $E$ an irreducible component of $\Xc_k$. Denote the generic point of $E$ by $\xi_E$. Then $\Oc_{\Xc,\xi_E}$ is a discrete valuation ring whose valuation $v_E$ is given by the order of vanishing along $E$ and its fraction field  is $K(X)$. The normalised valuation $\frac{1}{N_E}v_E$, where $N_E$ is the multiplicity of $E$ in $\Xc_k$, determines an absolute value on $K(X)$ which extends the absolute value on $K$. This defines a birational point $x$ of $X^\an$ and is called a  \textit{divisorial point} of $X^\an$. 

More generally, given an snc model $\Xc$ of $X$, let the tuple $(E_1,...,E_r)$ denote distinct irreducible components of $\Xc_k$ such that $E =\bigcap_i E_i \neq \emptyset$. Let $\xi_E$ denote the generic point of a connected component of $E$. Then there is a regular system of local parameters $z_1,...,z_r \in \Oc_{\Xc,\xi_E}$, positive integers $N_1,...,N_r$ and a unit $u \in \Oc_{\Xc,\xi_E}^\times$ such that \begin{align*}
	t = uz_1^{N_1}...z_r^{N_r}
\end{align*}
where locally at $\xi_E$, the local equation $z_i=0$ defines $E_i$ and $N_i$ is the multiplicity of $E_i$ in $\Xc_k$. For each choice of tuple $\alpha = (\alpha_1,...,\alpha_r) \in \R_{\geq 0}^r$ such that $\sum_i \alpha_i N_i = 1$, there exists a unique valuation \cite[Proposition 2.4.4]{mustatanicaise}
\begin{align*}
	v_{E,\alpha} : \Oc_{\Xc, \xi_E}\backslash \{0\} \longrightarrow \R 
\end{align*}
such that $v_{E,\alpha}(z_i) = \alpha_i$ for all $i$. This extends the valuation on $K$ since $$v_{E,\alpha}(t) = v(uz_1^{N_1}...z_r^{N_r}) = \sum_i N_i v_{E,\alpha}(z_i) =\sum_i \alpha_iN_i = 1.
$$
The valuation $v_{E,\alpha}$ thus defines a birational point on $X^\an$ that we call a  \textit{monomial point} of $X^\an$. 
We have the inclusions 
\begin{align*}
	X^\text{div} \subseteq X^\text{mon} \subseteq X^\text{bir}\subseteq X^\an,
\end{align*}
where $X^\text{div} $ and $X^\text{mon}$ are the subsets of divisorial and monomial points respectively. By  \cite[Proposition 2.4.9]{mustatanicaise}, the space of divisorial points $X^\text{div}$ is dense in $X^\an$.
\end{eg}

\subsection{Connection to formal schemes}
\label{formal}

\begin{notation}
	If $\Xc$ is a Noetherian $R$-scheme and $C$ is a subscheme of $\Xc$, we let $\widehat{\Xc}_{/C}$ denote the formal completion of $\Xc$ along $C$. If $C$ is equal to the special fibre, we will simply write $\widehat{\Xc}$. 
\end{notation}

\begin{defn}
	We define the following subring $R\{x_1,...,x_n\} \subseteq R[[x_1,..,x_n]]$:
	\begin{align*}
		R\{x_1,...,x_n\} := \{ f = \sum_{I \in \Z_{\geq0}^n}c_I x^I: c_I \in R, |c_I|_K \rightarrow 0 \text{ as } |I| \rightarrow \infty\}
	\end{align*}
	where $x^I = x_1^{I_1}...x_n^{I_n}$.
\end{defn}
Suppose $\Xc$ is a scheme of finite type over $R$ and consider the formal completion $\widehat{\Xc}$ of an $R$-model $\Xc$ of $X$  along its special fibre and its (analytic) generic fibre $\widehat{\Xc}_\eta$ in the category of Berkovich analytic spaces \cite[\S1]{vanish}. This is a compact analytic domain of $X^\an$. We have the following characterisation of the points in the generic fibre:  a point $x = (x,|\cdot|_x) \in X^\an$ belongs to $\widehat{\Xc}_\eta$ if and only if the morphism 

\begin{align*}
	\spec \mathscr{H}(x) \longrightarrow X
\end{align*}
extends to a morphism 
\begin{align*}
	\spec\mathscr{H}(x)^\circ  \longrightarrow \Xc
\end{align*}
where $\mathscr{H}(x)$ is the completion of the residue field $\kappa(x)$ with respect to $|\cdot|_x$ and $\mathscr{H}(x)^\circ$ denotes its valuation ring. If this occurs, the image of the closed point of $\spec \mathscr{H}(x)^\circ$ is called the \textit{centre} (or \textit{reduction}) of $x$ on $\Xc$ and we say that $x$ admits a centre on $\Xc$.

 If $X$ is proper over $K$ then $X^\an = \widehat{\Xc}_\eta$ by the valuative criterion of properness. 
There is an anti-continuous reduction map \begin{align*}
	\red_\Xc : \widehat{\Xc}_\eta \longrightarrow \Xc_k
\end{align*}  
that sends a point $x$ to the centre of $x$ on $\Xc$. 

\begin{eg}
	When $\Xc = \spec(\mathcal{A})$ is affine, then 
	\begin{align*}
		\widehat{\Xc}_\eta = \{ x \in X^\an \ : \ |f(x)| \leq 1 \text{ for all } f \in \mathcal{A} \}. 
	\end{align*}
\end{eg}

\subsection{Skeletons}
\label{skeletons}



\begin{defn}
We define the following simplicial cone complex of an snc pair $(\Xc,\Hc)$: 
	 \begin{align*}
	 	C_{(\Xc,\Hc)} := \left\{\sum_{i \in I^\textbf{v} \cup I^\textbf{h}} a_i \langle {\Dc}_i \rangle \ | \ a_i\geq 0, \bigcap_{i: a_i>0}{\Dc}_i \neq \emptyset \right\} \subset \R_{\geq 0}^{I^\textbf{v}\cup I^\textbf{h}}
	 \end{align*}
		and the polyhedron: 
		\begin{align*}
			\Sigma_{(\Xc,\Hc)} := C_{(\Xc,\Hc)} \cap \left\{\sum_{i \in I^\textbf{v}} N_i a_i = 1\right\}.
		\end{align*}
		The polyhedron $\Sigma_{(\Xc,\Hc)}$ will be called the \textit{skeleton} of the pair $(\Xc,\Hc)$. Note that when $\Hc = 0$, we recover the  definition above of the dual intersection complex of $\Xc$. In this case, we will simply write $\Sigma_\Xc:= D(\Xc_k)$. 
\end{defn}

\begin{rmk}
	The complexes $C_{(\Xc,\Hc)} $ and $\Sigma_{(\Xc,\Hc)} $ are called Clemens cones and polytopes respectively  in \cite{Kontsevich2006, yu2016enumeration}. 
\end{rmk}

We present an alternative combinatorial description when $\Xc_k$ is reduced.  This readily follows from the definition above. 
\begin{lem}
	Let $(\Xc,\Hc)$ be an snc pair and suppose that $N_i=1$ for all $i$. Given $I\subset I^\vb$, define $$\Delta_I := \left\{ \sum_{i \in I} a_i = 1\right\} \subset \R^{I^\vb \cup I^\hb}_{\geq 0}. $$ 
	Then the skeleton $\Sigma_{(\Xc,\Hc)}$ of the pair $(\Xc,\Hc)$ is the simplicial subcomplex of $\R^{I^\vb \cup I^\hb}$ such that:
	\label{description}
	\begin{enumerate}[label=(\roman*)]
		\item $\Delta_{I}$ is a face of  $\Sigma_{(\Xc,\Hc)}$ if and only if $\Dc_I \neq \emptyset$ for $I\subset I^\vb$ 
		\item $\Delta_{I}\times \R^{|J|-1}_{\geq 0}$ is a face of $\Sigma_{(\Xc,\Hc)}$ if and only if $\Dc_{I} \cap \Dc_{J}\neq \emptyset $ for $I\subset I^\vb$  and $J\subset I^\hb$ with $J\neq \emptyset$. Note we allow $I = \emptyset$ in this case. 
	\end{enumerate}
    \label{simplicial}
\end{lem}



\begin{defn}
	Given a stratum $C$ of a model $\Xc$, which corresponds to the stratum $\tau_C$ in $\Sigma_{(\Xc,\Hc)}$, let $\Star(\tau_C)$ denote the \textit{open star} of $\tau_C$ inside $\Sigma_{(\Xc,\Hc)}$. 
\end{defn}
Example \ref{divmon} admits another formulation in the context of snc log pairs.

\begin{prop}\cite[Proposition 2.4.6]{mustatanicaise}
	Let $(\Yc,\Dc)$ be a snc log model for $U = Y\backslash D$ where $\Yc_K \cong Y$ and $\Dc_K\cong D$.  Take a point $x = (x_1,...,x_n) \in \Sigma_{(\Yc,\Dc)}\subseteq \R^{I^\vb\cup I^\hb}$ and let $I = \{ i\in I^\vb\cup I^\hb  : x_i \neq 0\}$. Let $\xi_x$ be a generic point of the intersection $\bigcap_{i \in I} {\Dc}_i$. Then there exists a unique real valuation \begin{align}
		v_x : \Oc_{{\Yc},\xi_y} \backslash\{0\}\longrightarrow \R
	\end{align} such that $v_x(f_i) = x_i$ for all $i \in I^\vb \cup I^\hb$, where $f_i$ is a local equation for ${\Dc}_i$ in $\Oc_{{\Yc},\xi_x}$. The valuation $v_x$ extends the valuation on $K$. 
	\label{valsnclogpair}
\end{prop}

The valuation constructed in Proposition \ref{valsnclogpair} defines a birational point of $Y$ and thus also defines a birational point of $U$.  

\begin{prop}
	Let $(\Yc,\Dc)$ be an snc log model of $U$. Then there is a canonical embedding $\iota_{(\Yc,\Dc)}: \Sigma_{(\Yc,\Dc)}\hookrightarrow U^\an$ with a continuous retraction map $\rho_{(\Yc,\Dc)}: U^\an \rightarrow \Sigma_{(\Yc,\Dc)}$. 
	\label{retract}
\end{prop}
Note that Proposition \ref{retract} also holds in the case $D=0$ and recovers the original theory of retractions. Again, in this case we will write $\iota_\Yc$ and $\rho_\Yc$ respectively. 
\begin{proof}
The embedding is given by Proposition \ref{valsnclogpair}. The map $\rho_{(\Yc,\Dc)}: U^\an \rightarrow \Sigma_{(\Yc,\Dc)}$ is defined as follows. We first formally complete the $R$-scheme $\Yc$ along its special fibre to get a formal scheme $\widehat{\Yc}$. Then by \S\ref{formal}, and since $\Yc$ is proper, we have the reduction map \begin{align*}
		\text{red}_{\Yc} : Y^\an \longrightarrow \Yc_k.
	\end{align*}
	We can restrict $\red_\Yc$ to $U^\an \subset Y^\an$ to get a map $\red: U^\an \longrightarrow \Yc_k$.  Given a point $x \in U^\an$, 
	let $E_1,...,E_n$ be the irreducible components of $\Yc_k$ and $E_{n+1},...,E_m$ the irreducible components of $\Dc$ passing through the point $\text{red}(x)$. Let $\xi_x$ be the generic point of the connected component in the  intersection $\cap_i E_i$ containing $\text{red}(x)$. Then $\rho_{(\Yc,\Dc)}(x)$ is the point in $C_{(\Yc,\Dc)}$ given by $(v_x(f_i))\in \R^{I^\vb \cup I^\hb}$ where $f_i$ is a local equation for $E_i$  in  $\Oc_{\Yc,\text{red}(x)}$. Since $v_x(t) = 1 = \sum_{i\in I^\vb} N_iv_x(f_i)$, the point $\rho_{\Yc,\Dc}(x)$ lies in $\Sigma_{(\Yc,\Dc)}$. By construction, the retraction map is continuous and the right inverse to the embedding $\iota_{(\Yc,\Dc)}$.
\end{proof}

\begin{eg}
	Let $\Xc$ be a snc model of $X$ and $Y$ a stratum of $\Xc_k$. Then the (analytic) generic fibre in the sense of Berkovich, which we denote $]Y[$, of the formal completion $\widehat{\Xc}_{/Y}$  along $Y$ is described as 
	\begin{align*}
		]Y[ = \{x \in X^\an \ : \ \red_\Xc(x) \in Y\}.
	\end{align*}
	Let $\tau_Y \subset \Sigma_{\Xc}$ denote the stratum in the skeleton $\Sigma_\Xc$ corresponding to $Y$. Then 
	\begin{align*}
		\rho_\Xc^{-1}(\Star(\tau_Y)) = ]Y[\subseteq X^\an
	\end{align*}
	and there is a retraction map \begin{align*}
		\rho_Y :]Y[ \rightarrow \Star(\tau_Y)
	\end{align*}
	constructed as in the proof of Proposition \ref{retract}. The retraction $\rho_Y$ coincides with the map $\rho_\Xc$; in particular, the retraction over $\Star(\tau_Y)$ only depends on the formal scheme $\widehat{\Xc}_{/Y}$. 
	\label{strataformal}
\end{eg}


\begin{rmk}
The choice of snc log model is not unique in the following sense. Let $C_{(\Xc,\Hc)}'$ be a finite rational polyhedral subdivision of $C_{(\Xc,\Hc)}$ into simplicial cones whose integer points can be generated by a subset of a basis of $\Z^{I^\vb \cup I^\hb}$. Then by \cite[Chapter II + Chapter IV]{toroidal}, there is an snc pair $(\Xc',\Hc')$ and a proper morphism $\Xc' \rightarrow \Xc $ which induces an isomorphism $(\Xc'\backslash \Hc')_\eta \rightarrow (\Xc\backslash\Hc)_\eta $ such that  $$C_{(\Xc',\Hc')} \cong C_{(\Xc,\Hc)}',S_{(\Xc',\Hc')} \cong S_{(\Xc,\Hc)}' $$ where $S'_{(\Xc,\Hc)}$ is the subdivision of $S_{(\Xc,\Hc)}$ induced by the subdivision $C'_{(\Xc,\Hc)}$. 

Conversely, suppose we blow up an snc pair $(\Xc,\Hc)$ at a connected component of an intersection of irreducible components of $\Xc_k$. Denote the new pair by $(\Xc',\Hc')$ with $\Hc'$ the strict transform of $\Hc$. Then $C_{(\Xc',\Hc')}$ and $S_{(\Xc',\Hc')}$ are finite rational polyhedral subvisions of $C_{(\Xc,\Hc)}$ and $S_{(\Xc,\Hc)}$ respectively. 
\label{subdivision}
\end{rmk}

\subsection{Essential skeleton of a pair}
\label{essskel}

In the introduction, we briefly discussed the essential skeleton of a $K$-variety with respect to a global non-vanishing volume form $\omega$. Here, we describe a relative notion following \cite[\S5.1]{brownmazzon} in a more general setting. 
\begin{defn}
	An \textit{snc pair over $K$} is a pair $(X,\Delta_X)$ where $X$ is a proper and regular $K$-scheme and $\Delta_X = \sum_i \Delta_{X,i}$ an effective $\Q$-Cartier divisor such that $\Delta_{X,i}$ is a prime divisor and the support of $\Delta_X$ is snc. 
\end{defn}

Let $\omega \in K_X^{\otimes m}$ be a regular $m$-pluricanonical form on $X$ with  poles of order at most $m$ along $\Delta_{X,i}$ for each $i$. Thus $\omega$ is a section of $ \Oc_X(m(K_X+\Delta_X))$ and these forms are called $\Delta_X$-logarithmic $m$-pluricanonical forms.

If $\Xc$ is a reduced snc model of $X$, the form $\omega$ is a rational section of $\omega_{\Xc/R}^{\otimes m}\otimes \Oc_\Xc(m\overline{\Delta}_X) $ where $\overline{\Delta}_X = \sum_i \overline{\Delta}_{X,i}$. In particular, it  defines a divisor $\divi_\Xc(\omega)$ on $\Xc$. 

Alternatively, viewing $\omega$ as a rational section of $\omega^{\otimes m}_{X/K}$, 
we can then  associate a weight function $\wt_\omega : X^\an \rightarrow \R $ as in \cite{mustatanicaise, brownmazzon}. The weight function on divisorial points behaves as described in the introduction. Given an snc pair $(\Xc,\Hc)$ and  $x$ a divisorial point of $\Sigma_{(\Xc,\Hc)}$ (under the embedding $\iota_{(\Xc,\Hc)}$) with corresponding valuation $v_x$, we have  $\wt_\omega(x)=v_x(f) + m$ where $f \in K(X)^\times$ and $\text{div}_\Xc(\omega) = \text{div}(f)$ locally around $\text{red}_\Xc(x)$.  
\begin{defn}\cite[\S7]{mustatanicaise}
	The \textit{Kontsevich-Soibelman skeleton} $\Sk(X,\Delta_X,\omega)$ is the closure in Bir$(X)$ of the set $\text{Div}(X)$ of divisorial points where the weight function reaches its minimal value $\wt_\omega(X,\Delta_X) = \inf\{\wt_\omega(x) : x \in \text{Div}(X)\} \in \R\cup \{-\infty\}$. 
\end{defn}

\begin{lem} Let $\omega$ be a $\Delta_X$-logarithmic pluricanonical form. Then  for any snc pair $(\Xc,\Hc)$ of $ X$, we have the inclusion $\Sk(X,\Delta_X,\omega) \subseteq \Sigma_{(\Xc,\Hc)}$.
\end{lem} 

\begin{proof}
	This is a simpler case of Proposition 4.1.6 in \cite{brownmazzon}. 
\end{proof}


\begin{defn}(Essential skeleton)
	The \textit{essential skeleton} $\Sk(X,\Delta_X)$ is the union of all Kontsevich-Soibelman skeleta $\Sk(X,\Delta_X,\omega)$ where $\omega$ ranges over all regular $\Delta_X$-logarithmic pluricanonical forms. 
\end{defn}


\begin{prop}
	Let $(\Xc,\Dc)$ be a reduced snc pair and $(X,\Delta_X)$ an snc pair over $K$ such that $\Xc \times_R K = X$ and $\Dc = \overline{\Delta}_X $ where $\overline{\Delta}_X$ is the scheme-theoretic closure of $\Delta_X$ in $\Xc$. Suppose that $K_X + \Delta_X$ and $K_\Xc + \overline{\Delta}_X$  are semiample.  Then, under the embedding $\iota_{(\Xc,\Dc)}$ from Proposition \ref{retract}, $\Sigma_{(\Xc,\Dc)} = \Sk(X,\Delta_X) \subseteq X^\an $. 
	\label{essential}
\end{prop}

\begin{proof}
	This is again a simpler case of Proposition 5.1.7 in \cite{brownmazzon}.
\end{proof}

\begin{rmk}
	If $(\Xc,\Dc)$ is a reduced snc pair as in Proposition \ref{essential} and $K_X + \Delta_X  \sim 0$, then $\Lc := K_\Xc + \overline{\Delta}_X $ being semiample is equivalent to $K_\Xc + \overline{\Delta}_X \sim 0$.  Indeed, since $\Lc(X) = K^\times$, we have that $\Lc$ must be a multiple of $\Xc_k$ and hence principal. 
\end{rmk}

\subsection{Integral affine structures}
\label{integralaffine}
We recall the basic notions concerning integral affine structures. 

\begin{defn}
	An \textit{integral affine function} on an open subset of $\R^n$ is a continuous real-valued function locally of the form $f(x_1,\dots,x_n) = a_1x_1 +\dots + a_n x_n + b$ with $a_i \in \Z$ and $b \in \R$. We denote the sheaf of integral affine functions on $\R^n$ by $\Aff_{\R^n}$. 
\end{defn}

\begin{defn}
	Let $B$ be a topological manifold. An $n$-dimensional \textit{integral affine manifold} is a ringed space $(B,\Aff_B)$ which is locally isomorphic to $(\R^n, \Aff_{\R^n})$. \end{defn}

We say $B$ is an \textit{integral affine manifold with singularities} if there exists a subset $\Gamma \subseteq B$ which is locally a finite union of closed subspaces of codimension greater than or equal to 2 and $B\backslash \Gamma$ is an integral affine manifold.

\begin{eg}
	For an snc pair $(\Xc,\Hc)$, the embeddings of $C_{(\Xc,\Hc)}$ and $\Sigma_{(\Xc,\Hc)}$ into $\R^{I^\vb \cup I^\hb}$ naturally induce piecewise integral affine structures on $C_{(\Xc,\Hc)}$ and $\Sigma_{(\Xc,\Hc)}$ respectively, via the lattice embedding $\Z^{I^\vb\cup I^\hb} \hookrightarrow \R^{I^\vb \cup I^\hb}$. 
\end{eg}


	If $B$ is an integral affine manifold, then is it also smooth since the transition functions between local models of the ringed space are smooth. Thus given an integral affine manifold $B$, there is a subsheaf $\Lambda \subset \Lambda_\R = TB$ of the tangent sheaf consisting of integral tangent vectors. Similarly, there is a sheaf of integral cotangent vectors $\check{\Lambda} := \mathcal{H}\text{om}(\Lambda, \Z)$ on $B$. By definition, an integral affine structure $\Aff_B$ fits into the short exact sequence 
	\begin{align*}
	0 \rightarrow \R \rightarrow \Aff_B \rightarrow \check{\Lambda} \rightarrow 0 .	
	\end{align*}

\begin{rmk}
	Given an integral affine structure on $B$, there is an associated $\GL_n(\Z)\ltimes \R^n$-torsor on $B$ whose fibre over a point $x$ consists of all possible integral affine coordinate systems at $x$. 
\end{rmk}


\subsection{Affinoid torus fibrations}
\label{affinoid}
Let $T$ be a split multiplicative $n$-dimensional torus over $K$ with character lattice $M$ and cocharacter lattice $N = M^\vee$. 

\begin{defn}
	Define the \textit{tropicalisation map} of $T$ as 
	\begin{align*}
		\trop : T^\an &\longrightarrow N_\R \\
		|\cdot| &\longmapsto (m \mapsto -\log|m|).
	\end{align*}
\end{defn}
\noindent This map is continuous and its fibres are (not necessarily strict) $K$-affinoid tori. 

\begin{eg}
Let $\Xc$ be a regular toric model of $T$ \ie a regular toric scheme over $R$ such that $\Xc_K =T$. By \cite[\S7]{gubler}, such a model is described by a regular fan $\Sigma \subset N_\R \times \R_{\geq 0}$ whose cones intersect $N_\R\times \{0\}$ only at the origin of $N_\R$. Then by the proof of Proposition \ref{retract}, we have a retraction map 
	\begin{align*}
		\rho_\Xc : ]\Xc_k[ \longrightarrow \Sigma_{\Xc} 
	\end{align*}
	and a canonical embedding $\Sigma_{\Xc} \hookrightarrow ]\Xc_k[$. Let $\Sigma_1 = \Sigma \cap (N_\R \times \{1\}) $. Then there is a natural identification of $\Sigma_1$ with $\Sigma_{\Xc}$ given by sending a vertex of $\Sigma_1$ to the primitive generator of the corresponding ray of $\Sigma_\Xc$ and extending linearly \cite[\S7.9]{gubler}. Furthermore, we have $]\Xc_k[ = \trop^{-1}(|\Sigma_1|)$ and $\rho_\Xc$ is the restriction of $\trop$ over $]\Xc_k[$ \cite[Proposition 1.5.2]{mazzon2023toric}.
	\label{toricfibration}
\end{eg}

When $X_\Sigma$ is an $n$-dimensional $T$-toric variety given by a fan $\Delta $, Kajiwara \cite{kaj} and Payne \cite{payne} independently constructed a tropicalisation map 
\begin{align*}
	\trop_\Sigma : X^\an \longrightarrow N_\R(\Delta)
\end{align*}
where $N_\R(\Delta)$ is a partial compactification of $N_\R \cong \R^n$ which is uniquely determined by $\Delta$. The space $N_\R(\Delta)$ has the structure of an extended cone complex. Restricting the map $\trop_\Delta$ to the multiplicative torus, we recover the usual tropicalisation map, but the base $N_\R$ is endowed with the fan structure arising from $\Delta$. The base also has an integral affine structure arising from the natural lattice embedding $\Z^n \subseteq \R^n$. 
\begin{defn}
	Let $X$ be an  algebraic variety over $K$ and $B$ a topological space. We say that a continuous map $\rho: X^\an \longrightarrow B$ is an \textit{affinoid torus fibration} at $b\in B$ if there exists an open neighbourhood $U$ of $b$ such that the restriction of $\rho^{-1}(U)$ fits into the commutative diagram 
	\begin{figure}[H]
	\begin{center}
		\begin{tikzcd}
		\rho^{-1}(U) \ar[r,"\sim"]\ar[d,"\rho"] &\trop^{-1}(V) \ar[d,"\trop"]\\
		U \ar[r,"\sim"] & V
	\end{tikzcd}
	\end{center}
	\end{figure}
	\noindent where the top horizontal arrow is an isomorphism of Berkovich analytic spaces, the bottom horizontal arrow is a homeomorphism and $V$ is an open subset of $N_\R \cong \R^n$.
	\label{affinoidtorus}
\end{defn}

\begin{rmk}
	Affinoid torus fibrations are the analogue of smooth real torus fibrations over $\C$. As suggested in \cite[Remark 3.3]{payne}, the tropicalisation map is a non-archimedean analogue of the moment map. 
	
\end{rmk}

Given a continuous map $\rho: X^\an \rightarrow B$, let $B^\sm$ denote the locus of points where $\rho$ is an affinoid torus fibration. Then there is an induced integral affine structure on $B^\sm$ given by the pullback of the integral affine structure on $\R^n$ via the charts given in Definition \ref{affinoidtorus}. There will be a useful alternate characterisation of the integral affine structure on $B^\sm$, due to Kontsevich-Soibelman \cite{Kontsevich2006}.  Let $U$ be a connected open subset of $B^\sm$ and $h$ an invertible function on $\rho^{-1}(U)$. Then by the maximum modulus principle, the modulus $|h|$ is constant on the fibres and descends to a continuous function on $B^\sm$. After sheafifying, we then have the sheaf 
\begin{align*}
	\Aff_{B^\sm,\Z}(U) = \{-\log|h| : h\in \Oc_{X^\an}^\times (\rho^{-1}(U))\}.
\end{align*}
In \S \ref{Kaff}, we will review this construction in more detail.

\begin{defn}
	Given an snc log model $(\Xc,\Hc)$ of $X$, 
	 we say that $\rho_{(\Xc,\Hc)}$ is a \textit{non-archimedean SYZ fibration} associated to $(\Xc,\Hc)$ if the discriminant locus $\Gamma := \Sigma_{(\Xc,\Hc)}\backslash \Sigma_{(\Xc,\Hc)}^\sm$ has codimension greater than or equal to 2 in $\Sigma_{(\Xc,\Hc)}$. 
	\label{nasyz}
\end{defn}


\section{Log Calabi-Yau surfaces}

\label{logcy}

\subsection{The geometry of log Calabi-Yau surfaces}
\label{lcy}

We recall the following definition given in the introduction:
\begin{defn}
	A \textit{Looijenga pair} over a field $F$ of characteristic 0 is a pair $(Y,D)$ where $Y$ is a smooth  projective surface over $F$ and $D \in |-K_Y|$ is a reduced nodal curve with at least one singular point.

\end{defn}

We will write $D= D_1+ ... + D_n$ where $D_1,...,D_n$ are the irreducible components of $D$. By our definition, $D$ is either an irreducible nodal curve or a cycle of rational curves, meaning that $D_i$ only intersects $D_{i\pm1}$ transversally where $i$ is taken modulo $n$. Let $\ell(D)$ denote the number of irreducible components of $D$. 

Let $U = Y\backslash D$. There is a global non-vanishing 2-form $\omega \in H^0(Y,K_Y)$ on $Y$ which has simple poles along the divisor $D$; this form is unique up to scaling. The $K$-variety $U$ is a \textit{log Calabi-Yau surface} with maximal boundary. For brevity, we will just refer to $U=Y\setminus D$ as a log Calabi-Yau surface.

\begin{eg}
	Let $Y$ be a smooth toric surface over $F$ and $D$ the toric boundary of $Y$. Then $(Y,D)$ is a Looijenga pair, which we will refer to as a \textit{toric Looijenga pair}. We can blow up any smooth point of $D$ to produce a new surface $\tilde{Y}$ and take the strict transform $\tilde{D}$ of $D$. The new pair $(\tilde{Y},\tilde{D})$ is also a Looijenga pair. 
    \label{simpleeg}
\end{eg}

\begin{rmk}
The algebraic closure of $K$ is given by the field of Puiseux series $\overline{K} = \C\{\{t\} \} = \bigcup_{n \geq 1 }\C((t^\frac{1}{n}))$. Throughout this paper, we will blow up points of $D$ which may be closed \ie $K'$-rational points where $K'$ is a finite extension of $K$. For this reason, we will make a finite field extension of $K$ of the form $K_n = \C((t^{\frac{1}{n}}))$ such that the points we are blowing up become rational. Since $K_n$ is isomorphic to $K$ as discrete valuation fields, given a Looijenga pair $(Y,D)$ we will fix such a finite extension of $K$ and abuse notation to denote these finite extensions also by $K$ and its ring of integers by $R$. By the birational geometry of surfaces over an algebraically closed field of characteristic 0,  we can further assume that  $Y$ is rational. 
\label{algfield}

 \end{rmk}
\begin{defn}
	A \textit{Looijenga pair over $R$} is a pair $(\Yc,\Dc)$ where $\Yc$ is a smooth projective rational surface over $R$ and $\Dc \in |-K_{\Yc/R}|$ is nodal. We say that $(\Yc,\Dc)$ has \textit{good reduction} if both the generic and special fibre are Looijenga pairs over $K$ and $k $ respectively. 
	\label{defLooijenga}
\end{defn}

\begin{conv}
	A Looijenga pair over $K$ will be denoted by $(Y,D) $ and a Looijenga pair over $R$ will be denoted by $(\Yc,\Dc)$. 
\end{conv}

In fact,  Example \ref{simpleeg}
entirely captures  the geometry of Looijenga pairs. To see this, we make the following definition:

\begin{defn}
	For a Looijenga pair $(Y,D)$ over $K$, a blowup at a node of $D$ is a \textit{toric blowup of $Y$}. A \textit{non-toric blowup} is a blowup of a smooth point of $D$. 
\end{defn}

\begin{defn}
	Let $(Y,D)$ be a Looijenga pair over $K$. An irreducible curve $E$ on $Y$ is an \textit{interior exceptional curve} if $E \cong \proj^1$, $E^2 = -1$ and $E \neq D_i$ for any $i$. 
\end{defn}

\begin{defn}
	Let $(Y,D)$ be a Looijenga pair over $K$.  A \textit{toric model} for $(Y,D)$ consists of a toric Looijenga pair $(\overline{Y},\overline{D})$ and a birational morphism $\pi : Y\rightarrow \bar{Y}$ such that $D\rightarrow\bar{D}$ is an isomorphism.
	\end{defn}
	
	In the above definition of toric model, we account for the case where smooth points that are blown up on $D$ are infinitely near \ie having blown up a point $p \in \overline{D}_i$, we then blow up the intersection of the new exceptional curve $E$ with the strict transform of $\overline{D}_i$ and so on. This leads to the following definition of generalised exceptional curve: 
	
	\begin{defn}
		A \textit{generalised exceptional curve} on $Y$ is a divisor of the form $E$ or $C_1 + ... + C_{k} + E$, where $k\geq 1$, $C_i \cong \proj^1$ is a smooth curve of self-intersection $-2$ disjoint from $D$ for $i \leq k-1$, $E$ is an interior exceptional curve with the relations $C_i\cdot C_j = 1$ if $j= i\pm 1 $ and $0$ otherwise, $E\cdot C_{k} = 1$  and $E\cdot C_i = 0 $ for $i \neq k$. 
	\end{defn}

	





\begin{prop}\cite[Proposition 1.3]{ghk}
	Let $(Y,D)$ be a Looijenga pair over $K$. After a finite extension  of $K$, there exists a Looijenga pair $(\tilde{Y},\tilde{D})$, a toric blowup of $({Y},{D})$, such that the pair $(\tilde{Y},\tilde{D})$ admits a toric model $(\bar{Y},\bar{D})$ \ie we have the following diagram
	\label{toricmodel}
	\begin{figure}[H]
	\centering
\begin{tikzcd}
&{(\tilde{Y},\tilde{D})} \arrow[ld,swap, "\text{non-toric blowups}"]\arrow[rd, "\text{toric blowups}"] &                         \\
         {(\overline{Y},\overline{D})} &    & {(Y,D)}                                                                                                           
\end{tikzcd}
	\end{figure}
\end{prop}

In light of Remark \ref{algfield}, the above proposition also holds over a finite extension of $K = \C((t))$. Indeed, a minimal smooth projective rational surface over an algebraically closed field of characteristic 0 is either $\proj^2$ or a ruled surface. When we drop the assumption that the field is algebraically closed, but assume it is still perfect, every minimal rational surface is isomorphic to  either a del Pezzo surface with Picard rank 1 or a conic bundle with Picard rank 2 by \cite[Theorem 3.3.1]{manin}.  Given a Looijenga pair $(Y,D)$, we can make a large enough field extension $K_n = \C((t^{\frac{1}{n}}))$ such that the minimal model of $Y$ over $K_n$ is either $\proj^2$ or a ruled surface. Then the proof in \cite{ghk} proceeds as usual. 

		The constructions that appear in this paper are independent of the choice of compactification of $U$ and thus will not be affected by toric blowups. For this reason, we make the following assumption. 
        
        \begin{assumption}
            We assume the necessary toric blowups have been performed such that $(Y,D)$ has a toric model. 
esf        \end{assumption}


\begin{defn}
	Let $(Y,D)$ be a Looijenga pair over $K$.   \begin{enumerate}
		\item A curve $C \subset Y$ is \textit{interior} if no irreducible component of $C$ is contained in $D$;
		\item An \textit{internal} $(-2)$-\textit{curve} is a smooth rational curve with self-intersection $-2$ which is disjoint from $D$; 
		\item A pair $(Y,D)$ is \textit{generic} if it has no internal $(-2)$-curves. 
	\end{enumerate}
\end{defn}

By Proposition 4.1 in \cite{gross_hacking_keel_2015}, any Looijenga pair is deformation equivalent to a generic pair. By adjunction, any smooth rational curve with negative self-intersection is either a $(-1)$-curve meeting $D$ transversally at a single point or a $(-2)$-curve. 

\begin{eg}
Let $(Y,D)$ be a Looijenga pair given by a finite number of non-toric blow ups of $(\overline{Y},\overline{D})$. Then $(Y,D)$ is generic if and only if the centres of the non-toric blowups on $(\overline{Y},\overline{D})$ are distinct. 
Furthermore, given a generalised exceptional curve $C_1 + ... + C_{k-1} + E$, the curves $C_i$ are internal $(-2)$-curves.

\end{eg}

\begin{defn}
	Let $(Y,D)$ be a generic Looijenga pair with toric model $(\overline{Y},\overline{D})$.  
	\begin{enumerate}
		\item An \textit{exceptional configuration} for $(Y,D)$ is a collection $\{E_{ij}\} \subset \Pic(Y)$ of classes of exceptional divisors arising from non-toric blowups of the toric model. Here, $E_{ij}$ are the exceptional divisors of the toric model meeting the divisor $D_i$. We shall refer to them as \textit{non-toric exceptional curves}.  
		\item Let $(Y,D)$ and $(Y',D')$ be two generic Looijenga pairs with the same toric model $(\overline{Y},\overline{D})$ with respective exceptional configurations $\{E_{ij}\}$ and $\{E'_{ij}\}$. We say that $(Y,D)$ and $(Y',D')$ have the same \textit{combinatorial type} if for each toric divisor $\overline{D}_i\subset \overline{D}$, the number of exceptional divisors of the toric model for $Y$ and $Y'$ with centre on $\overline{D}_i$ is the same. 
	\end{enumerate}
\end{defn}

\begin{eg}
	Let $({Y},{D})$ be a toric Looijenga pair over $k$ and consider the base change of the pair to $R$ which we denote $(\Yc,\Dc)$. Let $\mu_i \in \Dc_i\times_k K $ be a smooth $K$-rational point  and denote by  $(\Yc',\Dc')$ the Looijenga pair over $R$ given by the blowup at the closure of $\mu_i $ in $\Yc$.  There are two cases to consider: 
\begin{enumerate}[label = (\roman*)]
	\item the specialisation of $\mu_i$ is a nodal point of the divisor $D$:  the special fibre will be another toric Looijenga pair given by a toric blowup of $(Y,D)$;
	\item the specialisation of $\mu_i$ is a smooth point of $D_i \subset D$: the special fibre will have the same combinatorial type as the generic fibre. This case will be the most important for our purposes. 
\end{enumerate}

\label{valuations}
\end{eg}

\begin{rmk}
	Unfortunately, since the special fibre in (i) is a toric Looijenga pair, the methods of this thesis will not be able to deal with this case. This explains the assumption we make in (\ref{assume}). 
\end{rmk}
\begin{defn}
	Let $(Y,D)$ be a Looijenga pair with $\ell(D)\geq 3$. An \textit{orientation} of $D$ is an orientation of the dual graph of $D$.  
\end{defn}

An orientation of $D$ determines and is determined by a labelling of the components up to cyclic permutation. In particular, the labelling above of the components of $D= \sum_i D_i$ determines an orientation. 

\begin{rmk}
	There is another interpretation of the orientation that we will use later. Let $\nu: \tilde{D} \longrightarrow D $ be the normalisation of $D$. Then given the singular points of $D$, which we denote by $D^{\text{sing}}$, we can consider the possible labellings of the set $\nu^{-1}(D^\text{sing})$ by $0$ and $\infty$ subject to the following conditions: 
	\begin{enumerate}[label=(\roman*)]
		\item if $q_1,q_2\in \nu^{-1}(D^\text{sing})$ and $\nu(q_1) = \nu(q_2)$ then exactly one of the $q_i$ is labelled by $0$ and the other $q_j$ is labelled by $\infty$;
		\item if $\tilde{D}_i = \nu^{-1}(D_i)$ for some $i$ and $q_1\neq q_2 \in \tilde{D}_i$ with $\nu(q_1),\nu(q_2) \in D^\text{sing}$, then exactly one of the $q_i$ is labelled by $0$ and the other $q_j$ is labelled by $\infty$.
	\end{enumerate}
	Thus fixing a singular point and a labelling of the normalisation of this point determines an orientation of $D$. 
\end{rmk}

Let $D_i^\circ$ denote the  smooth locus of $D_i \subset D$. 

\begin{lem}\cite[Lemma 1.6]{friedman2016geometry}
\label{gm}
	A choice of orientation of $D$ fixes a canonical isomorphism $\psi: \Pic^0(D)\cong \G_m$. Via this isomorphism, if $p,q \in D_i^\circ$ and we choose an affine coordinate on $D_i$ such that $p$ corresponds to $1 $ and $q$ corresponds to $\lambda$, then \begin{align*}
		\psi(\Oc_D(p-q)) = \lambda^{-1}.
	\end{align*}
\end{lem}

\begin{lem}\cite[Lemma 1.9]{friedman2016geometry}
	Let $\mathbf{d} = \sum_{i,j}(p_{ij}-q_{ij})$ be a divisor of multidegree $(0,\dots ,0)$ with $p_{ij},q_{ij} \in D_i^\circ$. Let $z_i$ be an affine coordinate on $D_i$ for each $i$. Then $\mathbf{d}$ is a principal divisor if and only if 
	\begin{align*}
		\prod_{i,j} \frac{z_i(p_{ij})}{z_{i}(q_{ij})} = 1.
	\end{align*}
	\label{principal}
\end{lem}

\subsection{Algebraic periods}
\label{analper}

In the introduction, we introduced the algebraic period map for a Looijenga pair $(Y,D)$.  In this section, we will recall the definition and briefly explain how the map in fact computes period integrals. 

\begin{defn}
	Given a Looijenga pair $(Y,D)$ over a field $F$ of characteristic 0,  we define 
	\begin{align*}
		D^\perp := \{ \alpha \in \Pic(Y) \ | \ \alpha \cdot [D_i] = 0 \text{ for all }i\}.
	\end{align*}
\end{defn}

\begin{defn}
\label{perioddefn}
	Given an orientation of $D$, define the \textit{algebraic period map} as  \begin{align*}
		\mathcal{P}_{(Y,D)} : D^\perp &\longrightarrow \Pic^0(D)\cong_\psi \G_m\\
	L &\longmapsto \psi(L|_D)
	\end{align*}
	where $\psi$ is the canonical isomorphism from Lemma \ref{gm}. 
\end{defn}





We briefly justify why $\mathcal{P}_{(Y,D)}$ is the same as the classical notion of period integral. Let $(Y,D)$ be a Looijenga pair over $\C$ with $\Omega$ a volume form on $Y \backslash D$ with simple poles along each irreducible component of $D$.  We choose $\Omega$ such that $$\int_\Gamma \Omega = \frac{1}{(2\pi i)^2},$$where $\Gamma$ is the homology class of a real 2-dimensional torus in $U$.  Given $\gamma \in D^\perp$, there exists a class $\tilde{\gamma} = \sum_k\pm [C_k]$ representing $\gamma$, where $C_k$ is a smooth curve in $Y$ meeting $D$ transversally at the smooth locus of $D$. Then a cycle $\gamma'$ can be constructed, which is homologous to $\tilde{\gamma}$ and is used to define the period map 
\begin{align*}
	\varphi_{(Y,D)} : D^\perp & \longrightarrow \G_m \\ 
	\gamma &\longmapsto \exp\left(\frac{1}{2\pi i} \int_{\gamma'} \Omega\right).
\end{align*}
Then Proposition 3.12 in \cite{friedman2016geometry} shows that these integrals are equal to the periods as defined in Definition \ref{perioddefn}. By \cite{carlson1985one}, the mixed Hodge structure of $U$ is classified by such integrals.
\subsection{Construction of the model and skeleton}
\label{modelgeneral}
In \S\ref{prelim}, we defined an snc log model of a variety and constructed the skeleton associated to it. In this section, given a generic Looijenga pair $(Y,D)$ over $K$ which arises from the generic fibre of a Looijenga pair $(\Yc,\Dc)$ over $R$ with good reduction (Definition \ref{defLooijenga}), we will describe several birational modifications. We will apply these to $(\Yc,\Dc)$ to explicitly construct an snc log model of $U = Y\backslash D$ that will be used in the definition of the non-archimedean period map. The constructions we describe are of a very special type in the sense that the homeomorphism type of the associated skeleton is fixed. We will also give a detailed description of the simplicial structure of the associated skeleton in this section.


\subsection{Birational modifications}
In light of Example \ref{valuations}, we make the following assumption. 

\begin{assumption}
	Given a Looijenga pair $(Y,D)$ over $K$, we will assume that the pair can be realised as a generic fibre of a Looijenga pair $(\Yc,\Dc)$ over $R$ with good reduction where $\Dc$ is the scheme-theoretic closure of $D$ inside $\Yc$.  Furthermore, we will assume the special fibre $(\Yc_k,\Dc_k)$ is a generic Looijenga pair over $k$ with the same combinatorial type as the generic fibre $(\Yc_K, \Dc_K)$. 
	\label{assume}
	
\end{assumption}

In concrete terms, the assumption guarantees that the centres of the non-toric blowups on the generic fibre are distinct and specialise to distinct smooth points in the boundary $\Dc_k$ of the special fibre $\Yc_k$.

\begin{setup}
	Let $(Y,D )$ be a generic Looijenga pair over $K$, with toric model $(\overline{Y},\overline{D} )$ satisfying Assumption \ref{assume}. By the assumption, there exists a Looijenga pair $(\Yc,\Dc)$ over $R$ with good reduction such that $(\Yc,\Dc)\times_R K = (Y,D)$. We further suppose there are $k_{i}$ non-toric blowups of distinct $K$-rational points with centre  on $\overline{D}_i$, which gives rise to the pair $(Y,D)$. Let $\Dc_i$ denote the scheme-theoretic closure of $D_i$ in $\Yc$. 
	\label{setup}
\end{setup}


We now discuss the birational modifications we will be using in this thesis to construct snc log models of $U = Y\backslash D$.   The first construction is the following: 
\begin{construct}
If the centre of a non-toric blowup is on $\overline{D}_i$, we  blow up the subvariety $\Dc_i\times_R k $ inside $\Yc$. Denote the blowup by $\Yc'$. More generally, given a Looijenga pair $(\tilde{\Yc},\tilde{\Dc})$ over $R$, let $\tilde{\Dc}_i$ denote the  scheme-theoretic closure of $D_i$ in $\tilde{\Yc}$. We then blow up $\tilde{\Dc}_i\times_R k$ in $\tilde{\Yc}$. 
\label{blow}
\end{construct}

We can alternatively describe this modification as a deformation to the normal cone of the boundary divisor $D_i$.  

The next result describes the exceptional divisor in $\Yc'$ produced by Construction \ref{blow}. We will deal with the more general case of  Construction \ref{blow} in Lemma \ref{exceptional2}.



\begin{lem}
Suppose we are in Setup \ref{setup}. 
	Let the self-intersection number of the divisor $D_i$ in $Y$ be $n_i$. Let $\Yc'$ denote the blowup of the subvariety $\Dc_i\times_R k$ in $\Yc$. Then the new irreducible component $Y_1^i$ produced in the special fibre is isomorphic to $\F_{n_i} = \proj(\Oc\oplus\Oc(n_i))$. 
	\label{operation1}
\end{lem}

\begin{proof}
	Let $C \subset \Yc_k$ denote the 1-dimensional subvariety given by $\Dc_i\times_R k $. The curve $C$ is a complete intersection of the special fibre $\Yc_k$ and the horizontal divisor $\Dc_i$. Thus the conormal bundle $\mathcal{N}_{C/\Yc}^\vee$ is equal to $\Oc_C(-\Yc_k) \oplus \Oc_C(-\Dc_i)$. Since $\Yc_k$ is principal and $\Dc_i \cdot C = n_i$, this establishes the result. 

\end{proof}

We next review the notion of a type I modification from \cite{kulikov} which we shall refer to just as a \textit{flop}. Further details can be found in \cite[\S4]{kulikov}. We illustrate the construction  in Figure \ref{typeI}. Let $\tilde{\Yc}$ be an snc model of $Y$.   Suppose we have two irreducible components $X, X'$  in the special fibre of $\tilde{\Yc}$ and let $C \subset X\cap X'$ be a double curve. Let $E$ be a curve in $X$ with $E\cong \proj^1$ and  $E^2 =-1$; further suppose that $E$ does not contain any triple points (meaning a point that is the intersection of three irreducible components), intersects $C$ at a single point $P$ \ie $(E\cdot C)_X = 1$ and does not intersect any other double curves. We blow up $E$ in $\tilde{\Yc}$ to produce an exceptional divisor $V$ isomorphic to  $\proj^1 \times \proj^1$  whose rulings are given by $E$ and $E'$ and which has multiplicity 1 in the special fibre.    We will prove this in Lemma \ref{exceptional2} in the case of interest to us. The normal bundle of $V$ in $\tilde{\Yc}'$ is $\Oc_V(V)$ and its restriction to the fibre $E$ has degree $-1$ since $\Oc_V(V) = \Oc_V(-E-E')$.  Hence we can blow down $V$  to $E'$ along $E$. Thus we have \textit{flopped} the curve $E$ on $X$ to the curve $E'$ on $X'$. 
We will give explicit equations for the contraction of the ruling in \S\ref{technical}

\begin{figure}[H]
	
\centering

\tikzset{every picture/.style={line width=0.75pt}} 

\begin{tikzpicture}[x=0.75pt,y=0.75pt,yscale=-1,xscale=1]

\draw [color={rgb, 255:red, 189; green, 16; blue, 224 }  ,draw opacity=1 ]   (128,212.5) -- (100.79,232.52) ;
\draw    (100,190) -- (22.79,245.52) ;
\draw    (156,235) -- (78.79,290.52) ;
\draw    (100,190) -- (156,235) ;
\draw    (99.79,116.52) -- (100,190) ;
\draw    (154.79,137.52) -- (156,235) ;
\draw [color={rgb, 255:red, 189; green, 16; blue, 224 }  ,draw opacity=1 ]   (298,97.5) -- (270.79,117.52) ;
\draw    (270,75) -- (192.79,130.52) ;
\draw    (326,120) -- (248.79,175.52) ;
\draw    (270,75) -- (326,120) ;
\draw    (269.79,1.52) -- (270,75) ;
\draw    (324.79,22.52) -- (326,120) ;
\draw [color={rgb, 255:red, 74; green, 144; blue, 226 }  ,draw opacity=1 ]   (269.79,82.52) -- (270.79,117.52) ;
\draw [color={rgb, 255:red, 189; green, 16; blue, 224 }  ,draw opacity=1 ]   (297,62.5) -- (269.79,82.52) ;
\draw [color={rgb, 255:red, 74; green, 144; blue, 226 }  ,draw opacity=1 ]   (297,62.5) -- (298,97.5) ;
\draw    (419,188) -- (341.79,243.52) ;
\draw    (475,233) -- (397.79,288.52) ;
\draw    (419,188) -- (475,233) ;
\draw    (418.79,114.52) -- (419,188) ;
\draw    (473.79,135.52) -- (475,233) ;
\draw [color={rgb, 255:red, 74; green, 144; blue, 226 }  ,draw opacity=1 ]   (446,175.5) -- (447,210.5) ;
\draw    (218.43,149.29) -- (170.96,179.67) ;
\draw [shift={(168.43,181.29)}, rotate = 327.38] [fill={rgb, 255:red, 0; green, 0; blue, 0 }  ][line width=0.08]  [draw opacity=0] (8.93,-4.29) -- (0,0) -- (8.93,4.29) -- cycle    ;
\draw    (369.85,178.75) -- (320.79,149.52) ;
\draw [shift={(372.43,180.29)}, rotate = 210.78] [fill={rgb, 255:red, 0; green, 0; blue, 0 }  ][line width=0.08]  [draw opacity=0] (8.93,-4.29) -- (0,0) -- (8.93,4.29) -- cycle    ;
\draw  [dash pattern={on 4.5pt off 4.5pt}]  (188.43,234.29) -- (330.43,234.29) ;
\draw [shift={(333.43,234.29)}, rotate = 180] [fill={rgb, 255:red, 0; green, 0; blue, 0 }  ][line width=0.08]  [draw opacity=0] (8.93,-4.29) -- (0,0) -- (8.93,4.29) -- cycle    ;
\draw [shift={(185.43,234.29)}, rotate = 0] [fill={rgb, 255:red, 0; green, 0; blue, 0 }  ][line width=0.08]  [draw opacity=0] (8.93,-4.29) -- (0,0) -- (8.93,4.29) -- cycle    ;

\draw (116.39,225.91) node [anchor=north west][inner sep=0.75pt]    {$E$};
\draw (286.39,110.91) node [anchor=north west][inner sep=0.75pt]    {$E$};
\draw (448,178.9) node [anchor=north west][inner sep=0.75pt]    {$E'$};
\draw (74,240.4) node [anchor=north west][inner sep=0.75pt]    {$X$};
\draw (121,144.4) node [anchor=north west][inner sep=0.75pt]    {$X'$};
\draw (240,125.4) node [anchor=north west][inner sep=0.75pt]    {$X$};
\draw (293,26.4) node [anchor=north west][inner sep=0.75pt]    {$X'$};
\draw (397,237.4) node [anchor=north west][inner sep=0.75pt]    {$X$};
\draw (439,139.4) node [anchor=north west][inner sep=0.75pt]    {$X'$};
\draw (299,65.9) node [anchor=north west][inner sep=0.75pt]    {$E'$};
\draw (97,198.4) node [anchor=north west][inner sep=0.75pt]    {$C$};
\draw (304.79,88.92) node [anchor=north west][inner sep=0.75pt]    {$C$};
\draw (416,198.4) node [anchor=north west][inner sep=0.75pt]    {$C$};
\draw (128,201.4) node [anchor=north west][inner sep=0.75pt]  [font=\scriptsize]  {$P$};
\draw (440,213.4) node [anchor=north west][inner sep=0.75pt]  [font=\scriptsize]  {$P$};
\draw (271.79,85.92) node [anchor=north west][inner sep=0.75pt]    {$V$};

\end{tikzpicture}

\label{typeI}
\caption{Type I modification from \cite{kulikov}}
\end{figure}
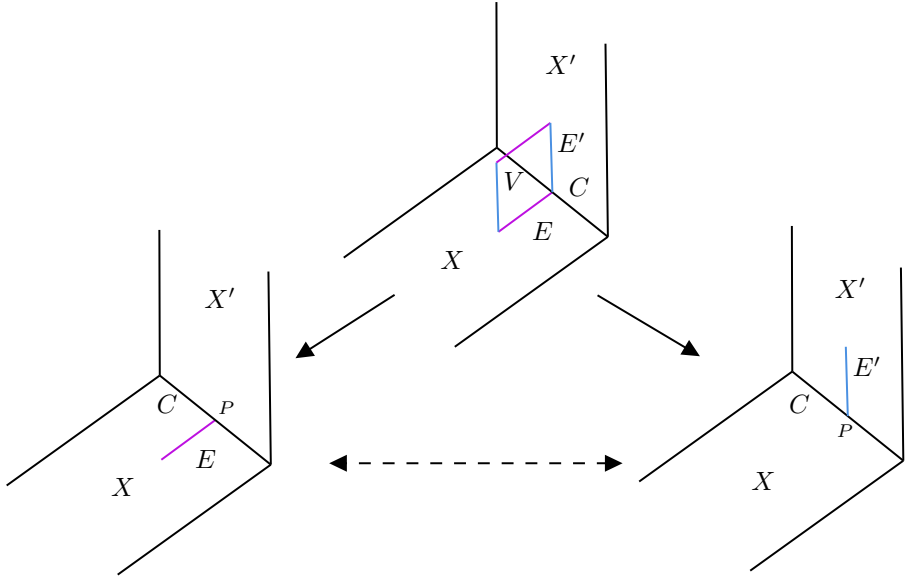

\begin{setup}
	Let $(Y,D)$ be a toric Looijenga pair where $Y$ is a ruled surface with $D_2$ and $D_4$ fibres of $Y$ under the ruling. Consider the non-toric blowup of $Y$ at a smooth point ${\mu}$ on either $D_1$ or $D_3$ and denote the exceptional curve by $E$. Let $C$ denote the strict transform of the fibre of $Y$ which passes through ${\mu}$. 
	\label{mirrorC}
	






\end{setup}

The next two constructions make use of the flop we described above. 
\begin{construct} 

\begin{enumerate}[label = (\roman*)]
	\item Suppose we have applied Construction \ref{blow} to the pair $(\Yc,\Dc)$.  Let $E$ be a non-toric exceptional curve in $Y$ whose centre is on $\overline{D}_i$. Then $E$ corresponds to an exceptional non-toric curve $E_k$ in $\Yc_k$ by assumption. We flop $E_k$ to the newly produced irreducible component $Y'$.  Denote the new Looijenga pair by $(\Yc',\Dc')$ where $\Dc'$ is the strict transform of $\Dc$. We let $\Dc_i'$ denote the strict transform  of $\Dc_i$ in $\Yc'$. 
	\item By Lemma \ref{operation1}, the new irreducible component is isomorphic to a Looijenga pair $(Y',D')$ over $k$ whose toric model is a ruled toric surface. Moreover, the centres of the non-toric blowups on $Y'$ are along an irreducible divisor in $D'$ and in particular we are in Setup \ref{mirrorC}. We first apply Construction \ref{blow} \ie blow up $\Dc_i'\times_R k $ in $\Yc'$. Then,   given a non-toric exceptional curve $E \subset Y'$, let $C$ be the strict transform of the fibre through the centre of $E$ as in Setup \ref{mirrorC}. We flop the curve $C$ to the new irreducible component $Y''$. Under this modification, $E$ becomes a fibre of $Y'$. 
\end{enumerate}

\label{flopblow}
\end{construct}

After applying Construction \ref{flopblow}, we produce a new pair $(\tilde{\Yc},\tilde{\Dc})$. We can apply Construction \ref{blow} to $\tilde{\Dc}_i$, the strict transform of $\Dc_i$, and proceed to flop the curves $C$ as in Construction \ref{flopblow} (ii) to this new irreducible component.   We thus consider the following setup: 


\begin{setup}

	Suppose $\pi : \tilde{\Yc} \rightarrow \Yc$ is a composition of blowups and flops described in Constructions \ref{blow} and \ref{flopblow} with $\tilde{\Yc}$  reduced.  Then $(\tilde{\Yc},\tilde{\Dc})$ is an snc log model for $U = Y\backslash D$ where $\tilde{\Dc}$ is the strict transform of $\Dc$ under $\pi$. Let $\tilde{\Dc}_i$ denote the strict transform of $\Dc_i$.
	\label{setupmodify}
\end{setup}

\begin{rmk}
	The modifications in Constructions \ref{blow} and  \ref{flopblow} are of a very special type in the sense that the birational morphism $f: (\tilde{\Yc},\tilde{\Dc}) \rightarrow (\Yc, \Dc)$ is log crepant \ie $f^*(K_{{\Yc}} + {\Dc}) = K_{\tilde{\Yc}} + \tilde{\Dc}$. Since $K_\Yc + 
\Dc \sim 0$, we will see in Proposition \ref{r2} that this fixes the homeomorphism type of the skeleton.
\end{rmk}

\begin{prop}
	Given a non-toric exceptional curve $E$ in $Y$, let $\mathcal{E}$ denote the scheme-theoretic closure of $E$ in $\Yc$. Then suppose we are in Setup \ref{setupmodify} and  $C$ is either
	\begin{enumerate}[label = (\roman*)]
		\item the intersection of the special fibre and  $\tilde{\Dc}_i$ for some $i$ with $k_i > 0$; 
		\item a non-toric exceptional curve in an irreducible component $X$ of $\tilde{\Yc}_k$ given by the intersection of the special fibre and the strict transform of $\mathcal{E}$. 
	\end{enumerate}
	Then the exceptional divisor of the blowup of $C$ inside $\tilde{\Yc}$ is isomorphic to
	\begin{enumerate}[label = (\roman*)]
		\item $\F_{|C\cdot \tilde{\Dc}_i|}$
		\item $\proj^1 \times \proj^1$
	\end{enumerate}
	respectively. 	\label{exceptional2}
\end{prop}

We note that (i) is a more general case of  Construction \ref{operation1}. 

\begin{proof}
	In case (i), the curve $C$ is a complete intersection of $\tilde{\Dc}_i$ and $\tilde{\Yc}_k$ and thus the conormal bundle is 
	$$\mathcal{N}_{C/\tilde{\Yc}}^\vee = \Oc_C(-\tilde{\Dc}_i) \oplus \Oc_C(-\tilde{\Yc}_k) = \Oc(-C\cdot \tilde{\Dc}_i)  \oplus \Oc(-C\cdot \tilde{\Yc}_k) = \Oc(-C\cdot \tilde{\Dc}_i) \oplus \Oc,$$ since $\tilde{\Yc}_k$ is a principal divisor.
	
	 In case (ii), the non-toric exceptional curve $C$ in $X$  is a complete intersection of $X$ and the horizontal divisor $\Ec$. Thus the conormal bundle $$\mathcal{N}_{C/\tilde{\Yc}}^\vee = \Oc(-(C\cdot X))\oplus \Oc(-(C\cdot \Ec))= \Oc(-1)\oplus \Oc(-1),$$ and the exceptional divisor of the blowup of $C$ inside $\tilde{\Yc}$ is isomorphic to $\proj^1 \times \proj^1$.  

\end{proof}

\subsection{The associated skeleton}
We now turn our attention to the skeleton associated to the snc pairs constructed above.   It is important to note there are a number of other modifications we could have performed.

If we only apply the modifications described above, then we can describe the skeleton completely. Lemma \ref{simplicial} gives a description of its simplicial structure. 

\begin{prop}
Suppose we are in Setup \ref{setupmodify}. Then the skeleton $\Sigma_{(\tilde{\Yc},\tilde{\Dc})}$ is homeomorphic to $\R^2$.
\label{r2} 	
\end{prop}
\begin{proof}
We have $K_{\tilde{\Yc}} + \tilde{\Dc} \sim 0$ since the flop we perform in Construction \ref{flopblow} has no effect on $K_{\tilde{\Yc}}$. If $\pi : \Yc' \rightarrow \Yc$ corresponds to blowing up $\Dc_i\times_Rk$ in $\Yc$ with exceptional divisor $X$,  then we have 
$K_{\Yc'} = \pi^*(K_\Yc) + X  = \Dc' -X + X = \Dc'$ where $\Dc'$ is the strict transform of $\Dc$. Repeating this argument for $\Dc_i'$ gives $K_{\tilde{\Yc}} + \tilde{\Dc} \sim 0$.

Since $K_Y + D \sim 0$ and $K_{\tilde{\Yc}} + \tilde{\Dc} \sim 0$,   by Proposition \ref{essential}, the skeleton $\Sigma_{(\tilde{\Yc},\tilde{\Dc})}$ is equal to the essential skeleton $\Sk(Y,D)$. Similarly $\Sigma_{(\Yc,\Dc)}$ is equal to $\Sk(Y,D)$ and thus it is enough to describe the homeomorphism type of $\Sigma_{(\Yc,\Dc)}$. 

We have that  $\Sigma_{(\Yc,\Dc)}$ is the cone of the dual graph of the boundary $D$ which can be `flattened'  to give a homeomorphism to $\R^2$. More precisely, for each node $p_{i,i+1} := D_i \cap D_{i+1}$ of $D$, we have a 2-dimensional cone $\sigma_{i,i+1}$ in $\Sigma_{(\Yc,\Dc)}$ generated by $v_i$ and $v_{i+1}$.  We then glue $\sigma_{i,i+1}$ to $\sigma_{i-1,i}$ along the ray $\rho_i := \R_{\geq 0}v_i$ to obtain the piecewise linear manifold $\Sigma_{(\Yc,\Dc)}$ which is homeomorphic to $\R^2$ and a decomposition $\{\sigma_{i,i+1}\}\cup \{\rho_i\} \cup \{0\}$ for $i$ running through the ordered set indexing the irreducible components of the boundary $D$.
	\end{proof}

\subsection{Construction of the model}

\begin{prop}
	Suppose we are in Setup \ref{setup}. Then there exists a reduced snc log model $(\tilde{\Yc},\tilde{\Dc})$ of $Y\backslash D$ such that one irreducible component of $\tilde{\Yc}_k$ is toric and all other irreducible components are a single non-toric blowup of a toric Looijenga pair $(X,\Delta_X)$.	\label{model}
\end{prop}

\begin{proof}

	We give an explicit construction of $\tilde{\Yc}$ here. Write  $b_i = D_i^2$ and suppose there are $k_i$ non-toric blowups along $D_i$ in $Y$. 
	
	
For each divisor $D_i$ with $k_i > 0$, we first apply Construction \ref{blow} \ie we blow up $\Dc_i\times_Rk$ in $\Yc$. We then apply Construction \ref{flopblow} (i) to flop each exceptional curve whose centre is a smooth point on $\overline{\Dc}_i\times_Rk$ to the newly produced irreducible component $Y_1^i$.  Apply Construction \ref{flopblow} (ii) to all but one of the curves which are given by the strict transform of a fibre through the centre of a non-toric blowup. Denote this snc log model of $U = Y\backslash D$ by $(\Yc',\Dc' )$ where $\Dc'$ is the strict transform of $\Dc$.   The new irreducible component produced by applying Construction \ref{blow} within  \ref{flopblow} (ii)  is denoted $Y_2^i$. 

Next we can run through Construction \ref{flopblow} (ii) again.  Indeed, let $\Dc'_i$ denote the strict transform of $\Dc_i$ and apply Construction \ref{blow} to $\Dc_i'\times_R k$. Denote the new component produced by $Y_{3}^i$. We then flop all but one of the curves which are given by the strict transform of a fibre through the centre of a non-toric blowup. 
More generally, we denote the new irreducible component produced by applying Construction \ref{blow} to the strict transform of $\Dc_i$ for the $j^\text{th}$ time by $Y_j^i$.  After applying Construction \ref{blow} for the $j^\text{th}$ time to the strict transform of $\Dc_i$, we apply Construction \ref{flopblow} (ii) and flop all but one of the curves which are given by the strict transform of a fibre through the centre of a non-toric blowup to the new irreducible component $Y_j^i$. 

This ensures we leave behind one non-toric exceptional curve on $Y_j^i$.  We can repeat this  until each irreducible component of the special fibre not equal to the strict transform of $Y$ is isomorphic to a single non-toric blowup of a toric Looijenga pair $(X,\Delta_X)$.  Recall $X$ is a ruled toric surface as described in Proposition \ref{exceptional2}. 


We can describe the irreducible components $Y^i_{j}$ by the classification of surfaces. Indeed, we have that in the special fibre, for each $i$, there will be a chain of irreducible components $$Y^i_{1} = \text{Bl}_{p_1}\F_{|b_i+k_i-1|},\ Y^i_{2} = \text{Bl}_{p_2}\F_{|b_i+k_i-2|},\ \ldots \ ,\  Y^i_{k_i} = \text{Bl}_{p_{k_i}}\F_{|b_i|}, $$
where $Y^i_{1}$ intersects $Y_0$ non-trivially, $Y^i_{j}$ only interests $Y^i_{j\pm1}$ and $p_i$ is a smooth point on the boundary of $\F_{|b_i+k_i-i|}$.  The resulting pair is an snc log model of $U = Y\backslash D$.

\end{proof}


\begin{eg}

 Assume we are in Setup \ref{setup} with $(Y,D)$ a non-toric del Pezzo surface of degree 5 whose combinatorial type is illustrated on the left hand side of the figure below. Then the special fibre of the model arising from Proposition \ref{model} is described on the right hand side of the figure below. 
	\begin{figure}[H]

\begin{center}
\tikzset{every picture/.style={line width=0.75pt}} 

\begin{tikzpicture}[x=0.75pt,y=0.75pt,yscale=-1,xscale=1]

\draw    (105,94) -- (205,194) ;
\draw    (206,160) -- (103,260) ;
\draw    (119,93) -- (118,261) ;
\draw [color={rgb, 255:red, 208; green, 2; blue, 27 }  ,draw opacity=1 ][fill={rgb, 255:red, 208; green, 2; blue, 27 }  ,fill opacity=1 ]   (184,128) -- (139,174) ;
\draw [color={rgb, 255:red, 208; green, 2; blue, 27 }  ,draw opacity=1 ][fill={rgb, 255:red, 208; green, 2; blue, 27 }  ,fill opacity=1 ]   (177.5,121) -- (132.5,167) ;
\draw [color={rgb, 255:red, 74; green, 144; blue, 226 }  ,draw opacity=1 ][fill={rgb, 255:red, 208; green, 2; blue, 27 }  ,fill opacity=1 ]   (190,236) -- (145.5,192) ;
\draw [color={rgb, 255:red, 184; green, 233; blue, 134 }  ,draw opacity=1 ][fill={rgb, 255:red, 208; green, 2; blue, 27 }  ,fill opacity=1 ]   (148,183) -- (86,183.67) ;
\draw    (286.1,136.29) -- (355.05,205.23) ;
\draw    (295.76,135.6) -- (295.07,251.42) ;
\draw [color={rgb, 255:red, 208; green, 2; blue, 27 }  ,draw opacity=1 ][fill={rgb, 255:red, 208; green, 2; blue, 27 }  ,fill opacity=1 ]   (390.9,110.09) -- (359.87,141.8) ;
\draw [color={rgb, 255:red, 208; green, 2; blue, 27 }  ,draw opacity=1 ][fill={rgb, 255:red, 208; green, 2; blue, 27 }  ,fill opacity=1 ]   (351.6,139.04) -- (320.58,170.76) ;
\draw [color={rgb, 255:red, 74; green, 144; blue, 226 }  ,draw opacity=1 ][fill={rgb, 255:red, 208; green, 2; blue, 27 }  ,fill opacity=1 ]   (358.49,240.39) -- (327.81,210.05) ;
\draw [color={rgb, 255:red, 184; green, 233; blue, 134 }  ,draw opacity=1 ][fill={rgb, 255:red, 208; green, 2; blue, 27 }  ,fill opacity=1 ]   (295.46,194.66) -- (253.7,194.66) ;
\draw   (330.03,112.05) -- (378.01,160.1) -- (343.93,194.12) -- (295.96,146.07) -- cycle ;
\draw   (364.11,78.03) -- (412.09,126.09) -- (378.01,160.1) -- (330.03,112.05) -- cycle ;
\draw    (357.8,180.41) -- (286.56,252.57) ;
\draw   (295.41,243.79) -- (343.78,194.35) -- (377,226.86) -- (328.63,276.3) -- cycle ;
\draw   (296.28,146.15) -- (295.41,243.79) -- (245.3,243.34) -- (246.17,145.71) -- cycle ;

\draw (208,199.4) node [anchor=north west][inner sep=0.75pt]    {$D_{1}$};
\draw (123,80.4) node [anchor=north west][inner sep=0.75pt]    {$D_{2}$};
\draw (83,260.4) node [anchor=north west][inner sep=0.75pt]    {$D_{3}$};
\draw (298.41,189.91) node [anchor=north west][inner sep=0.75pt]  [font=\tiny]  {$Y_{0} =\mathbb{P}^{2}$};
\draw (219.9,129.39) node [anchor=north west][inner sep=0.75pt]  [font=\footnotesize]  {$Y_{1}^{2} =\text{Bl}\mathbb{F}_{0}$};
\draw (393.68,146.58) node [anchor=north west][inner sep=0.75pt]  [font=\footnotesize]  {$Y_{2}^{1} =\text{Bl}\mathbb{F}_{1}$};
\draw (352.37,253.66) node [anchor=north west][inner sep=0.75pt]  [font=\footnotesize]  {$Y_{1}^{3} \ =\text{Bl}\mathbb{F}_{0}$};
\draw (359.8,183.81) node [anchor=north west][inner sep=0.75pt]  [font=\footnotesize]  {$Y_{1}^{1} =\text{Bl}\mathbb{F}_{0}$};
\end{tikzpicture}
\end{center}
	\caption{Model of non-toric dP5 using Proposition \ref{model}}

\end{figure}
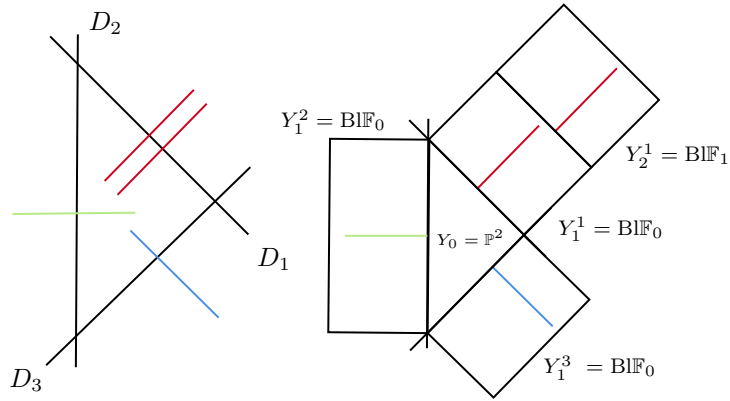
The skeleton of the new pair $(\tilde{
 \Yc},\tilde{\Dc})$ is illustrated in the figure below; the red crosses depict singularities in the integral affine structure which we discuss in \S\ref{Zaffine}.
\begin{figure}[H]
    \centering
\begin{tikzpicture}[x=0.75pt,y=0.75pt,yscale=1,xscale=1]
\draw[->]    (0,0) -- (-75,-75) ;
\draw[->] (0,0) --(0,100);
\draw[->] (0,0) -- (200,0);
\draw[->]    (75,0) -- (75,-75) ;
\draw[->] (75,0) --(175,100);
\draw[->]    (150,0) -- (150,-75) ;
\draw[->] (150,0) --(250,100);
\draw[->] (0,50) --(50,100);
\draw[->] (0,50) --(-75,50);
\draw[->] (-50,-50) --(-50,-75);
\draw[->] (-50,-50) --(-75,-50);
\node[cross,color=red, fill=red, inner sep=0pt,minimum size=5pt,label=below right:{}] (b) at (75,0) {};
\node[label=below right:{$v^1_{1}$}] (b) at (65,0) {};
\node[label=below right:{$v^1_{2}$}] (b) at (140,0) {};
\node[label=below right:{$v_{0}$}] (b) at (-10,0) {};
\node[cross,color=red, fill=black, inner sep=0pt,minimum size=5pt,label=below right:{}] (b) at (150,0) {};
\node[cross,color=red, fill=black, inner sep=0pt,minimum size=5pt,label=below right:{}] (b) at (-50,-50) {};
\node[cross,color=red, fill=black, inner sep=0pt,minimum size=5pt,label=below right:{}] (b) at (0,50) {};
\end{tikzpicture}
\end{figure}
\end{eg}

\begin{rmk}
Let $(\tilde{\Yc},\tilde{\Dc})$ be the snc log model of $U = Y\backslash D$ constructed in Proposition \ref{model}. Then there is a 1-1 correspondence between non-toric irreducible components of $\tilde{\Yc}_k$ and non-toric exceptional curves in $(Y,D)$. 	Let $E_j^i$ denote the non-toric exceptional curves with centre on $\overline{D}_i$ for $j=1,..., k_i$ and $\mathcal{E}_j^i$ the scheme-theoretic closure of $E_{j}^i$ in $\Yc$. Furthermore, suppose the strict transform of $\mathcal{E}_j^i$, when restricted to the special fibre, cuts out the non-toric exceptional curve on $Y_j^i$.   Then the correspondence is given by $
	Y_j^i\leftrightarrow E_j^i. $

\end{rmk}

\subsection{Construction of the fibration}
\label{constructfib}

In this section, we construct the fibration for a log Calabi-Yau variety in any dimension which has a Zariski dense algebraic torus. 

\begin{defn}
    An $n$-dimensional Looijenga pair $(Y,D)$ over $K$ is a pair where $Y$ is a smooth projective variety over $K$ and $D \in |-K_Y|$ such that $D$ has a 0-stratum. We will also assume there is a birational map $Y \dashrightarrow Y_\mathbf{t}$ where $Y_\mathbf{t}$ is a toric variety; this is a toric model of $Y$. 
\end{defn}

\begin{rmk}
    It is no longer true that a log Calabi-Yau variety has a toric model in higher dimensions; see \cite{hackingkeel} for an example. 
\end{rmk}

Let $(Y,D)$ be an $n$-dimensional Looijenga pair over $K$ with $(\Yc,\Dc)$ a reduced snc log model of $U = Y \backslash D$. By Proposition \ref{retract}, there is a retraction map $U^\an \longrightarrow \Sigma_{(\Yc,\Dc)}$. In this section, we prove that this map is a non-archimedean SYZ fibration (\ref{nasyz}) \ie an affinoid torus fibration away from $\Gamma$, the codimension greater than or equal to 2 strata of $\Sigma_{(\Yc,\Dc)}$.

Let $C$ be a 1-dimensional stratum of $\Yc_k$; note $C \cong \proj^1$.  Let $Y_1,...,Y_r$ denote the irreducible component of $\Yc+\Dc$ containing $C$. We set 
\begin{align*}
b_i &= \deg\Oc_C(-Y_i) = -(C\cdot Y_i). 
\end{align*}

Let $\Fc = \left(\Yc_k\cup \Dc\right)\backslash \left(\cup_iY_i\right)$ and $C^\circ = C \backslash \Fc$ . The space $C\backslash C^\circ$ consists of precisely two distinct points, which we denote by $c_0$ and $c_\infty$. Let $Y_0$ and $Y_\infty$ denote the respective prime divisors containing $c_0$ and $c_\infty$. Let $N_0, N_\infty$ and $N_i$ denote the respective multiplicities in $\Yc_k + \Dc$.  Since $\Yc_k$ is principal, we have the following equality:
\begin{align}
	N_0 + N_\infty = \sum_{i=1}^r b_i N_i.
	\label{equ}
\end{align}
  The following proposition appears in \cite[Proposition 5.4]{nicaise_xu_yu_2019} for the case when you have a vertical divisor. The proof method is the same.  We remark after the proposition how the positivity assumptions can always be satisfied.

\begin{prop}
Assume that $b_i> 0$ for every $i$. Then there exists a strictly semistable toric $R$-scheme $\Xc$ and a stratum $C'$ of $\Xc_k$ such that $\widehat{\Yc}_{/C} \cong \widehat{\Xc}_{/C'}$. 
\label{toricalong}
\end{prop}
\begin{proof}
	We will construct the scheme $\Xc$ directly. Let 
	\begin{align*}
		u_0 &= (1,0,0,0,..., 0, N_0) \in \Z^{r+1},\\
		u_1 &= (0,1,0,0,..., 0, N_1) \in \Z^{r+1}, \\
        \vdots\\
		u_{r-1} &= (0,0,0,0,...,1,N_{r-1}) \in \Z^{r+1}, \\ 
		u_\infty &= (-1,b_1,...,b_{r-1},N_\infty) \in \Z^{r+1}.
	\end{align*}
	
	Let $R_i$ be the ray spanned by $u_i$ in $\R^r\times \R_{\geq 0}$ and consider the cones $\sigma_0$ and $\sigma_\infty$ spanned by $\{R_1,...,R_r, R_0\}$ and $\{R_1,...,R_r,R_\infty\}$ respectively. Let $\Sigma$ be the fan in $\R^r\times \R_{\geq 0}$ with maximal cones $\sigma_0$ and $\sigma_\infty$.
	
	 Let $\Sigma_h $ denote the slice of $\Sigma$ at height $h \geq 0$. By \cite[\S7.7]{gubler}, the fan $\Sigma$ determines a toric $R$-scheme $\Xc$ whose generic fibre is isomorphic to the toric variety whose fan is $\Sigma_0$. 
	 By \cite[Proposition 7.11]{gubler}, the special fibre $\Xc_k$ is reduced. Moreover, since $\sigma_0 $ and $\sigma_\infty$ are simple cones, we have that $\Xc$ is regular over $R$ by \cite[\S4.3]{toroidal}. Together, this shows that $\Xc$ is strictly semistable. 
	
	The special fibre $\Xc_k$ is a simple normal crossings divisor and by  \cite[Lemma 7.10]{gubler}, the rays of the fan $\Sigma$ correspond to the prime divisors in $\Xc_k$ with multiplicities given by the last coordinate of the primitive generators of the rays. 
Thus we have the relation  \begin{align*}
		\Xc_k = N_0F_0 + N_\infty F_\infty + \sum_{i=1}^r N_iF_r,
	\end{align*}
	where $F_i$ are the irreducible components of $\Xc_k$ with multiplicity $N_i$. Let $C':= \cap_{i=1}^rF_i$ and let $d_0,d_\infty$ be the respective intersections of $C'$ with $F_0$ and $F_\infty$. By \cite[\S5.1]{fultontoric}, we have $C'\cdot F_j = -b_j$ for $j \in \{1,...,r\}$. 
	Consider the following line bundles \begin{align*}
	\Lc_i &= \Oc_{\Yc}(-Y_i - b_iY_\infty), \\ 
	\Lc_0 &= \Oc_{\Yc}(Y_\infty - Y_0),\\ 
	\Lc_\infty &= \Lc_0^{-1}.
\end{align*}
By construction, the restriction of each of these line bundles to $C\cong \proj^1$ is degree zero. Thus we can choose non-zero global sections $s_i, s_0,s_\infty$ from the line bundles $ \Lc_i|_C, \Lc_0|_C$ and $\Lc_\infty|_C$ respectively.  By the positivity assumption on the $b_i$, the conormal bundle $\mathcal{N}_{C/\Yc}^\vee = \oplus_{i=1}^r \Oc(-Y_i)$ is a direct sum of ample line bundles on $C$. In particular, the first cohomology group $H^1(\mathcal{N}_{C/\Yc}^\vee)$ vanishes and we have surjective maps 
\begin{align*}
	H^0((\Yc/C)_{n+1}, \Lc_i) \twoheadrightarrow H^0((\Yc/C)_{n}, \Lc_i)
\end{align*}
where $(\Yc/C)_{n}$ is the $n^\text{th}$ order infinitesimal thickening of $C$ in $\Yc$. The section $s_i$ lifts to all finite order thickenings by induction and thus also lifts to a global section of $\Oc_{\widehat{\Yc}/C}(-Y_i-b_iY_\infty)$. We will continue to denote the element by $s_i$. Similarly, we can lift $s_0$ and $s_\infty$ to the formal completion. Note $s_\infty = \frac{1}{s_0}$ and both $s_0$ and $s_\infty$ are nowhere vanishing global sections of $\Lc_0$ and $\Lc_\infty$ on $\widehat{\Yc}_{/C}$ respectively. 

We now construct the isomorphism $ f : \widehat{\Yc}_{/C} \rightarrow \widehat{\Xc}_{/C'}$ by describing it on local toric charts and showing they agree on the intersections. To do this, consider the following open formal subschemes:
\begin{align*}
	\mathscr{Y}_0 = \widehat{\Yc}_{/C}\backslash \{c_\infty\}, \	 \mathscr{Y}_\infty = \widehat{\Yc}_{/C}\backslash \{c_0\}, \ \mathscr{X}_0 = \widehat{\Xc}_{/C'}\backslash \{d_\infty\}, \ \mathscr{X}_\infty = \widehat{\Xc}_{/C'}\backslash \{d_0\}.
\end{align*}
By construction, since $s_i$ is a global section of $\Oc_{\widehat{\Yc}/C}(-Y_i-b_iY_\infty)$, we have that $s_i$ is a global equation for $Y_i$ in $\mathscr{Y}_0$. Similarly, $s_0$ is a global equation for $X_0$ in $\mathscr{Y}_0$. On $\mathscr{Y}_\infty$, we have that $s_is_\infty^{-b_i}$ defines $Y_i$ respectively whilst $s_\infty$ defines $X_\infty$. The element $\omega = ts_0^{-N_0}\prod_{i=1}^rs_i^{-N_i} \in \Oc_{\widehat{\Yc}_{/C}}$ defines an invertible regular function on $\widehat{\Yc}_{/C}$ by (\ref{equ}). \\
Let $\{u_0^\vee, u_1^\vee,...,u_r^\vee\}$ be the dual basis to $\{u_0,u_1,...,u_r\}$. Then we have
\begin{align}
	\mathscr{X}_0 = \Spf\ R\{\chi^{u_0^\vee}\}[[\chi^{u_1^\vee},...,\chi^{u_r^\vee}]]/\left(t - \chi^{N_0u_0^\vee + N_1u_1^\vee  + 
    \ldots + N_ru_r^\vee }\right).
	\label{formal1}
\end{align}
Choose integers $\alpha_{0},\alpha_1,...,\alpha_r$ such that $\alpha N_0 + \alpha_1 N_1 +\ldots + \alpha_r N_r = 1$ and define the morphism $f_0 : \mathscr{Y}_0 \rightarrow \mathscr{X}_0$ of formal $R$-schemes by the following morphism of topological $R$-algebras: 
\begin{align*}
	\Oc(\mathscr{X}_0) &\longrightarrow \Oc(\mathscr{Y}_0)\\
	\chi^{u_i^\vee} &\longrightarrow \omega^{\alpha_i}s_i.
\end{align*}
By employing the same argument as in \cite[Proposition 5.4]{nicaise_xu_yu_2019}, $f_0$ is an isomorphism.

We next consider the pair of formal toric charts $\mathscr{Y}_\infty$ and $ \mathscr{X}_\infty$. The lattice vectors $\{-u_0^\vee, u_1^\vee + b_1u_0^\vee,\ldots, u_r^\vee +b_ru_0^\vee\}$ are a dual basis to $\{u_\infty, u_1,..., u_r\}$ and we have 
\begin{align}
		\mathscr{X}_\infty = \Spf\ R\{\chi^{-u_0^\vee}\}[[\chi^{u_1^\vee +b_1u_0^\vee},\ldots, \chi^{u_r^\vee +b_ru_0^\vee}]]/\left(t - \chi^{N_0u_0^\vee + N_1u_1^\vee  +\ldots+ N_ru_r^\vee }\right).
	\label{formal2}
\end{align}
Define the morphism $f_\infty : \Oc(\mathscr{X}_\infty) \longrightarrow \Oc(\mathscr{Y}_\infty)$ as 
\begin{align*}
	\chi^{-u_0^\vee} &\longmapsto \omega^{-\alpha_0}s_\infty\\ 
	\chi^{u_i^\vee +b_iu_0^\vee} &\longmapsto \omega^{\alpha_i + b_i \alpha_0}s_is_\infty^{-b_i}.\\
\end{align*}By the same reasoning, $f_\infty$ is an isomorphism. On the intersections $\mathscr{Y}_{0,\infty} := \mathscr{Y}_0 \cap \mathscr{Y}_\infty$ and $\mathscr{X}_{0,\infty} := \mathscr{X}_0 \cap \mathscr{X}_\infty$, the maps $f_0$ and $f_\infty$ agree and thus they both glue to give an isomorphism of formal schemes 
\begin{align*}
	f: \widehat{\Yc}_{/C} \longrightarrow \widehat{\Xc}_{/C'}.
\end{align*}
\end{proof}

\begin{rmk}

In the statement of Proposition \ref{toricalong}, we made the assumption that $b_i >0$ for all $i$ to ensure that the algebraic sections $s_j$ lifted to formal  sections. By performing toric blowups at 0-dimensional strata of $\Yc$, we can always satisfy this positivity assumption. This may destroy the property that $\Yc_k$  is reduced but the special fibre of the model will still remain snc in the total space. 
\label{makepositive}

\end{rmk}



\begin{rmk}
	In the proof of Proposition \ref{toricalong}, we showed that the elements $s_0,s_1,...,s_r$ and $s_\infty$ are coordinates, satisfying some relations, on the preimage of $\Star(\tau_C)$ under the retraction map $\rho_{(\Yc,\Dc)}$. Indeed, by Example \ref{strataformal}, by
 taking analytic generic fibres of the open formal schemes (\ref{formal1}) and (\ref{formal2}), we see that the pair $(s_0,s_1,...,s_{r-1})$ form a pair of affinoid coordinates on $\rho^{-1}(\Star(\tau_C)) = ]C[ \subseteq U^\an $. 
\end{rmk}

 We can now prove that the associated retraction map is a non-archimedean SYZ fibration. 

\begin{thm}
	Let $(Y,D)$ be an $n$-dimensional Looijenga pair  and $(\Yc,\Dc)$ a reduced snc log model of $U = Y \backslash D$. Let $\Gamma$ be the set of codimension greater than or equal to 2 strata in $\Sigma_{(\Yc,\Dc)}$. Then the retraction map $\rho_{(\Yc,\Dc)}: U^\an \rightarrow \Sigma_{(\Yc,\Dc)}$ is an $n$-dimensional affinoid torus fibration over $\Sigma_{(\Yc,\Dc)}\backslash \Gamma$. In particular, $\rho_{(\Yc,\Dc)}$ is a non-archimedean SYZ fibration. 
	\label{nasyzaffinoid}
\end{thm}

\begin{proof}
	We first show that $\rho_{(\Yc,\Dc)}$ is an affinoid torus fibration over the top dimensional open faces of $\Sigma_{(\Yc,\Dc)}$. Let $\mathring{\sigma}$ be such an open face - this corresponds to a 0-dimensional stratum $y$ of the special fibre $\Yc_k$ \ie $\mathring{\sigma} = \Star(\tau_y)$. By Example \ref{strataformal}, we know $\rho_{(\Yc,\Dc)}^{-1}(\Star(\tau_y))$ is isomorphic to the analytic generic fibre of the formal scheme $\Spf\ \widehat{\Oc}_{\Yc,y}$. Since $\Yc_k$ is reduced and $(\Yc,\Dc)$ an snc pair, we have \begin{align*}
		\widehat{\Oc}_{\Yc,y} \cong R\llbracket y_0,...,y_n\rrbracket/(y_0...y_n -t).
	\end{align*}
	By Example \ref{toricfibration},  we can identify the restriction of $\rho_{(\Yc,\Dc)}$ over $\Star(\tau_y)$ with the restriction of the tropicalisation map 
	\begin{align*}
		\rho_{\G_m^{n,\an}} : \G_m^{n,\an} \longrightarrow \R^n
	\end{align*}
	over the n-dimensional open simplex 
	\begin{align*}
		\{(v_0,...,v_n) \in \R^n_{>0}| \ v_0 + ...+v_n=1\}.
	\end{align*}
	It follows that over $\Star(\tau_y)$, $\rho_{(\Yc,\Dc)}$ is an $n$-dimensional affinoid torus fibration. 
	
	It remains to extend the fibration over the  1-dimensional strata in $\Sigma_{(\Yc,\Dc)}$.  These each correspond to a 1-dimensional stratum $C$ of the special fibre $\Yc_k$. Let $\mathring{\tau}_C$ denote an open 1-dimensional stratum in $\Sigma_{(\Yc,\Dc)}$.  Using the notation of Proposition \ref{toricalong}, we can perform toric blowups at 0-dimensional strata to ensure that $b_i> 0$ for every $i$ and we can further assume $N_0,N_\infty >0$. By \cite[Proposition 3.1.7]{mustatanicaise}, this does not affect $\rho_{(\Yc,\Dc)}$. The effect on the skeleton $\Sigma_{(\Yc,\Dc)}$ is a sequence of star subdivisions of the top dimensional faces corresponding to the zero-dimensional strata that are blown up \cite[3.1.9]{mustatanicaise}. This does not affect  $\mathring{\tau}_C$ and, since $\rho_{(\Yc,\Dc)}$ is an $n$-dimensional affinoid torus fibration over top dimensional faces, we can assume  the positivity assumption is satisfied.

	Consider the subset  $\Star(\mathring{\tau}_C)$ - this is equal to the union of the open faces corresponding to the strata $c_0$, $c_\infty$ and $C$ in $\Yc_k$.   Let $\Xc$ be the toric $R$-scheme associated to the fan $\Sigma$ constructed in the proof of  Proposition \ref{toricalong} with the closure of a  1-dimensional torus orbit $C'$ corresponding to $\sigma_0 \cap \sigma_\infty$ and $\widehat{\Yc}_{/C}\cong \widehat{\Xc}_{/C'}$. 



	Now   let $V = \Sigma_1$ be the height 1 slice of $\Sigma$. Then the restriction of $\rho_{(\Yc,\Dc)}$ over $\Star(\mathring{\tau}_C)$ can be identified with the restriction of the $n$-dimensional affinoid torus fibration $\rho_{\G_m^{2,\an}}$ over $V$. Thus the fibration extends to an affinoid torus fibration over all codimension 1 strata of $\Sigma_{(\Yc,\Dc)}$. 
	
\end{proof}


%
%

\begin{defn}
	Let $(Y,D)$ be a 2 dimensional generic Looijenga pair satisfying Assumption \ref{assume}. When $(\Yc,\Dc)$ is the snc log model of $U$ constructed in Proposition \ref{model}, we denote the skeleton $\Sigma_{(\Yc,\Dc)}$ by $\Sk(U)$. 
\end{defn}
\subsection{Integral affine structure}
\label{Zaffine}
In this section, we return to the case of generic 2 dimensional Looijenga pairs satisfying Assumption \ref{assume}.  We will denote the snc log model of $U= Y \backslash D$ from Proposition \ref{model} by $(\Yc,\Dc)$. We use Proposition \ref{toricalong} to give an explicit description of the local toric charts. Whilst we will only carry out this computation for the model constructed in Proposition \ref{model}, the computation is similar for the general models considered in \S \ref{modelgeneral}.

We start by recalling how the integral affine structure arises from the affinoid torus fibration. Following  the notation of \S\ref{constructfib}, let $C = Y_w \cap Y_v$ be a 1-dimensional stratum of $\Yc_k$ and denote by $\tau_C$  the corresponding 1-dimensional stratum in $\Sk(U)$. The open star of $\tau_C$ consists of two maximal faces corresponding to the 0-dimensional stratum $c_0 = C \cap Y_0$ and $c_\infty = C\cap Y_\infty$ joined along ${\tau}_C$. 
Let $\Xc$ denote the local toric model over $R$ associated to the fan $\Sigma$ with $C'$ the corresponding 1-dimensional stratum constructed in the proof of Proposition \ref{toricalong} such that $\widehat{\Yc}_{/C} \cong \widehat{\Xc}_{/C'}$. 
Then by Example \ref{toricfibration} there is a commutative diagram 
\begin{figure}[H]
\begin{center}
	\begin{tikzcd}
{]C[} \ar[r,"\sim"]\ar[d,"\rho_{(\Yc,\Dc)}"] & {]C'[} \ar[d, "\trop"]\\ 
\Star(\tau_C)\ar[r,"\sim", "\phi"'] & \Sigma_1 
\end{tikzcd}
\end{center}
\end{figure}
\noindent where the upper arrow is an isomorphism of $K$-analytic spaces and the lower one a homeomorphism. The $K$-analytic spaces $]C[$ and $]C'[$ are the analytic generic fibres of $\widehat{\Yc}_{/C}$ and $\widehat{\Xc}_{/C'}$ respectively. By definition, the integral affine structure on $\Star(\tau_C)$ is the pullback of the integral affine structure on $\Sigma_1$.

 Let $v_X$ be a vertex of $\Sigma_{(\Yc,\Dc)}$ corresponding to the irreducible component $X$ in the special fibre $\Yc_k$ with boundary $\Delta_X =  \sum_{i=1}^k \Delta_{X,i}$. Let $(\Xc,\Delta_\Xc) = (X,\Delta_X) \times_k R$ denote the constant family with fibre $(X,\Delta_X$). By the proof of Proposition \ref{nasyzaffinoid}, the retraction map $(X\backslash \Delta_X)^\an \rightarrow \Sigma_{(\Xc,\Delta_{\Xc})}$ is also an affinoid torus fibration away from the vertex $O \in \Sigma_{\Xc,\Delta_\Xc}$. Thus the skeleton $\Sigma_{(\Xc,\Delta_{\Xc})}$ is an integral affine manifold away from $O$.

\begin{lem}
There is a canonical identification \begin{align*}
 		\varphi_v: \Star(v_X)  
 	 \xrightarrow{\ \ \sim \ \ } \Sigma_{(\Xc,\Delta_\Xc)}
 	\end{align*}
	of singular integral affine manifolds. 
 	\label{Kk}
 \end{lem}
 
 \begin{proof}
 	By construction, the polyhedral decompositions of $\Star(v)$ and $\Sigma_{(\Xc,\Delta_\Xc)}$ are the same. By the proof of Proposition \ref{toricalong} and the discussion above, the integral affine structures around $v$ and $O$ are both given by the intersection numbers of each of the 1-dimensional strata $C$ in the boundary of $X$ with the prime divisor not containing $C$ - by construction, these numbers match up and are given by the self intersection numbers in $X$. Thus $\Star(v_X)$ and $\Sigma_{(\Xc,\Delta_\Xc)}$ are canonically identified. 
 \end{proof}

By Lemma  \ref{Kk}, we have that the induced singular integral affine structure around $v_X$ induced by the non-archimedean SYZ fibration coincides with the singular integral affine structure on $\Sigma_{(\Xc,\Delta_\Xc)}$.  By \cite[Lemma 1.3]{ghk}, the integral affine structure on $\Sigma_{(\Xc,\Delta_{\Xc})}\backslash \{O\}$  extends across the origin if and only if $X$ is toric and $\Delta_X$ is the toric boundary. Thus we have the following corollary:

 \begin{cor}
 	Let $Y_0$ denote the toric irreducible component of $\Yc_k$. Then the integral affine structure extends across $v_{Y_0}$. We will denote the vertex $v_{Y_0}$ by $O$. 

 	\label{extendtoric}
 \end{cor}


Let $X$ denote an irreducible component of the special fibre $\Yc_k = X + \sum_{i=1}^rX_i$ and $v_X$ the corresponding vertex in $\Sk(U)$. Let $\Delta_X = \sum_{i=1}^r X_i \cap X + \Dc \cap X$.  By the description of $(\Yc,\Dc)$ in Proposition \ref{model}, we know $X$ is either toric or isomorphic to a non-toric blowup of $\F_n$. In the toric case, the integral structure will extend across the vertex by Corollary \ref{extendtoric}. In the latter case, we know $\Delta_X = \sum_{j=1}^4 C_j$. 
Let $\tau_{C_i}$ denote the corresponding 1-dimensional stratum in $\Sk(U)$. Since $\Sk(U)\subset \R^{I^\vb \cup I^\hb}$, let $v_{C_{i}}\in \Z^{I^\vb \cup I^\hb}$ denote the primitive generator of the ray defining $\tau_{C_i}$.

\begin{prop}
	The monodromy $T^\text{int}_{\rho_{(\Yc,\Dc)}}$ of the integral affine structure induced by $\rho_{(\Yc,\Dc)}$ around $v_X$ is 
	\begin{align*}
		T^{\text{int}}_{\rho_{(\Yc,\Dc)}} = \begin{pmatrix}
			1 & 1 \\
			0 & 1
		\end{pmatrix}
	\end{align*}
	with respect to the basis $(\chi^{v_{C_1}^\vee},\chi^{v_{C_2}^\vee})$. 

	\label{Zaffinecomp} 
\end{prop}

\begin{proof}
Let $b_i = -(C_i \cdot H) = -(C_i^2)_X$ where $H \neq X$ is the prime component of $\Yc$ containing $C_i$. Further assume the centre of the non-toric blowup is on $C_1$. 
The integral affine structure on $\Star(\tau_{C_i})$ identifies the set $(v_{C_{i-1}}, v_{C_i}, v_X, v_{C_{i+1}})$ with 
\begin{align*}
	\left(v_0= (1,0), v_1 = (0,1), v_2 = (0,0), v_\infty = (-1, b_i)\right),
\end{align*}
while the integral affine structure on $\Star(\tau_{C_{i+1}})$ identifies the set $(v_{C_{i}}, v_{C_{i+1}}, v_X, v_{C_{i+2}})$  with 
	\begin{align*}
	\left(v_0'= (1,0), v_1' = (0,1), v_2' = (0,0), v_\infty' = (-1, b_{i+1})\right). 
\end{align*}
Thus the transition map for the tangent bundle $\Lambda$ from the chart $\Star(\tau_{C_i})$ to the chart $\Star(\tau_{C_{i+1}})$ on the intersection $\Star(\tau_{C_i}) \cap \Star(\tau_{C_{i+1}})$ is given by the matrix
\begin{align*}
	\begin{pmatrix}
		b_i & 1 \\ 
		-1 & 0
	\end{pmatrix}
\end{align*}
with respect to the basis $(v_{C_{i-1}},v_{C_i})$. 
The transition function for the cotangent bundle $\check{\Lambda}$ is given by the same matrix with respect to the basis $(\chi^{v_{C_{i-1}}^\vee},\chi^{v_{C_i}^\vee})$.
The composition of these matrices then gives the monodromy. Indeed, take a loop around $v_X$ connecting $v_{C_1}, v_{C_2}, v_{C_3}, v_{C_4}$ and $v_{C_1}$ in that order. Then the monodromy around this loop with respect to the basis $(\chi^{v_{C_1}^\vee},\chi^{v_{C_2}^\vee})$ is 
\begin{align*}
\begin{pmatrix}
		b_1 & 1 \\ 
		-1 & 0
	\end{pmatrix}
\begin{pmatrix}
		b_4 & 1 \\ 
		-1 & 0
	\end{pmatrix}
\begin{pmatrix}
		b_3 & 1 \\ 
		-1 & 0
	\end{pmatrix}
	\begin{pmatrix}
		b_2 & 1 \\ 
		-1 & 0
	\end{pmatrix}.
\end{align*}
By our assumption on $C_1$ and the geometry of the model $(\Yc,\Dc)$,  we have $b_2 = b_4 = 0$ and $b_1 + b_3 =- 1$. Thus the composition of matrices above is equal  to $\begin{pmatrix}
	1 & 1\\ 
	0 & 1
\end{pmatrix}$. This completes the proof.
\end{proof}

\begin{notation}
	Given any simply connected subset $\tau \subset \Sk(U)^\sm$, we write $\check{\Lambda}_\tau$ for the stalk of $\check{\Lambda}$ on $\Sk(U)^\sm$ at any point of $\tau$; any two such stalks are canonically identified by parallel transport. 	Given an edge/ray of $\rho$ of $\Sk(U)$, we have $\check{\Lambda}_\rho \cong \Z$.
\end{notation}

\begin{cor}
 The invariant cotangent direction is parallel to $\check{\Lambda}_{\tau_{C_1}}$. 
	\label{invariant}
\end{cor}

The singular vertices of $\Sk(U)$ which correspond to the non-toric exceptional curves which have a centre on $\overline{D}_i$ can be viewed as lying on a ray $\rho_i$ supporting the origin $v_0$ (corresponding to the toric irreducible component) and the vertices $v^i_{1},...,v^i_{k_i}$.

The same computation as in Proposition \ref{Zaffinecomp} for the monodromy of the integral affine structure around a loop containing multiple singular vertices follows readily. Indeed, suppose $v_X = v^i_{j}$ and consider a loop around $v_X=v^i_{j}, v^i_{j+1},...,v^i_{k}$. Then the monodromy   is $$\begin{pmatrix}
	1 & k+1 \\ 
	0 & 1
\end{pmatrix}$$ with respect to the coordinates $(\chi^{v_{C_1}^\vee}, \chi^{v_{C_2}^\vee})$ described in Proposition \ref{Zaffinecomp}. Thus the invariant cotangent directions for the monodromy around $v^i_{j}$ and $v^i_{j+1}$ agree on $\Star(v^i_{j})$ and $\Star(v^i_{j+1})$.  Let $S= \cup_{j=1}^{k_i} \Star(v^i_{j})$. Then with Proposition \ref{Zaffinecomp}, we have that $\check{\Lambda}_{\tau_{C_1}} \cong \Z\langle \chi^{v_{C_1}^\vee}\rangle $ and $\check{\Lambda}|_{S} \cong \Z$. By parallel transport,  $\chi^{v_{C_1}^\vee}$ can be extended trivially to all of $S$.  We can then define    $\check{\Lambda}_{\rho_i} = \check{\Lambda}_{\tau_{C_1}}$ for any singular vertex on $\rho_i$. 





\begin{notation}
	Let $(\Yc,\Dc)$ denote the snc log model of $U = Y\backslash D$ constructed in Proposition \ref{model}.  From now on, let $\Gamma$ be the set of singular vertices corresponding to non-toric irreducible components of $\Yc_k$ and let $\Sk(U)^\sm = \Sk(U) \backslash \Gamma$. In particular, $\Sk(U)^\sm$ now contains $v_0$ since the integral affine structure arising from $\rho_{(\Yc,\Dc)}$ extends across the toric vertex $v_0$. 
\end{notation}

\section{Tropical geometry}
\label{tropconst}

 Let $(Y,D)$ be a Looijenga pair. Recall the following definition from \S \ref{analper}:  \begin{align*}D^\perp := \{\alpha \in \Pic(Y) \ | \ \alpha \cdot [D_i] = 0 \ \text{for all } i\} \subseteq \Pic(Y).
	\end{align*}

\begin{defn}
	The charge $Q(Y,D)$ of a Looijenga pair $(Y,D)$ is \begin{align*}
		Q(Y,D) = 12-D^2 - r(D)
	\end{align*}
	where $r(D)$ is the number of irreducible components of $D$. 
\end{defn}

\begin{rmk}
	The charge is an invariant intrinsic to the log Calabi-Yau surface $U=Y\setminus D$ and does not depend on the compactification. When $(Y,D)$ is a Looijenga pair over $\C$,  we have the following equalities \cite{friedman2016geometry}: 
\begin{align*}
	Q(Y,D) = 2+b_2(Y) - r(D) = \chi_\text{top}(U),
\end{align*}
where $b_2$ is the second Betti number of $Y$ and $\chi_\text{top}$ denotes the topological Euler characteristic. 
\end{rmk}
\begin{lem}\cite[Lemma 2.7]{friedman2016geometry}
The charge satisfies $Q(Y,D) \geq 0 $ with equality if and only if $(Y,D)$ is toric. 	
\end{lem}
\begin{lem}\cite[Lemma 1.5]{friedman2016geometry}
The subspace $D^\perp$ is free of rank $Q(Y,D)-2+s$ where $s$ is the rank of the kernel of  
\[\bigoplus_i\Z[D_i] \longrightarrow \Pic(Y),\]
which maps $[D_i]$ to $[D_i]$.
\label{charge}
\end{lem}

\begin{proof}
Since $Y$ is a smooth rational surface, $\Pic(Y)$ is a lattice and thus the subspace $D^\perp\subseteq\Pic(Y)$ is free. The rank computation follows from the exact sequence 
	\begin{align*}
		0 \rightarrow \ker(f) \rightarrow \bigoplus \Q[D_i] \xrightarrow{f} H^2(Y,\Q) \rightarrow D^\perp \otimes_\Z \Q \rightarrow 0
	\end{align*}
\end{proof}

\begin{lem}
\label{rankDperp}
Let $(Y,D)$ be a Looijenga pair with toric model $(\bar{Y},\bar{D})$ such that the centre of the blowups happen on $k$ distinct components of $\bar{D}$. Then the rank, $s$, of the kernel of 
\[\oplus_i \mathbb{Z}[D_i]\rightarrow \Pic(Y)\]
is given by
\[ 
s=\begin{cases}
		 2,  &\text{ if } k\geq 3\\
	   0 \text{ or } 1 &\text{ if } k=1,2\\
         0 &\text{ if } k=0
	\end{cases}
\]
\end{lem}
\begin{proof}
The case $k=0$ is trivial. Let $\{\bar{D}_1,...,\bar{D}_n\}$ be the components of $\bar{D}$  in arbitrary order and $\{D_1,...,D_n\}$ be the components of $D$ such that $D_i$ is the strict transform of $\bar{D_i}$. Without loss of generality, suppose $\bar{D}_1,\bar{D}_2$ and $\bar{D}_3$ are blown up to obtain $(Y,D)$. Since $D_1,D_2$, and $D_3$ will all involve distinct exceptional curves, these are automatically linearly independent from the set $\{D_4,..,D_n\}$, which only involve the strict transforms of $\bar{D}_i$. If $\{D_4,..,D_n\}$ is a linearly dependent set, then there exists $\{c_i\}$ such that 
\[\sum_{i=4}^n c_i D_i =0\]
which implies that there exists a character $u$ such that $\langle u, v_i\rangle=c_i$ and $\langle u,v_j\rangle=0$ if $j\in \{1,2,3\}$. The only way this can be true is if $u=0$ which implies that $c_i=0$ for all $i$. Therefore, $s=0$.
\\\\
Next, suppose that only $\bar{D}_1$ is blown up to obtain $(Y,D)$. Then $D_1$ will automatically be linearly independent from $D_2,..,D_n$ and there wil be one relation between these coming from the one obtained from $\bar{D}_2,...,\bar{D}_n$. Therefore, $s=1$.\\\\
The last case is when $k=2$. Suppose $\bar{D}_1$ and $\bar{D}_2$ are blown up now. There are two cases two consider. One where the corresponding rays, $v_i$ and $v_j$, in the toric fan form $(\bar{Y},\bar{D})$ are negative of each other and also when they are not. If they are, let $u$ be an element of the dual lattice that vanishes on $v_i$, which will then also vanish on $v_j$. Then the corresponding character gives
\begin{align*} 
\chi^u&=\sum_{k=0}^n \langle u,v_k\rangle D_i\\
&=\sum_{k=2}^n \langle u,v_k\rangle D_i\\
&= 0 
\end{align*}
which shows that the remaining $n-2$ rays do not correspond to a linearly independent set of divisors. Thus, we must have that $s=1$.\\\\
The case that $v_1\ne -v_2$ and showing that $D_2,..,D_n$ are linearly independent is exactly like the case of $k=3$, therefore $s=0$.
\end{proof}
From now on, $(Y,D)$ will be a generic Looijenga pair satisfying Assumption \ref{assume} and  
 $\Sk(U) = \Sigma_{(\Yc,\Dc)}$ where $(\Yc,\Dc)$ is the model constructed in Proposition \ref{model} and $\overline{\Sk}(U) = \overline{\Sigma}_{(\Yc,\Dc)}$. We fix some notation for the rest of this chapter. 

\begin{notation}

 Recall the skeleton comes equipped with a polyhedral decomposition and denote the toric vertex by $v_0$ of $\Sk(U)$ as $O$. 
Let $\rho_i$ denote the ray in $\Sk(U)$ emanating from $O$ which supports the singular vertices $v_j^i$ of $\Sk(U)$ corresponding to non-toric blowups with  centre  on $\overline{D}_i$ as described in the previous section.   We will use the convention that $1 \in \Z \cong \check{\Lambda}_{\rho_i}$ corresponds to the integral cotangent vector which pairs positively with the tangent vector in the direction of $\rho_i$ \ie pointing away from $O$.  
	
	Along $\rho_i$ in $\Sk(U)$, there is a natural ordering of the vertices $v^i_{j}$ given by the distance from $O$. We will write $v^i_{j}< v^i_{k}$ to denote that the distance of $v^i_{k}$ from $O$ is greater than the distance of $v^i_{j}$ from $O$. Note that $v^i_{j} < v^i_{k}$ if and only if $j<k$. 
Denote the (collection of) edge(s) connecting $O$ and $v^i_{j}$ in $\rho_i$ by $\tau_j^{i}$ and let $\tau_{j,j+1}^{i}$ be the edge connecting $v_j^i$ and $v_{j+1}^{i}$ in $\rho_i$.

	Let $V(\overline{\Sk}(U))$ be the set of vertices of $\overline{\Sk}(U)$ and $E(\Sk(U))$ the set of finite edges in $\Sk(U)$. Given a vertex $v\in V(\overline{\Sk}(U))$, let $[O,v]$ denote the edge/ray connecting $O$ and $v$ and let $\check{\mathbf{e}}_v \in \check{\Lambda}_{[O,v]}$ denote the primitive cotangent vector corresponding to $1$ under the isomorphism above.

\end{notation}
\begin{defn}
	A tropical 1-cycle is a twisted singular 1-cycle on $\Sk(U) \backslash \Gamma$ with coefficients in the sheaf of integral cotangent vectors $\check{\Lambda}$ on $\Sk(U)\backslash \Gamma$.
\end{defn}


More concretely, we can describe a tropical 1-cycle as an oriented  graph $G$ with a map $f: G \longrightarrow \Sk(U)\backslash \Gamma$ and a section $\check{\xi}_e \in (f_e)^*\check{\Lambda}$ for each edge $e\subset G$.  Furthermore, at each vertex $v$ of $G$, we have the  balancing condition$$\sum_{e \ni v}\epsilon_e \check{\xi}_e  =0$$ where $\epsilon_e = 1$ if $e$ is an incoming edge and $\epsilon_e = -1$ if $e$ is an outgoing edge. 
We will depict a tropical cycle (locally) as in Figure \ref{tropcyclepic1}, with each section $\check{\xi}_e$ drawn by taking the corresponding dual tangent vector $\xi_e \in \Lambda$.

\begin{lem}[\cite{lai2023mirror}]
If a tropical cycle goes around a focus-focus singularity on $\rho$ \ie given by the matrix $\begin{pmatrix} 1 & 1 \\ 0 & 1\end{pmatrix}$ with respect to a suitable oriented basis where $\check{\Lambda}_\rho$ is the invariant direction, as shown in Figure \ref{tropcyclepic1}, then the edge emanating from the singularity carries a cotangent vector $\zeta$ parallel to $\check{\Lambda}_\rho$. Moreover, $\zeta$ is primitive if and only if the cycle carries a primitive generator of $\check{\Lambda}/\check{\Lambda}_\rho$ when it starts to surround the singularity.  
\label{prim}
\end{lem}

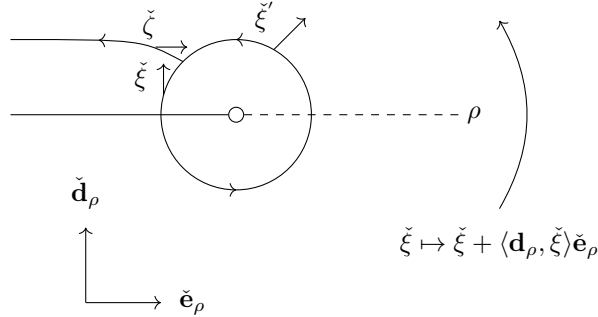
\begin{figure}[H]

\begin{center}
	\begin{tikzpicture}[circ/.style={shape=circle, inner sep=2pt, draw, node contents=}]

	\draw  node (x) at (0,0) [circ]; 
	\draw [ decoration={markings,  mark=at position 0.25 with {\arrow{>}}, mark=at position 0.75 with {\arrow{>}}},         postaction={decorate}](0,0) circle [radius=1cm]; 
 	\draw [dashed](x)-- (3,0); 
 	\draw (x) -- (-3,0); 
 	
	\node [label=right:$\check{\xi}^{'}$] at (90:1.3cm){}; 
 	\node [label=right:${\check{\xi}}$] at (-1.6,0.5){}; 
 	\node [label=right:$\check{\zeta}$] at (-1.5,1.2){}; 
 	
 	\draw [->] (140:1.4cm) -- ([xshift=12pt, yshift=0pt] 140:1.4cm); 
 	\draw [->] (60:1cm) -- ([xshift=12pt, yshift=12pt] 60:1cm); 
 	\draw [->] (165:1cm) -- ([xshift=0pt, yshift=12pt] 165:1cm); 
 	\draw  [decoration={markings, mark=at position 0.5 with {\arrow{>}}}, postaction={decorate}](135:1cm) .. coordinate [pos=.3] (a) controls (-1.25,1) .. ([xshift=-3cm, yshift=1cm] 0:0cm ); 
 	
  	\draw [->] (-2,-2.5) -- (-1,-2.5);   
 	\draw [->] (-2,-2.5) -- (-2,-1.5); 
	\node [label=above:$\check{\mathbf{d}}_{\rho}$] at (-2,-1.5){}; 
\node [label=right:$\check{\mathbf{e}}_{\rho}$] at (-1,-2.5){}; 
    \node [label=left:$\rho$] at (3.5,0){};

	\draw [<-] (3.5,1.25) [out=300, in=60] to (3.5,-1.25);

	\node [label=below:$\check{\xi} \mapsto \check{\xi}+  \langle {\mathbf{d}_{\rho}, \check{\xi}} \rangle  \check{\mathbf{e}}_{\rho}$ ] at (3.5,-1.25) {};
	
\end{tikzpicture} 
\end{center}
\caption{Tropical cycle surrounding a focus-focus singularity.}
\label{tropcyclepic1}
\end{figure}

\begin{proof}
	Let $\mathbf{e}_\rho$ be a generator of $\Lambda_\rho$ which points away from the origin $O$ and $\check{\mathbf{e}}_\rho\in \check{\Lambda}_\rho  $ the dual cotangent vector such that  $\langle \mathbf{e}_\rho, \check{\mathbf{e}}_\rho\rangle = 1$. Furthermore, let $\check{\Lambda}_\rho^\perp = \langle \mathbf{d}_\rho\rangle$ and $\check{\mathbf{d}}_\rho$ be the dual cotangent vector.
	
	As depicted in Figure \ref{tropcyclepic1}, let $\check{\xi}$ be the cotangent vector the tropical 1-cycle carries as it starts to go around the focus-focus singularity and $\check{\xi}'$ the vector after applying the monodromy transformation to $\check{\xi}$. By the balancing condition, we must have 
	\begin{align*}
		\check{\xi} + \check{\zeta} = \check{\xi}' = \check{\xi} + \langle \mathbf{d}_\rho, \check{\xi}\rangle \check{\mathbf{e}}_\rho
	\end{align*}
	which implies $\check{\zeta} = \langle \mathbf{d}_\rho, \check{\xi}\rangle \check{\mathbf{e}}_\rho$. Thus $\check{\zeta}$ is parallel to $\check{\Lambda}_\rho$ and it is a primitive generator of $\check{\Lambda}_\rho$ if and only if $\check{\xi}$ is a primitive generator of $\check{\Lambda}/\check{\Lambda}_\rho$. 
\end{proof}

\begin{rmk}
	Lemma \ref{prim} gives an explicit local description of the tropical cycles of interest to us. In particular, we will give a basis of $H_1(\Sk(U),\iota_*\check{\Lambda})$ that looks locally like Figure \ref{tropcyclepic1}. Note that the  decoration of the  loop around the focus-focus singularity is unique  once we fix the orientation of the edge emanating from the singularity, its decoration and the orientation of the loop. This is due to the balancing condition - for example, the figure below on the left satisfies the balancing condition but the one on the right is not balanced at the red cross.

\begin{figure}[H]
	\centering
	\begin{tikzpicture}[circ/.style={shape=circle, inner sep=2pt, draw, node contents=}]
\begin{scope}[xshift = -3.5 cm]

	\draw  node (x) at (0,0) [circ]; 
	\draw [ decoration={markings,  mark=at position 0.25 with {\arrow{>}}, mark=at position 0.75 with {\arrow{>}}},         postaction={decorate}](0,0) circle [radius=1cm]; 
 	\draw [dashed](x)-- (3,0); 
 	\draw (x) -- (-3,0); 
 	
	\node [label=right:$\check{\xi}^{'}$] at (90:1.3cm){}; 
 	\node [label=right:${\check{\xi}}$] at (-1.6,0.5){}; 
 	\node [label=right:$\check{\zeta}$] at (-1.5,1.2){}; 
 	
 	\draw [->] (140:1.4cm) -- ([xshift=12pt, yshift=0pt] 140:1.4cm); 
 	\draw [->] (60:1cm) -- ([xshift=12pt, yshift=12pt] 60:1cm); 
 	\draw [->] (165:1cm) -- ([xshift=0pt, yshift=12pt] 165:1cm); 
 	\draw  [decoration={markings, mark=at position 0.5 with {\arrow{>}}}, postaction={decorate}](135:1cm) .. coordinate [pos=.3] (a) controls (-1.25,1) .. ([xshift=-3cm, yshift=1cm] 0:0cm ); 
 	
  	\draw [->] (-2,-2.5) -- (-1,-2.5);   
 	\draw [->] (-2,-2.5) -- (-2,-1.5); 
	\node [label=above:$\check{\mathbf{d}}_{\rho}$] at (-2,-1.5){}; 
\node [label=right:$\check{\mathbf{e}}_{\rho}$] at (-1,-2.5){}; 
    \node [label=left:$\rho$] at (3.5,0){};

	\draw [<-] (3.5,1.25) [out=300, in=60] to (3.5,-1.25);

	\node [label=below:$\check{\xi} \mapsto \check{\xi}+  \langle {\mathbf{d}_{\rho}, \check{\xi}} \rangle  \check{\mathbf{e}}_{\rho}$ ] at (3.5,-1.25) {};
	
\end{scope}
\begin{scope}[xshift = 3.5cm]
	\draw  node (x) at (0,0) [circ]; 
	\draw [ decoration={markings,  mark=at position 0.25 with {\arrow{>}}, mark=at position 0.75 with {\arrow{>}}},         postaction={decorate}](0,0) circle [radius=1cm]; 
 	\draw [dashed](x)-- (3,0); 
 	\draw (x) -- (-3,0); 
 	
%
 	\draw [->] (140:1.4cm) -- ([xshift=12pt, yshift=0pt] 140:1.4cm); 
 	
 	\draw [->] (-135:1cm) -- ([xshift=0pt, yshift=-12pt] (-135:1cm); 
 	\draw [->] (-45:1cm) -- ([xshift=0pt, yshift=-12pt] -45:1cm); 
 	\draw [->] (45:1cm) -- ([xshift= -12pt, yshift=-12pt] (45:1cm); 
 	\draw [->] (145:1cm) -- ([xshift=0pt, yshift=-12pt] (145:1cm); 
 	\draw  [decoration={markings, mark=at position 0.5 with {\arrow{>}}}, postaction={decorate}](135:1cm) .. coordinate [pos=.3] (a) controls (-1.25,1) .. ([xshift=-3cm, yshift=1cm] 0:0cm ); 
  \node[cross, red, inner sep=0pt, minimum size=3pt] (b) at (135:1cm) {}; 

%

%
%
\end{scope}
\end{tikzpicture} 

\end{figure}
We will discuss the case when there are multiple edges emanating from the loop in Remark \ref{multipleedge}. 
	\label{rmkorient}
\end{rmk}

We will now compute the rank of $H_1(\Sk(U),\iota_*\check{\Lambda})$. Here it is essential that the essential skeleton $\Sk(U)$ arises from the model $(\Yc,\Dc)$ from Proposition \ref{model} so that the rank of $H_1(\Sk(U),\iota_*\check{\Lambda})$ agrees with the rank of $D^\perp$. 

\begin{lem}
	We have the following equality:
	\begin{align*}
		\rank(H_1(\Sk(U)^\sm,\check{\Lambda}) = 2 \left(\sum_i k_i\right) -2 + \rank(H_0(\Sk(U)^\sm,\check{\Lambda})). 
	\end{align*}

\end{lem}
\begin{proof}
	By \cite[Proposition 2.5.4]{dimca}, we have $\chi(\Sk(U)^\sm,\check{\Lambda}) = \rank(\check{\Lambda})\cdot \chi(\Sk(U)^\sm)$. Rearranging and using the fact that $\Sk(U)^\sm$ is homeomorphic to $\R^2$ punctured at $\sum_i k_i$ distinct points, we have 
	\begin{align*}
		\rank \left(H_1(\Sk(U)^\sm, \check{\Lambda})\right) =  2\left(\sum_i k_i \right)-2  + \rank\left(H_0(\Sk(U)^\sm, \check{\Lambda})\right)
	\end{align*}
	as required. 
	\end{proof}
	
Consider the following short exact sequence of chain complexes 
\begin{align*}
	0 \rightarrow C_\bullet(\Gamma, \iota_*\check{\Lambda}) \rightarrow C_\bullet(\Sk(U),\iota_*\check{\Lambda}) \rightarrow C_\bullet(\Sk(U),\Gamma;\iota_*\check{\Lambda})\rightarrow 0 
\end{align*}
where $C_n(\Sk(U),\Gamma; \iota_*\check{\Lambda}) = C_n(\Sk(U), \iota_*\check{\Lambda}) /C_n(\Gamma, \iota_*\check{\Lambda})$. 
Taking the long exact sequence in homology gives the following exact sequence: 
\begin{align}
	\resizebox{.9\hsize}{!}{$0 \rightarrow H_1(\Sk(U), \iota_*\check{\Lambda}) \rightarrow H_1(\Sk(U),\Gamma;\iota_*\check{\Lambda}) \rightarrow H_0(\Gamma, \iota_*\check{\Lambda}) \rightarrow H_0(\Sk(U),\iota_*\check{\Lambda}) \rightarrow 0$}.
	\label{exacthomology}
\end{align}
Suppose that $\Sk(U)$ has exactly $q$ rays which have a singularity. Then by direct computation, we have that 
\begin{align*}
	\rank(H_1(\Sk(U),\Gamma; \iota_*\check{\Lambda})) = 
	\begin{cases}
		 \sum_i k_i + \sum_{i : \ k_i\neq 0}(k_i-1)  \ + (q-2),  &\text{ if } q\geq 3\\
		 \sum_i k_i + \sum_{i : \ k_i\neq 0}(k_i-1) + T_2   &\text{ if } q=1,2\\
         0 &\text{ if } q=0
	\end{cases}
\end{align*}
where $T_2 \in \{0,1\}$ is equal to the number of realised simple tropical spokes consisting of exactly two vertices. Rearranging gives 
\begin{align*}
	\rank\left(H_1(\Sk(U), \iota_* \check{\Lambda})\right)  & =\rank(H_1(\Sk(U),\Gamma; \iota_*\check{\Lambda}))  - \sum_ik_i   \\ & -\rank\left(\ker \left(H_0(\Gamma,\iota_*\check{\Lambda}\right)\rightarrow H_0(\Sk(U),\iota_*\check{\Lambda})\right).
\end{align*}

There are two cases to consider now: (i) if the singularities all lie on a line in the fan $\overline{\Sigma}$ of the toric model $\overline{Y}$, then $\rank(H_0(\Sk(U),\iota_*\check{\Lambda})) = 1$ \ie there is a global cotangent vector on $\Sk(U)$; (ii) if all the singularities do not lie on a line, then $\rank(H_0(\Sk(U),\iota_*\check{\Lambda})) = 0$. In both cases, we have 
\begin{align*}
	\rank\left(H_1(\Sk(U), \iota_* \check{\Lambda})\right)  = 
	\begin{cases}
		 (\sum_{i : \ k_i\neq 0} k_i)-2  &\text{ if } q\geq 3\\
		  \sum_{i : \ k_i\neq 0}(k_i-1)  + T_2 &\text{ if } q=1,2 \\
             0 &\text{ if } q=0
	\end{cases}
\end{align*}
Then by Lemma \ref{rankDperp}, this is equal to the rank of $D^{\perp}$. Therefore, we get the following lemma.

\begin{lem}
	We have the equality 
	\begin{align*}
		\rank(D^\perp) = \rank\left(H_1(\Sk(U), \iota_* \check{\Lambda})\right).
	\end{align*}
\end{lem}

Observe that the map $H_1(\Sk(U)^\sm,\check{\Lambda}) \rightarrow H_1(\Sk(U),\iota_*\check{\Lambda})_\text{tf}$ to the torsion-free component of $H_1(\Sk(U),\iota_*\check{\Lambda})$  is surjective and the kernel has rank $\sum_i k_i$. A generating set of the kernel is given by loops around each singularity decorated with the invariant primitive cotangent vector. 
\begin{rmk}
Instead of having a single emanating edge from a loop around a singular vertex as depicted in Figure \ref{tropcyclepic1}, consider the case where there are $k\geq 2$ emanating edges pointing away from the vertex. The figure below illustrates the case $k=3$.  Let each edge be decorated with the cotangent vector $\check{\mathbf{a}}_i$ with $\check{\mathbf{c}}$ the image of $\check{\mathbf{b}}$ under the monodromy transformation. 

\begin{figure}[H]
\begin{center}
\begin{tikzpicture}[circ/.style={shape=circle, inner sep=2pt, draw, node contents=}]

	\draw  node (x) at (0,0) [circ]; 
	\draw [ decoration={markings,  mark=at position 0.25 with {\arrow{>}}, mark=at position 0.75 with {\arrow{>}}},         postaction={decorate}](0,0) circle [radius=1cm]; 
 	\draw [dashed](x)-- (3,0); 
 	\draw (x) -- (-3,0); 
 	
 	\node [label=right:$\check{\mathbf{a}}_1$] at (-1.6,1.6){}; 
 	\node [label=right:$\check{\mathbf{a}}_2$] at (-1.8,1.1){}; 
 	\node [label=right:$\check{\mathbf{a}}_3$] at (-2,0.7){};

 	\node [label=right:$\check{\mathbf{b}}$] at (-1.6,0.2){}; 
 	\node [label=right:$\check{\mathbf{c}}$] at (0,1.1){}; 
 	\draw  [decoration={markings, mark=at position 0.5 with {\arrow{>}}}, postaction={decorate}](135:1cm) .. coordinate [pos=.3] (a) controls (-1.25,1) .. ([xshift=-3cm, yshift=1cm] 0:0cm );

 	\draw  [decoration={markings, mark=at position 0.5 with {\arrow{>}}}, postaction={decorate}](105:1cm) .. coordinate [pos=.3] (a) controls (-1.25,1.5) .. ([xshift=-3cm, yshift=1.5cm] 0:0cm ); 
 	
 	\draw  [decoration={markings, mark=at position 0.5 with {\arrow{>}}}, postaction={decorate}](160:1cm) .. coordinate [pos=.3] (a) controls (-1.25,0.5) .. ([xshift=-3cm, yshift=0.5cm] 0:0cm ); 
 	

%
%

\node [label=right:$\check{\mathbf{d}}_{1,2}$] at (-0.7,0.7){}; 
 \node [label=right:$\check{\mathbf{d}}_{2,3}$] at (-1,0.4){}; 
\end{tikzpicture} 	
\end{center}
\end{figure}

From the exact sequence (\ref{exacthomology}), we have the morphism $$H_1(\Sk(U),\iota_*\check{\Lambda}) \rightarrow H_1(\Sk(U),\Gamma;\iota_*\check{\Lambda}).$$
The image of such a tropical cycle under this morphism will have $k$ edges emanating from the singular vertex, and each will be decorated with the invariant cotangent vector with the relation $\sum_i \check{\mathbf{a}}_i = 0$.   We also have the balancing conditions 
\begin{align*}
	\check{\mathbf{a}}_1 + \check{\mathbf{c}} &=\check{\mathbf{d}}_{1,2}\\ 
	\check{\mathbf{a}}_i + \check{\mathbf{d}}_{i-1,i} &= \check{\mathbf{d}}_{i,i+1} \text{ for }  1 < i < k\\ 
	\check{\mathbf{a}}_k +\check{\mathbf{d}}_{k-1,k}  & = \check{\mathbf{b}}.
\end{align*}
This implies $\mathbf{b} = \mathbf{c}$ and in particular $\mathbf{b}$ is parallel to the invariant cotangent direction. In particular, a tropical 1-cycle can be locally depicted as having one emanating edge. 
\label{multipleedge}
\end{rmk}

\begin{rmk}
	As observed in \cite{ruddat2021homology}, $H_1(\Sk(U), \iota_*\check{\Lambda})$ may have torsion. For example, consider the Looijenga pair $(Y,D)$ with toric model given by $\F_2$ and non-toric blowups along both fibres, whose essential skeleton is depicted below. Since the invariant cotangent directions do not span $\check{\Lambda}_O$, a simple \v{C}ech homology computation shows that $H_1(\Sk(U), \iota_*\check{\Lambda}) \cong \Z/2\Z$.  The torsion elements of $H_1(\Sk(U),\iota_*\check{\Lambda})$ have no importance in our theory and we will only work with the torsion-free part. In the work of \cite{ruddatsiebert}, the construction of the mirror family depends on a choice of splitting of $H_1(\Sk(U),\iota_*\check{\Lambda}) \rightarrow H_1(\Sk(U),\iota_*\check{\Lambda})_\text{tf}$ by the torsion subgroup $H_1(\Sk(U),\iota_*\check{\Lambda})_\text{tors}$. 
	
\begin{figure}[H]

\begin{center}
\tikzset{every picture/.style={line width=0.75pt}} 

\begin{tikzpicture}


\node[cross, color=red, inner sep=0pt, minimum size=3pt] (b) at (-0.5,1) {}; 
\node[cross, color=red, inner sep=0pt, minimum size=3pt] (b) at (1,0) {}; 
\draw [->] (0,0) -- (2,0);
\draw [->] (0,0) -- (0,2);
\draw [->] (0,0) -- (-1,2);
\draw [->] (0,0) -- (0,-2);

\end{tikzpicture}
\end{center}
	\end{figure}
\end{rmk}

By the rank computation above, we see that the set of simple tropical spokes and simple tropical wings generate $H_1(\Sk(U),\iota_*\check{\Lambda})_\text{tf}$.

\subsection{Tropical spokes and wings}
\label{tropspokewing}
\begin{defn}

A \textit{simple tropical spoke}, $\Gamma$, is a tropical $1$-cycle whose underlying graph circles $n$ distinct singularities where each is the first singularity (closest to the origin) on a ray and an additional $n$-valent vertex which all the edges emanating from the circled singularities meet at. Furthermore, $\Gamma$, is not an integer multiple of another tropical $1$-cycle.
\end{defn}

Denote the group generated by simple tropical spokes under addition of $1$-cycles as \underline{TropSpoke}.

\begin{defn} A \textit{simple tropical wing} is a tropical $1$-cycle that goes around two adjacent singularities on the same ray such that the edges leaving each singularity are the same and are contained in a single $2$-cell. Furthermore, we say that a simple tropical wing is \textit{positively oriented} if the edge connecting the two loops is oriented towards the $i$ singularity and decorated with the invariant cotangent direction that points away from the origin, as depicted below. 

\begin{figure}[H]
    \begin{tikzpicture}[circ/.style={shape=circle, inner sep=2pt, draw, node contents=}]
	\begin{scope}[xshift = -4cm]
	\draw  [decoration={markings, mark=at position 0.5 with {\arrow{>}}}, postaction={decorate}](45:1cm) .. coordinate [pos=.3] (a) controls (1.25,1) .. ([xshift=2cm, yshift=1cm] 0:0cm); 
	\draw  node (y) at (0,0) [circ]; 
	\node [label=below:$v_{j+1}^{i}$] at (0,0){}; 
	\draw [ decoration={markings,  mark=at position 0.25 with {\arrow{>}}, mark=at position 0.75 with {\arrow{>}}},         postaction={decorate}](0,0) circle [radius=1cm]; 
 	\draw [dashed](0.1,0)-- (2,0); 
	\draw[->] (-0.1,0) -- (-3,0); 
 	
 	
 	\draw [->] (40:1.4cm) -- ([xshift=-12pt, yshift=0pt] 40:1.4cm); 
 	\draw [->] (-45:1cm) -- ([xshift=0pt, yshift=12pt] -45:1cm); 
 	\draw [->] (15:1cm) -- ([xshift=-12pt, yshift=12pt] 15:1cm); 
 	\draw [->] (135:1cm) -- ([xshift=0pt, yshift=12pt] 135:1cm); 
 	\draw [->] (-135:1cm) -- ([xshift=0pt, yshift=12pt] (-135:1cm); 
 	
 	\end{scope}
    \begin{scope}

    \draw  node (x) at (0,0) [circ]; 
	\node [label=below:$v_{j}^{i}$] at (0,0){}; 

	\draw [ decoration={markings,  mark=at position 0.25 with {\arrow{<}}, mark=at position 0.75 with {\arrow{<}}},         postaction={decorate}](0,0) circle [radius=1cm]; 
 	\draw [dashed](x) -- (-2,0); 
 	\draw (0.1,0)-- (3,0); 
 	\draw[dotted] (3,0) -- (3.5,0);
    \filldraw[black] (3.5,0) circle (1.5pt) node[anchor=west]{$O$};
 	
 	\draw [->] (138.5:1.25cm) -- ([xshift=-12pt, yshift=0pt] 138.5:1.25cm); 
 	\draw [->] (-135:1cm) -- ([xshift=0pt, yshift=12pt] -135:1cm); 
 	\draw [->] (-45:1cm) -- ([xshift=0pt, yshift=12pt] -45:1cm); 
 	\draw [->] (45:1cm) -- ([xshift=0pt, yshift=12pt] (45:1cm); 
 	\draw [->] (165:1cm) -- ([xshift=12pt, yshift=12pt] 165:1cm); 
 	\draw  [decoration={markings, mark=at position 0.5 with {\arrow{<}}}, postaction={decorate}](135:1cm) .. coordinate [pos=.3] (a) controls (-1.25,1) .. ([xshift=-2cm, yshift=1cm] 0:0cm );
    \end{scope}
    \end{tikzpicture}
\end{figure}
\end{defn}

We make the following convention for `local tropical cycles':
\begin{conv}
A counterclockwise loop around a singularity with trailing edge directed towards the origin and decorated with the invariant cotangent direction away from the origin is \textit{positively oriented}. 
\end{conv}

\begin{eg}

On the left of Figure \ref{orient}, we have a tropical cycle for the Looijenga pair arising from performing a single non-toric blowup on each boundary component of $\proj^2$. Under our correspondence, this tropical cycle corresponds to $ [E_1]+[E_2] + [E_3] -[L]$ where $E_i$ are the exceptional curves and $L$ is a representative of the pullback of the line class on $\proj^2$.  

On the right of the figure, we have a tropical cycle for the Looijenga pair with toric model $\F_1$ by performing a non-toric blowup along one of the fibres and the other two components. The tropical cycle corresponds to $-[C] +[E_2] - [E_1] - [E_3]$ where $C$ is the pullback of the $(-1)$-curve on $\F_1$ and $E_2$ is the exceptional curve with center on a fibre of $\F_1$. 

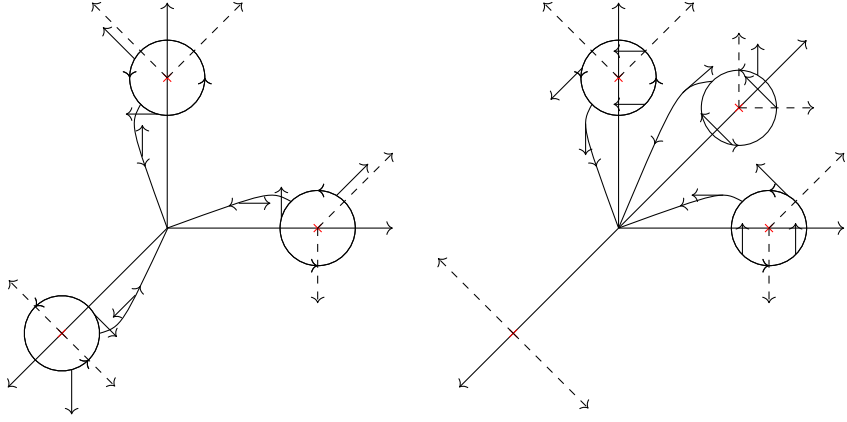
\begin{figure}[H]
\begin{center}
\begin{tikzpicture}

	\begin{scope}
			\draw[->](0,0) -- (3,0);
		\draw[->] (0,0) -- (0,3); 
		\draw[->] (0,0) -- (-2.12,-2.12);
		\node[cross, color=red, inner sep=0pt, minimum size=3pt] (b) at (2,0) {}; 
		\node[cross, color=red, inner sep=0pt, minimum size=3pt] (b) at (0,2) {}; 
		\node[cross, color=red, inner sep=0pt, minimum size=3pt] (b) at (-1.4,-1.4) {};
		

 	\begin{scope}[xshift = 2cm]
 	\draw  node at (0,0) {}; 

  	\draw[dashed,->] (0,0) -- (1,1);
  	\draw[dashed,->] (0,0) -- (0,-1);

\draw [ decoration={markings,  mark=at position 0.25 with {\arrow{>}}, mark=at position 0.75 with {\arrow{>}}},         postaction={decorate}](0,0) circle [radius=0.5cm];

	\draw [ decoration={markings,  mark=at position 0.25 with {\arrow{>}}, mark=at position 0.75 with {\arrow{>}}},         postaction={decorate}](0,0) circle [radius=0.5cm]; 

 	\draw [->] (162.5:1.1cm) -- ([xshift=12pt, yshift=0pt] 162.5:1.1cm); 
 	\draw [->] (60:0.5cm) -- ([xshift=12pt, yshift=12pt] 60:0.5cm); 
 	\draw [->] (165:0.5cm) -- ([xshift=0pt, yshift=12pt] 165:0.5cm); 
 	
      	\draw  [decoration={markings, mark=at position 0.5 with {\arrow{>}}}, postaction={decorate}](135:0.5cm) .. coordinate [pos=.3] (a) controls (-0.625,0.5) .. ([xshift=-2cm, yshift=0cm] 0:0cm );

	\end{scope}

	\begin{scope}[yshift = 2cm, rotate=90]
		\draw  node (x) at (0,0) {}; 
		\draw[dashed,->] (0,0) -- (1,1);
  		\draw[dashed,->] (0,0) -- (1,-1);
  	\draw [ decoration={markings,  mark=at position 0.25 with {\arrow{>}}, mark=at position 0.75 with {\arrow{>}}},         postaction={decorate}](0,0) circle [radius=0.5cm];

	\draw [ decoration={markings,  mark=at position 0.25 with {\arrow{>}}, mark=at position 0.75 with {\arrow{>}}},         postaction={decorate}](0,0) circle [radius=0.5cm]; 

 	\draw [->] (162.5:1.1cm) -- ([xshift=12pt, yshift=0pt] 162.5:1.1cm); 
 	\draw [->] (60:0.5cm) -- ([xshift=12pt, yshift=12pt] 60:0.5cm); 
 	\draw [->] (165:0.5cm) -- ([xshift=0pt, yshift=12pt] 165:0.5cm); 
 	
      	\draw  [decoration={markings, mark=at position 0.5 with {\arrow{>}}}, postaction={decorate}](135:0.5cm) .. coordinate [pos=.3] (a) controls (-0.625,0.5) .. ([xshift=-2cm, yshift=0cm] 0:0cm ); 
 	\end{scope}

	\begin{scope}[yshift = -1.4cm, xshift = -1.4cm, rotate=225]

	\draw[dashed,->] (0,0) -- (0,1);
	\draw[dashed,->] (0,0) -- (0,-1);
	\draw  node at (0,0) {}; 

\draw [ decoration={markings,  mark=at position 0.25 with {\arrow{>}}, mark=at position 0.75 with {\arrow{>}}},         postaction={decorate}](0,0) circle [radius=0.5cm];

	\draw [ decoration={markings,  mark=at position 0.25 with {\arrow{>}}, mark=at position 0.75 with {\arrow{>}}},         postaction={decorate}](0,0) circle [radius=0.5cm]; 

 	\draw [->] (162.5:1.1cm) -- ([xshift=12pt, yshift=0pt] 162.5:1.1cm); 
 	\draw [->] (60:0.5cm) -- ([xshift=12pt, yshift=12pt] 60:0.5cm); 
 	\draw [->] (165:0.5cm) -- ([xshift=0pt, yshift=12pt] 165:0.5cm); 
 	
      	\draw  [decoration={markings, mark=at position 0.5 with {\arrow{>}}}, postaction={decorate}](135:0.5cm) .. coordinate [pos=.3] (a) controls (-0.625,0.5) .. ([xshift=-2cm, yshift=0cm] 0:0cm );

	\end{scope}

	\end{scope}
	
	\begin{scope}[xshift = 6cm]
			\begin{scope}
			\draw[->](0,0) -- (3,0);
		\draw[->] (0,0) -- (0,3); 
		\draw[->] (0,0) -- (-2.12,-2.12);
		\draw[->] (0,0) -- (2.5,2.5);
		\node[cross, color=red, inner sep=0pt, minimum size=3pt] (b) at (1.6,1.6) {}; 

		\node[cross, color=red, inner sep=0pt, minimum size=3pt] (b) at (2,0) {}; 
		\node[cross, color=red, inner sep=0pt, minimum size=3pt] (b) at (0,2) {}; 
		\node[cross, color=red, inner sep=0pt, minimum size=3pt] (b) at (-1.4,-1.4) {};

 	\begin{scope}[xshift = 2cm]
 	\draw  node (x) at (0,0) {}; 
	\draw [ decoration={markings,  mark=at position 0.25 with {\arrow{>}}, mark=at position 0.75 with {\arrow{>}}},         postaction={decorate}](0,0) circle [radius=0.5cm]; 
  		\draw[dashed,->] (0,0) -- (1,1);
  		\draw[dashed,->] (0,0) -- (0,-1);
\draw [ decoration={markings,  mark=at position 0.25 with {\arrow{>}}, mark=at position 0.75 with {\arrow{>}}},         postaction={decorate}](0,0) circle [radius=0.5cm];
	
	\draw [->] (144.25:0.75cm) -- ([xshift=-12pt, yshift=0pt] (144.25:0.75cm); 
 	\draw [->] (-135:0.5cm) -- ([xshift=0pt, yshift=12pt] (-135:0.5cm); 
 	\draw [->] (-45:0.5cm) -- ([xshift=0pt, yshift=12pt] -45:0.5cm); 
 	\draw [->] (60:0.5cm) -- ([xshift=-12pt, yshift=12pt] 60:0.5cm); 
 	\draw  [decoration={markings, mark=at position 0.5 with {\arrow{>}}}, postaction={decorate}](135:0.5cm) .. coordinate [pos=.3] (a) controls (-0.625,0.5) .. ([xshift=-2cm, yshift=0cm] 0:0cm ); 	\end{scope}

	\begin{scope}[yshift = 2cm, rotate=90]
		\draw  node (x) at (0,0) {}; 
	\draw [ decoration={markings,  mark=at position 0.25 with {\arrow{>}}, mark=at position 0.75 with {\arrow{>}}},         postaction={decorate}](0,0) circle [radius=0.5cm]; 
	
		\draw[dashed,->] (0,0) -- (1,1);
  		\draw[dashed,->] (0,0) -- (1,-1);
   	\draw [ decoration={markings,  mark=at position 0.25 with {\arrow{>}}, mark=at position 0.75 with {\arrow{>}}},         postaction={decorate}](0,0) circle [radius=0.5cm];
	
	\draw [->] (144.25:0.75cm) -- ([xshift=-12pt, yshift=0pt] (144.25:0.75cm); 
 	\draw [->] (-135:0.5cm) -- ([xshift=0pt, yshift=12pt] (-135:0.5cm); 
 	\draw [->] (-45:0.5cm) -- ([xshift=0pt, yshift=12pt] -45:0.5cm); 
 	\draw [->] (75:0.5cm) -- ([xshift=-12pt, yshift=12pt] 75:0.5cm); 
 	\draw  [decoration={markings, mark=at position 0.5 with {\arrow{>}}}, postaction={decorate}](135:0.5cm) .. coordinate [pos=.3] (a) controls (-0.625,0.5) .. ([xshift=-2cm, yshift=0cm] 0:0cm );	\end{scope}

	\begin{scope}[yshift = 1.6cm, xshift = 1.6cm, rotate=0 ]
	\draw[dashed,->] (0,0) -- (1,0);
	\draw[dashed,->] (0,0) -- (0,1);
	
		\draw  node (x) at (0,0) {}; 
	\draw [ decoration={markings,  mark=at position 0.25 with {\arrow{>}}, mark=at position 0.75 with {\arrow{>}}},         postaction={decorate}](0,0) circle [radius=0.5cm]; 
  	
 	\draw [->] (165:0.775cm) -- ([xshift=12pt, yshift=12pt] 170:0.775cm); 
 	\draw [->] (0:0.5cm) -- ([xshift=-12pt, yshift=12pt] (0:0.5cm); 
	\draw [->] (-100:0.5cm) -- ([xshift=-12pt, yshift=12pt] (-100:0.5cm); 
 	\draw [->] (60:0.5cm) -- ([xshift=0pt, yshift=12pt] 60:0.5cm); 
 	\draw  [decoration={markings, mark=at position 0.5 with {\arrow{>}}}, postaction={decorate}](135:0.5cm) .. coordinate [pos=.3] (a) controls (-0.8,0.3) .. ([xshift=-1.6cm, yshift=-1.6cm] 0:0cm ); 
	\end{scope}

	\begin{scope}[xshift = -1.4cm, yshift = -1.4cm]
	\draw[dashed,->] (0,0) -- (-1,1);
	\draw[dashed,->] (0,0) -- (1,-1);
	\end{scope}
	\end{scope}
	\end{scope}
\end{tikzpicture}	
\end{center}

	\caption{Tropical cycles corresponding to tropical spokes.}

	\label{orient}

\end{figure}
\end{eg}

Denote the group generated by simple tropical wings under addition of $1$-cycles as \underline{TropWing}.




\begin{lem}
\label{lem:tropwing}
Let $k_i$ be the number of singularities on $\rho_i$. Then there is a group isomorphism 
	\begin{align*}
		\underline{\text{TropWing}} \longrightarrow \Z^{\sum_{k:r_k \neq 0}(r_i-1)}.
	\end{align*}
\end{lem}
\begin{proof}
This is clear from the definition of tropical wings. 
\end{proof}

\begin{lem}	
\label{primtropspinecount} 
	Suppose that $q\ge 3$ rays in $\Sk(U)$ have singularities. Then there are exactly $q-2$ simple tropical spokes. 
\end{lem}

\begin{proof}
	We have $\sum_i k_i = k$ and $ q+ \sum_{i: k_i \neq 0} (k_i-1) = k$. The condition of being a simple tropical spoke imposes $\sum_{i :k_i \neq 0}(k_i-1)$ independent relations and thus defines a codimension $\sum_{i :k_i \neq 0}(k_i-1)$ lattice of $\Z^{k-2}$. This defines a sublattice of $\Z^{k-2}$ of rank $k-2 - \sum_{i: k_i \neq 0} (k_i-1) = {q-2}$ whose generators are precisely the simple tropical spokes. 
\end{proof}

\begin{lem}
The set of simple tropical spokes and simple tropical wings form a basis for $H_1(\Sk(U),\iota_*\check{\Lambda})_\text{tf}$. 
\end{lem}

\begin{proof}
First, suppose that we are in the case where there are either $0$ or $1$ singularities, then there are no tropical spokes or wings which aligns with the fact that $H_1(\Sk(U),\iota_*\check{\Lambda})=0$. 

If there are $2$ singularities that are on the same ray, there is a single simple wing with no spokes. Likewise, if they are on different rays in which one ray is negative of the other, then there is a single simple tropical spoke with no wings. The other cases of $2$ singularities has $H_1(\Sk(U),\iota_*\check{\Lambda})=0$ and no tropical spokes or wings.

Now suppose there are $k\ge 3$ singularities spread across $q$ distinct rays. If $q\ge 3$, then by Lemma \ref{primtropspinecount}, there are $q-2$ simple tropical spokes and by Lemma \ref{lem:tropwing}, there are $\sum (k_i-1)$ simple tropical wings. In the case of $q=1$ or $q=2$, the only difference is that there are no tropical spokes.

We then to show that every $1$-cycle can be generated by simple tropical spokes and simple tropical wings. Let $\gamma$ be a tropical $1$-cycle. Suppose that it goes around singularities $\{x_{ij}\}$ with respective multiplicities $c_{ij}$. Then consider the cycle $\beta_{ij}$ that goes around singularities $x_{ij}$ and $x_{i1}$ with multiplicity $-c_{ij}$ and $c_{ij}$, respectively. Then consider $\gamma+\sum \beta_{ij}$. This will be a tropical $1$-cycle where the only singularities that it can go around are the first ones on every ray. Then we see that what remains must be a tropical spoke. 
\end{proof}

%
%

	  \subsection{Tropicalisation}
\label{tropicalisationsec}
 We now construct a homomorphism
\begin{align*}
	\Trop : D^\perp \longrightarrow H_1(\Sk(U), \iota_* \check{\Lambda})
\end{align*}
and show that it is an isomorphism. This is an analogue of the tropicalisation map in tropical and toric geometry. Note we can decompose Pic($Y$) as follows: 
\begin{align*}
	\Pic(Y) &= \Pic(\overline{Y})\oplus \bigoplus_{i,j}\Z[E_{j}^i]
\end{align*}
where $E_{j}^i$ is an exceptional divisor with centre on $D_i$ and $\Pic(\overline{Y})$ is the pullback of the Picard group of the toric model $\overline{Y}$. Furthermore, recall that  $E_j^i$ corresponds to the irreducible component $Y_j^i$.   Since $\Pic(\overline{Y})$ is generated by the $\overline{D}_i$ for $i=1,...,\ell(D)$,  we can write every $\alpha\in D^\perp$ as \begin{align*}
	\alpha = \alpha_{\textbf{t}} +\alpha_{\textbf{nt}}
\end{align*}
where $\alpha_{\textbf{t}} \in \bigoplus_i \Z[ D_i]$  and $\alpha_{\textbf{nt}} \in \bigoplus_{i,j}\Z[E_{j}^i]$. 

\begin{defn}
	A \textit{contact order} is a tuple $(\check{\lambda}_i)_{i \in I^\hb}$ with $\check{\lambda}_i \in \check{\Lambda}_{\rho_i}$. 
\end{defn}

\begin{eg}
	Let $(Y,D)$ be a toric Looijenga pair and $\beta$ a curve class on $Y$. Then the contact order associated to $\beta$ is $\left((\beta \cdot D_i)\check{\mathbf{e}}_i\right)_{i\in I^\hb}$ where $\check{\mathbf{e}}_i \in \check{\Lambda}_{\rho_i}$ is the primitive unit vector corresponding to $1$. 
\end{eg}

We also have the converse: given an element $\alpha_\mathbf{t}= \sum_i \beta_i D_i  \in \Pic(\overline{Y})$, the contact order completely determines it \cite[Theorem 2.1]{fultonintersecttoric}.  We thus define the tropicalisation $\Trop(\alpha_\mathbf{t})$  to the subset  of $\overline{\Sk}(U)$ consisting of the closure of the rays $\rho_i$ whenever $\alpha_\mathbf{t} \cdot D_i \neq 0$  equipped with the weight  $(\alpha_\mathbf{t} \cdot D_i) \check{\mathbf{e}}_i$.  
 By \cite[\S3]{fultontoric}, the divisor $\alpha_\mathbf{t}$ defines a continuous piecewise linear function $\phi_{\alpha_\mathbf{t}}$ on the support of the fan $\overline{\Sigma}$ of  $\overline{Y}$ given by $\phi_{\alpha_\mathbf{t}}(\mathbf{v}_i) = - \alpha_\mathbf{t}\cdot D_i$ and extended linearly, where  $\mathbf{v}_i$ is the primitive integral generator of the ray in $\overline{\Sigma}$ corresponding to $D_i$. Since $\phi_{\alpha_\mathbf{t}}$ is defined at the origin, we must have $\sum_i (-\alpha_\mathbf{t}\cdot D_i) \mathbf{v}_i = 0$. In particular, the tropicalisation is balanced at $O$ and thus defines an infinite tropical spoke.

We next describe the tropicalisation of a non-toric exceptional curve $E_j^i$ on $Y$.    Then the image of $(E_j^i)^\an$ under the non-archimedean SYZ fibration $\rho_{(\Yc,\Dc)}$  is the ray emanating from $v_j^{i}$ along $\rho_i$; we denote this ray by $r_{j}^{i}$. Then the tropicalisation of $\gamma E$ with $\gamma \in \Z$ is the ray $r_j^{i}$ equipped with the weight $\gamma \in \check{\Lambda}_{\rho_i}$. 
We record the following observation: 

\begin{lem}
	\label{welldef}
	Let $\alpha \in D^\perp$ and write $\alpha = \alpha_{\mathbf{t}} + \alpha_{\mathbf{nt}} = \sum_i \beta_i D_i + \sum_{i,j}\gamma_{j}^iE_{j}^i$ with $\alpha_\mathbf{t} \neq 0$. If there exists a $j$ such that $\gamma_{j}^i \neq 0$, then $\alpha_\mathbf{t}\cdot D_i \neq 0$. If $j$ is unique, then $\alpha_\mathbf{t} \cdot D_i = \gamma_{j}^i$. \end{lem}
\begin{proof}
Since $(\alpha_\mathbf{t} +\alpha_{\mathbf{nt}})\cdot D_i = 0$ and $\alpha_{\mathbf{nt}}\cdot D_i \neq 0$, we have $\alpha_{\mathbf{t}} \cdot D_i \neq 0.$
\end{proof}

Given $\alpha \in D^\perp$, we can always write $\alpha$ as a sum of elements  of $ D^\perp \cap \oplus_{r,s}\Z[E_{s}^r]$ and elements of the form $$ \sum_i \beta_i D_i + \sum_{i,j}\gamma_{j}^iE_{j}^i$$  such that for each $i$ there exists a unique $j$ such that $\gamma_{j}^i\neq 0$. Indeed, write $\alpha = \alpha_\mathbf{t} + \alpha_{\mathbf{nt}}$ and suppose there exist $j$ and $j'$ such that $\gamma^i_{j}, \gamma_{j'}^i \neq 0$. Then $\epsilon = E_{j}^i - E_{j'}^i \in D^\perp$ and we can run the same argument on $\alpha \pm \epsilon$ until we have the desired form. This leads to the following definition.   



\begin{defn}
    We call $\alpha \in D^{\perp}$ \textit{primitive} if $\alpha_{\mathbf{t}}\ne 0$ and $\alpha_{\mathbf{nt}}$ only has non-zero contributions from classes $E_1^j$. Then denote $D_S^{\perp}$ to be the subgroup of $D^{\perp}$ generated by primitive classes.
\end{defn}

\begin{defn}
Let $D^{\perp}_E$ be the subgroup of $D^{\perp}$ given by classes $\alpha$ such that $\alpha_{\textbf{nt}}=0$.
\end{defn}

\begin{lem}
$D^{\perp}_E$ is generated by elements of the form $E^i_{j+1}-E^i_{j}$.
\end{lem}

\begin{proof}
Let $\alpha\in D^{\perp}_E$. Then we can write
\[\alpha=\sum_{i,j}c_{ij} E_j^i.\]
Since $\alpha\in D^{\perp}$, we must have $\alpha\cdot D_i=0$ for all $i$. This gives that for each fixed $i$, $\sum_j c_{ij} = 0$. From here it is clear that we can write $\alpha$ as an integral linear combination of elements of the given form.
\end{proof}

\begin{lem}
We have $D^{\perp}\simeq D^{\perp}_S\oplus D^{\perp}_E$.
\end{lem}

\begin{proof}
Given $\alpha\in D^{\perp}$, we can write
\[ \alpha = \sum a_i L_i + \sum_k c_{1,k} E_1^k + \sum_{j} c_{2,j}E_2^j+\sum_{i>3,j} c_{i,j}E_i^j\]
Then consider
\[ \beta=\alpha +\sum (-c_{2,j}E_2^j+c_{2,j}E_1^j)+\sum_{i>3,j} (-c_{i,j} E_i^j+c_{i,j}E_1^j) \]
Then $\beta\in D_S^{\perp}$ and $\alpha-\beta\in D_E^{\perp}$ so we map $\alpha$ to $(\beta,\alpha-\beta)$ which gives the isomorphism.

\end{proof}

\begin{thm}(Tropical correspondence)
\label{tropcorr}
	There is an isomorphism of groups
	\begin{align*}
		T: D^\perp \longrightarrow H_1(\Sk(U),\iota_*\check{\Lambda}) _\text{tf}. 
	\end{align*}
\end{thm}

\begin{proof}
We have the following decompositions:
\[ D^{\perp}\simeq D^{\perp}_S\oplus D^{\perp}_E\]
\[H_1(\Sk(U),\iota_*\check{\Lambda}) \simeq \underline{\text{TropSpoke}}\oplus \underline{\text{TropWing}}\]

Now suppose we are given an element $\alpha\in D^{\perp}_S$ so we can write
\[ \alpha=\alpha_{\mathbf{t}}+\alpha_{\mathbf{nt}}.\]
We form isomorphisms
\[D_S^{\perp}\simeq \underline{\text{TropSpoke}} \hspace{1cm} D_E^{\perp}\simeq \underline{\text{TropWing}}\]
We know that $D^{\perp}_E$, is generated by $E_{j+1}^i-E_{j}^i$. We map this to the positively oriented tropical wing that circles around the $j+1$ and $j$-th singularity on $\rho_i$, which induces an isomorphism.\\\\
Now consider $D^{\perp}_S$ which is generated by primitive classes. Let $\beta$ be a primitive class. Then we associate to it a tropical spoke.
\begin{align*} 
\beta &= \beta_{\mathbf{t}} +\beta_{\mathbf{nt}}\\
&=\sum_i \beta_i D_i+\sum \gamma_j^iE_1^i
\end{align*}
Then we define $T(\beta)$ to be the $1$-cycle obtained by going around the first singularity on $\rho_i$ with the edge leaving the singularity labeled with $(\alpha_t\cdot D_i)\check{e}_i$ if $\alpha_t\cdot D_i\ne 0$ which is a simple tropical spoke.\\
In the other direction, given a simple tropical spoke, the vectors on the edges leaving each singularity define contact orders which defines a class $\beta_{\mathbf{t}}$. Then, we map it to the associated primitive class by adding the adequate exceptional curves as in Lemma \ref{welldef}. 
\end{proof}

\begin{rmk}
If $L = E_{j}^i- E_{k}^i$ with $k= j\pm 1$, then $L$ corresponds to a simple tropical wing with orientation equal to the sign of $j-k$. 
	Given $L \in D^\perp$, if $L = \alpha_{\mathbf{t}} + \sum_{i,j}\gamma_{j}^iE_{j}^i$ corresponds to a primitive tropical spoke, then the coefficient $\gamma_{j}^i$ gives the orientation $\text{sign}(\gamma_{j}^i)$ and multiplicity $|\gamma_j^i|$ around the corresponding singularity. 
	\label{orientmatch}
	\end{rmk}

\section{$K$-affine structures}
\label{Kaff}


 \subsection{Construction}
\label{constructKaff}
\begin{defn}
\label{Kaffdef}
Let $B$ be an integral affine manifold. Denote by $\Aff_\Z$  the sheaf of integral affine functions on $B$; this fits into the short exact sequence 
\begin{align*}
	0 \longrightarrow \R \longrightarrow \Aff_\Z \longrightarrow \check{\Lambda} \longrightarrow 0.
\end{align*}
A \textit{$K$-affine structure} on $B$ is an abelian sheaf $\Fc$ on $B$ which fits into the commutative diagram 
\begin{figure}[H]
\centering
	\begin{tikzcd}
		0 \ar[r] & K^\times \ar[r]\ar[d,"v_K"] & \Fc \ar[r]\ar[d] & \check{\Lambda} \ar[r]\ar[d, "\text{id}"] & 0 \\ 
		0 \ar[r] & \R \ar[r] & \Aff_\Z \ar[r] & \check{\Lambda} \ar[r] & 0
	\end{tikzcd}	
\end{figure}\noindent
where the rows are short exact sequences. 
\end{defn}

\begin{eg}
	Let $B= \R^n$ with the standard integral affine structure given by the embedding $\Z^n \subseteq \R^n$.  Then any $K$-affine structure on $B$ is isomorphic to $\check{\Lambda}\oplus K^\times$ since the short exact sequence above always splits. 
	\label{split}
\end{eg}

\begin{rmk}
A $K$-affine structure on $B$ is a $\GL_n(\Z)\ltimes (K^\times)^n$-torsor on $B$ such that applying $v_K^n$ to $(K^\times)^n$ gives the $\GL_n(\Z)\ltimes \R^n$-torsor associated to the integral affine structure on $B$. 
\end{rmk}
\begin{defn}
	To a $K$-affine structure $\Fc$, we can associate a monodromy representation 
\begin{align*}
		m :\pi_1(B, b) \longrightarrow \GL_n(\Z)\ltimes (K^\times)^n 
\end{align*}
for all $b \in B$ by composing the transition functions of $\Aff_K$ as we go around the loop. Post-composing with $v_K^n$ then recovers the monodromy representation associated to the integral affine structure $\Aff_\Z$. 
\label{monodromydef}
\end{defn}

As noted in \S\ref{affinoid}, an affinoid torus fibration induces an integral affine structure on the target. We review this construction below. It is sufficient to consider the case $X = \G_m^
n$ with $B = \R^n$. Then $\trop: X^\an \rightarrow B$ is an affinoid torus fibration trivially. 
 We first describe the case $X= \G_m^{n}$ and $B= \R^n$ with the affinoid torus fibration $\trop: X^\an \rightarrow B$.  Let $V\subset\R^n$ be a connected open set. In coordinates $(z_1,...,z_n)$ on $\trop^{-1}(V) \subset \G_m^{n,\an}$, every invertible analytic function $\varphi \in \Oc_{\G_m^{n,\an}}^\times(\trop^{-1}(V))$ can be written as a Laurent series 
\begin{align*}
	\varphi = \sum_{I \in \Z^n} c_Iz^I, c_I \in K
\end{align*}
such that for every $\underline{x} = (x_1,...,x_n) \in V$, $-\log|c_I| + \sum_{i=1}^n x_iI_i \rightarrow +\infty$ as $|I|\rightarrow \infty$.

\begin{defn}
Let $\varphi \in \Oc^\times_{\G_m^{n,\an}}(\trop^{-1}(V))$ and $x \in V$. By the maximum modulus principle, $|\varphi|$ is constant on the fibres of $\trop$.  Let $|\varphi(x)|$  denote the value of $|\varphi|$ on the fibre $\trop^{-1}(x)$ and define \begin{align*}
	\Val(\varphi)(x) := -\log|\varphi(x)| = \inf_I \left(-\log|c_I| + \sum_i x_iI_i\right). 
\end{align*}

\end{defn}

\begin{lem}
The map  $\Val: \trop_*(\Oc_{(\G_m^n)^\an}^\times) \rightarrow \Aff_\Z$ of sheaves on $\R^n$ satisfies the following properties: 
\begin{enumerate}[label = (\roman*)]
\item Let $a \in K^\times$. Then $\Val(a) = -\log|a|$.
	\item Let $V\subset \R^n$ be a connected open set and $\varphi\in \Oc_{({\G_m^n})^{\an}}^\times (\trop^{-1}(V))$.  There exists a unique index $I$, such that for all $x \in V$,
\begin{align*}
		\Val(\varphi) (x) = -\log|c_I| + \sum_ix_iI_i.
	\end{align*} 
	\item Given $\varphi_1,\varphi_2 \in \Oc_{({\G_m^n})^{\an}}^\times (\trop^{-1}(V))$, we have $$\Val(\varphi_1\varphi_2) = \Val(\varphi_1) + \Val(\varphi_2).$$
\end{enumerate}
\label{propval}
\end{lem}

\begin{proof}
	(i) and (iii) are immediate. 
For (ii), note that $\Val(\varphi)$ is a concave function since it is the infimum of affine functions. Thus $\Val(\varphi^{-1}) = - \Val(\varphi)$ is both convex and concave and in particular both $\Val(\varphi)$ and $\Val(\varphi^{-1})$ are affine. 
%
\end{proof}

\begin{eg}
	The integral affine structure on $\R^n$ can be then be described as the sheaf of functions of the form $\Val(\varphi)$ for $\varphi \in \trop_*(\Oc_{\G_m^{n,\an}}^\times)$.  
\end{eg}

Since $\Val(\varphi)$ is an affine function, we can make the following definition:

\begin{defn} Given $V \subseteq \R^n$ as above, let $\dVal(\varphi): V \rightarrow \check{\Lambda}$ be the derivative of $\Val(\varphi)$. This defines a morphism of sheaves $\dVal : 
\trop_*(\Oc_{\G_m^{n,\an}}^\times) \rightarrow \check{\Lambda}$. 
	
\end{defn}
More generally, suppose $X$ is an $n$-dimensional algebraic variety over $K$ and $\rho: X^\an \longrightarrow B$ an affinoid torus fibration  away from a  discriminant locus $\Gamma\subset B$ with $B^\sm = B \backslash \Gamma$.  Let $U\subset B^\sm$ be a connected open set which is homeomorphic to an open set of $\R^n$ and suppose the map $\rho$ locally looks like $\trop^{-1}(V) \subset \G_m^{n,\an}\longrightarrow V\subset B^\sm$. 
On $V$, the induced integral affine structure can then be described  as 
\begin{align*}
	\Aff_\Z(V) = \{\Val(\varphi) \ : \ \varphi \in \Oc_{X^\an}^\times (\rho^{-1}(V))\}. 
\end{align*}

Let $\Fc_1 = \ker(\Val)\subset \rho_*(\Oc_{X^\an}^\times)$; note $\Fc_1$ is precisely the sheaf of functions  which are zero in the induced integral affine structure on $B^\sm$ and is naturally an $\Oc_K^\times$-module. 
It can be alternatively described as $
	\Fc_1 = \{f \in \rho_*(\Oc_{X^\an}^\times) \ : \ |f| =1\}. 
$ 

 We then have the short exact sequence of sheaves on $B^\sm$
\begin{align*}
	0 \longrightarrow K^\times/\Oc_K^\times \longrightarrow \rho_*(\Oc_{X^\an}^\times)/\Fc_1 \xrightarrow{\dVal} \check{\Lambda} \rightarrow 0,
\end{align*}
where $K^\times/\Oc_K^\times$ embeds into $\rho_*(\Oc_{X^\an}^\times )/\Fc_1$ as constant functions.
Fix an analytic volume form on $X^\an$. More precisely, let $U$ be an analytic subspace of $X^\an$ such that $\rho^{-1}(B^\sm) \subseteq U$ and $\Omega \in \Gamma(U,\omega_X^{\otimes n})$ a top degree nowhere vanishing analytic volume form on $U$.
The analytic volume form $\Omega$ locally is given by the expression \begin{align}
		\Omega = \varphi(z_1,...,z_n) \bigwedge_{i=1}^n \frac{dz_i}{z_i},
		\label{omega}
\end{align}
where $(z_1,...,z_n)$ are local torus coordinates on $U^\an$  and $\varphi$ is an invertible function on $U$. 

\begin{defn}
	We  define $\Val(\Omega):= \Val(\varphi)$. By Lemma 2 in \cite{Kontsevich2006}, this is independent of a choice of coordinates.
\end{defn}


 As in \cite{Kontsevich2006}, we make the following assumption: 
 \begin{assumption}
 	$\Val(\Omega)$ is locally constant \ie $\dVal(\Omega) := \dVal(\varphi) =0$. 
 	\label{constantnorm}
 \end{assumption}
\begin{defn}
	Let $\Omega$ be a nowhere vanishing  analytic volume form on $X^\an$. Then define the \textit{residue} $\Res(\Omega) \in K$ of $\Omega$  to be the constant term of $\varphi$ defined in (\ref{omega}). 
\end{defn}

We make the following observation which will be needed in proving that the definition of residue is well defined.

\begin{lem}
	Let $\Omega = \varphi(z_1,...,z_n) \bigwedge_{i=1}^n \frac{dz_i}{z_i}$ with $\varphi = \sum_{I \in \Z^n}\varphi_I z^I$. Consider the volume form $$\Omega' = \Omega - c \bigwedge_{i=1}^n \frac{dz_i}{z_i}$$
	with $c\in K$. Then $\Omega'$ is exact if and only if $c = \varphi_{(0,...,0)}$. 
	\label{exact}
\end{lem}


\begin{lem}
	The residue is independent (up to sign) of the choice of local torus coordinates. 
	\label{indep}
\end{lem}

\begin{proof}
Let $(z_1,...z_n)$ and $(s_1,...,s_n)$ be two systems of local torus coordinates. Then for each $i$, we can write $$s_i = c_i z^{\mathbf{I}_i}(1+\alpha_i),$$
with $c_i \in K^\times$, $\mathbf{I}_i \in \Z^n$ and $\alpha_i = \alpha_i(z_1,...,z_n)$ such that: 
\begin{enumerate}[label = (\roman*)]
	\item the $n\times n$-matrix $ \mathbf{I} = (\mathbf{I}_1,...,\mathbf{I}_n) \in \GL_n(\Z)$ is unimodular;
	\item the function $\alpha_i$ has no degree zero term and $\Val(1+\alpha_i) = 0$.
\end{enumerate}
	It is easy to see that the scalars $c_i$ and the unimodular transformation $\mathbf{I}$ have no effect on the residue. Thus we can assume the coordinate transformation is of the form 
	\begin{align*}
		z_i \longmapsto s_i := z_i(1+\alpha_i).
	\end{align*}
Assume we can write the volume form $\Omega$ as 
\begin{align*}
	\Omega = \varphi(z_1,...,z_n) \bigwedge_{i=1}^n \frac{dz_i}{z_i} = \phi(s_1,...,s_n)\bigwedge \frac{ds_i}{s_i}
\end{align*}
with $\varphi = \sum_{I \in \Z^n}\varphi_I z^I$ and $\phi = \sum_{I \in \Z^n} \phi_I s^I$. By Lemma \ref{exact}, the volume form $\Omega - \varphi_{(0,...,0)} \bigwedge_{i=1}^n \frac{dz_i}{z_i}$ is exact. Define \begin{align*}
	u_i(z_1,\ldots,z_n) = \frac{s_i}{z_i} = 1+ \alpha_i.
\end{align*}
Since the dominant monomial is of degree 0, the (Laurent) series $\log (u_i)$ converges and the form $\frac{du_i}{u_i}$ is exact. Let $[n]$ denote the set  $\{1,...,n\}$. Then 
\begin{align*}
	\bigwedge_{i=1}^n \frac{ds_i}{s_i}  &= \bigwedge_{i=1}^n \frac{(1+\alpha_i)dz_i + z_i d\alpha_i}{z_i(1+\alpha_i)}\\ 
	&= \bigwedge_{i=1}^n \left(\frac{dz_i}{z_i} + \frac{du_j}{u_j}\right)= \sum_{I \cup J = [n], I \cap J = \emptyset}\bigwedge_{i \in I}\frac{dz_i}{z_i} \wedge \bigwedge_{j \in J}\frac{du_j}{u_j} \\ 
	&= \bigwedge_{i=1}^n \frac{dz_i}{z_i} + \sum_{\substack{I \cup J = [n], I \cap J = \emptyset \\ J \neq \emptyset}} \bigwedge_{i \in I}\frac{dz_i}{z_i} \wedge \bigwedge_{j \in J}\frac{du_j}{u_j} \\ 
	&= \bigwedge_{i=1}^n \frac{dz_i}{z_i} - \sum_{\substack{I \cup J = [n], I \cap J = \emptyset \\ J \neq \emptyset}} d\left(\log u_{j'}\bigwedge_{i \in I}\frac{dz_i}{z_i} \wedge \bigwedge_{j \in J\backslash {j'}}\frac{du_j}{u_j}\right).
\end{align*}
Thus the form $\bigwedge_{i=1}^n\frac{ds_i}{s_i} - \bigwedge_{i=1}^n \frac{dz_i}{z_i}$ is exact which implies that
\begin{align*}
	\Omega - \varphi_{(0,...,0)}\bigwedge_{i=1}^n \frac{ds_i}{s_i}
\end{align*}
is also exact. By another application of Lemma \ref{exact} using the coordinate system $(s_1,...,s_n)$, we have $\varphi_{(0,...,0)} = \phi_{(0,...,0)}$. This completes the proof. 
\end{proof}

\begin{lem}
	Let $\Omega=\varphi(z_1,...,z_n) \bigwedge_{i=1}^n \frac{dz_i}{z_i}$ be a non-vanishing analytic volume form on $X^\an$ such that $\Val(\Omega)$ is locally constant. Then $\Res(\Omega) \neq 0$. 
\end{lem} 
\begin{proof}
	Since $\Omega$ is non-vanishing, we have $\exp (-\Val(\Omega)) = \exp(-\Val(\varphi)) = |\varphi| = |\varphi_{(0,...,0)}|$ where in the last equality we use the constant norm assumption $\dVal(\Omega) = 0$. Since $\Omega$ is non-vanishing,  $\Res(\Omega) \neq 0$. 
\end{proof}

\begin{thm}\cite[Theorem 4]{Kontsevich2006}
	There exists a $K$-affine structure $\Aff_K$ on $B^\sm$ which is compatible with the integral affine structure $\Aff_\Z$ on $B^\sm$ \ie there is a diagram 
	\begin{center}
		\begin{tikzcd}
		0 \ar[r] & K^\times \ar[r]\ar[d,"v_K"] & \Aff_K \ar[r]\ar[d] & \check{\Lambda} \ar[r]\ar[d, "\text{id}"] & 0 \\ 
		0 \ar[r] & \R \ar[r] & \Aff_\Z \ar[r] & \check{\Lambda} \ar[r] & 0
	\end{tikzcd}

	\end{center}
	where the rows are exact. 
	
	\label{affkexp}
\end{thm}

\begin{proof}
We define the following multiplicative residue map
	\begin{align*}
		\Res_\Omega: \Fc_1 &\twoheadrightarrow\Oc_K^\times 
	\end{align*}
	where given an element $f \in \Fc_1$, after a choice of local torus coordinates, we can write $f = a(1+r)$ with $a \in \Oc_K^\times$ and $|r|<1$.  Then 
	\begin{align*}
		\Res_\Omega(f) = a \exp \left(\frac{\Res(\Omega\log(1+r))}{\Res(\Omega)}\right).
	\end{align*}
	Note $\Res_\Omega$ remains unchanged if we scale $\Omega$ to $c\Omega$ with $c \in K^\times$. 
	
\noindent \begin{claim}
	$\Res_\Omega$ is well defined \ie  is independent of the non-unique factorisation of $f$. 
\end{claim}
\begin{claimproof}
	Let $(z_1,...,z_n)$ and $(s_1,...,s_n)$ be two systems of local torus coordinates and suppose \begin{align*}
		a(1+r(z_1,...,z_n)) = b(1+ r'(s_1,...,s_n)).
	\end{align*}
Since $a,b \in \Oc_K^\times$, we can write $a = a'(1+\alpha)$ and $b = b'(1+\beta)$ where $a',b' \in \C^\times $ and $1+\alpha,1+\beta \in 1 + t\C[[t]]$. In particular, $\log(a(1+r))$ is well defined and given by $\log(a) + \log(1+\alpha) + \log(1+r)$ and similarly for $\log(b(1+r'))$. Then \begin{align*}
	\Res(\Omega\log(a(1+r))) = \Res(\Omega\log(b(1+r')))
\end{align*}
since $\Res$ is independent of the choice of local torus coordinates by Lemma \ref{indep}. We can write both sides of the equation as 
\begin{align*}
	&\log(a')\Res(\Omega) + \log(1+\alpha) \Res(\Omega) + \Res(\Omega\log(1+r))  \\&= \log(b')\Res(\Omega) + \log(1+\alpha) \Res(\Omega) + \Res(\Omega\log(1+r')).
\end{align*}
Dividing by $\Res(\Omega)\neq 0$ and exponentiating proves the claim. 
\end{claimproof}
		
		We return to the proof the proposition. We have a short exact sequence 
	\begin{align}
		0 \longrightarrow K^\times \longrightarrow \rho_*(\Oc_{X^\an}^\times)/\ker \Res_\Omega \longrightarrow \check{\Lambda} \longrightarrow 0
	\end{align}
	which defines a $K$-affine structure on $B^\sm$  compatible with the integral affine structure on $B^\sm$. Indeed, by construction, $\ker\Res_\Omega$ is a subsheaf of $\Fc_1$ whose elements restrict to 0 on $\Aff_\Z$ and we recover the usual integral affine structure by applying $\Val$ to $\rho_*(\Oc_{X^\an}^\times)/\ker \Res_\Omega$. 
\end{proof}

\begin{eg}
	As we noted in Example \ref{split}, the $K$-affine structure on $\R^n$ is split.   The splitting of the $K$-affine structure in Theorem \ref{affkexp} is given by the section $\check{\Lambda} \rightarrow \Aff_K$ which sends a character (where we identify $\check{\Lambda}$ with the character lattice) to the corresponding monomial in $\trop_*\Oc_{\G_m^{n,\an}}^\times$. 
	\label{AffKtorus}
\end{eg}

The following example will be very important when we begin to compute what the $K$-affine structure $\Aff_K$ from Theorem \ref{affkexp} looks like locally for log Calabi-Yau surfaces. 

\begin{eg}
Let $V \subseteq B^\sm$ be a connected open subset such that on $V$, $\rho$ locally looks like $\trop$. Let $\mu \in \Oc_K^\times$ and consider the element $\mu+ z \in \rho_*(\Oc_{X^\an}^\times)(V)$ for  $z$ a local torus coordinate on $\pi^{-1}(V)$ with $|z| < 1 $ on $V$.  Then $\Val(\mu + z) = 0$ and writing $\mu + z  = \mu\left(1+\frac{z}{\mu }\right)$ we see that $\Res_\Omega\left(1+\frac{z}{\mu }\right) = 1$. Thus $\mu +z \equiv \mu$ in $\rho_*(\Oc_{X^\an}^\times)/\ker \Res_\Omega$. \label{equiv}
\end{eg}

\begin{eg}
	Any element $f $ which can be written as a product of power series  will be an element of $\ker \Res_\Omega$. For example, 
\begin{align*}
	f = \frac{1}{1-tz}\cdot \frac{1}{1+tz^{-1}} \in \ker \Res_\Omega. 
\end{align*}

\end{eg}



\begin{notation}
	Throughout this paper, we will denote the explicit $K$-affine structure constructed in Proposition \ref{affkexp} by $\Aff_K$. 
\end{notation}

\subsection{\v{C}ech cohomology}
\label{cech}

A $K$-affine structure describes an element in $\text{Ext}^1(\check{\Lambda},K^\times) \cong H^1(B^\sm,\Lambda\otimes K^\times)$. 
Indeed, dualising and tensoring the short exact sequence defining the $K$-affine structure by $K^\times$, we have the short exact sequence 
\begin{align}
    \label{dualKaff}
    0 \rightarrow \Lambda \otimes K^\times  \rightarrow \mathcal{H}\text{om}(\Aff_K, K^\times) \rightarrow \text{Hom}(K^\times, K^\times)\rightarrow 0.
\end{align}
Taking long exact sequences,  we have \begin{align*}0 &\rightarrow H^0(B^\text{sm}, \Lambda\otimes K^\times) \rightarrow H^0(B^\text{sm},\mathcal{H}\text{om}(\Aff_K, K^\times)) \rightarrow H^0(B^\text{sm},\text{Hom}(K^\times,K^\times))  \\ &\xrightarrow{\ \ \delta\ \ } H^1(B^\text{sm}, \Lambda \otimes K^\times) \rightarrow H^1(B^\sm,\Hc\text{om}(\Aff_K,K^\times)) \longrightarrow \ldots \end{align*}
The corresponding element in $H^1(B^\sm, \Lambda\otimes K^\times)$ is the image, under the connecting homomorphism $\delta$, of the identity element in $\text{Hom}(K^\times, K^\times)$. 

\begin{rmk}
	We can also associate the cohomology class $[\Aff_\Z] \in H^1(B^\sm, \check{\Lambda}_\R)$ to the integral affine structure on $B^\sm$. This class, called the \textit{radiance obstruction class}, was introduced in \cite{goldmanhirsch}. The radiance obstruction class has been used recently in studying the notion of tropical periods \cite{tropicalperiod}, where an interpretation is given in terms of logarithmic Hodge theory. 
\end{rmk}

We will explicitly compute the $K$-affine structure for the case of log Calabi-Yau surfaces using \v{C}ech cohomology in \S\ref{localsing}. By \cite[\href{https://stacks.math.columbia.edu/tag/03F7}{Tag 03F7}]{stacks}, given an open cover $\mathcal{U}$ such that the connected components of every finite intersection are contractible, then \v{C}ech cohomology agrees with sheaf cohomology. 



We now describe how to compute the image of the  identity element in $\Hom(K^\times, K^\times) $ via a diagram chase.  We assume that the open cover $\mathcal{U}$ is refined enough such that on each open $U \in \mathcal{U}$, the local system $\Lambda|_U$ is constant.  The \v{C}ech double complex for the open cover $\mathcal{U}$ is as follows. 

\begin{figure}[H]
\fontsize{8}{9}
\centering
	\begin{tikzcd}[column sep = small]
 & 0\ar[d] & 0\ar[d] & 0\ar[d] & \\
 0 \ar[r] & \prod_{U \in \mathcal{U} } {(U,\Lambda\otimes K^\times)} \ar[r]\ar[d,"d_0"] & \prod_{U \in \mathcal{U} } (U,\mathcal{H}\text{om}(\text{Aff}_K,K^\times)) \ar[r,"\text{rest}"]\ar[d,"d_0'"] &  \prod_{U \in \mathcal{U} }(U,\text{Hom}(K^\times,K^\times)) \ar[r]\ar[d,"d_0''"] & 0 \\
 0 \ar[r] &  \prod_{U,U' \in \mathcal{U} } (U\cap U',\Lambda\otimes K^\times) \ar[r]\ar[d, "d_1"] &  \prod_{U, U' \in \mathcal{U}} \Gamma(U\cap U',\mathcal{H}\text{om}(\text{Aff}_K,K^\times)) \ar[r,]\ar[d, "d_1'"] &  \prod_{U,U'\in \mathcal{U}} \Gamma(U\cap U',\text{Hom}(K^\times,K^\times)) \ar[r]\ar[d, "d_1''"] & 0 \\
  & \dots  & \dots  & \dots & \\
	\end{tikzcd}
\end{figure}

Let $\alpha = (\text{id},,,\text{id}) \in \prod_{U}\Gamma\left(U,\Hom(K^\times,K^\times)\right)$ where $\text{id} : K^\times \rightarrow K^\times$ is the identity element in $\Hom(K^\times, K^\times)$.   A lift of the element $\text{id} \in \Gamma(U_{},\Hom(K^\times,K^\times))$ to $\Gamma(U_{},\mathcal{H}\text{om}(\Aff_K,K^\times))$ is given as follows:   for each $U$, we have $H^1(U, \Lambda \otimes K^\times) = 0$ since $U_{}$ is contractible and $\Lambda$ is a constant sheaf. In particular, the short exact sequence $$0 \longrightarrow K^\times|_{U}\longrightarrow \Aff_K|_{U}    \longrightarrow \check{\Lambda}|_{U} \longrightarrow 0$$ splits and there is a section $s_U : \Aff_K|_{U_{}}  \rightarrow K^\times$. By construction, this is a lift of $\text{id}: K^\times \rightarrow K^\times$ on $U$.

\subsection{Log Calabi-Yau surfaces}
\label{localsing}

In this section, we describe $\Aff_K$ associated to the non-archimedean SYZ fibration $\rho = \rho_{(\Yc,\Dc)}$ with $(\Yc,\Dc)$ the snc log model for $U = Y\backslash D$ from Proposition \ref{model} and compute the monodromy representation (\ref{monodromydef}) of $\Aff_K$; as a corollary, we will recover the monodromy of the integral affine structure (\ref{Zaffinecomp}). Recall the construction of $\Aff_K = \frac{\rho_*(\Oc_{U^\an}^\times)}{\ker \Res_\Omega}$ depends on a volume form $\Omega$. In the case of log Calabi-Yau surfaces, there is a distinguished volume form given by the pullback of the standard torus invariant volume form on $\overline{Y}$. It is immediate that $\Omega$ satisfies the constant norm assumption (\ref{constantnorm}). 

To compute the monodromy, we will utilise the proof of Proposition \ref{toricalong} to construct a basis of affinoid torus coordinates for each local toric chart and compute the composition of the transition functions for $\Aff_K$. 

We first introduce the setup and notation needed to carry out the computation. We fix an orientation on $D$ where the node $D_{i-1}\cap D_i$ is labelled $\infty_{i-1}$ on $D_{i-1}$ and $0_i$ on $D_i$.  This naturally induces an orientation on the skeleton $\Sk(U)$ and in particular on the boundary of each irreducible component in the special fibre.  Let $\tau_C$ be a 1-dimensional stratum  in $\Sk(U)$ corresponding to $C$ in $\Yc_k+ \Dc$. 

Let $X = Y_{j}^i$ be a non-toric irreducible component of $\Yc_k$ with boundary $\Delta_X = C_1+ C_2 + C_3 + C_4$ and denote the corresponding vertex $v_X = v_j^i$ in $\Sk(U)$. Suppose that the non-toric exceptional curve $E_X $ supported on $X$ corresponds to a non-toric exceptional curve $ E_j^i$ on $Y$ whose centre is on $\overline{D}_i$.  Without loss of generality, let $C_1$ denote the boundary divisor containing the centre of the non-toric blowup on $X$. Let  $H^{(m)}$ denote the prime component of the divisor $\Yc_k + \Dc$ containing $C_m$ which is not equal to $X$. The induced orientation on $X$ determines the divisors $X_0$ and $X_\infty$ for each $C_m$ as defined in \S \ref{constructfib}; we do not write the superscripts for $X_0$ and $X_\infty$ as it will be clear from context which rational curve we are working with.

 For each $C_m$,  we will give expressions for generators of
 
 \begin{align*}
 	\Lc_0^{(m)} &:= \Oc_{\widehat{\Yc}_{/C_m}}  (X_\infty -X_0)\\\Lc_h^{(m)}&:= \Oc_{\widehat{\Yc}_{/C_m}} (-H^{(m)} - b_hX_\infty) \\
 \end{align*} where $b_h = -(C_m\cdot H^{(m)})$ in terms of the pullback of torus coordinates on $ \G_{m,K}^2  \subset \overline{Y}$.  


 \begin{notation}

 Let  $s_{0}^{m}, s_h^{m}$ be generators of  $\Lc_0^{(m)}$ and $\Lc_h^{(m)}$ respectively and $(\Yc^{0},\Dc^{0})$ denote the Looijenga pair over $R$ with good reduction from Assumption \ref{assume} with $(\Yc^{0},\Dc^{0})\times_R K = (Y,D)$. 
Consider the fan $\overline{\Sigma}$ of the toric model $(\overline{Y},\overline{D})$ of $(Y,D)$ and suppose the divisor $\overline{D}_i$ corresponds to the ray $\overline{\rho}_i$ of $\overline{\Sigma}$; let $\mathbf{v}_i$ denote the primitive integral generator of $\overline{\rho}_i$.

Consider the pair of torus coordinates $(x,z):= \left(\chi^{\mathbf{v}_{i-1}^\vee},\chi^{\mathbf{v}_i^\vee}\right)$ on $Y$.   Suppose the zero locus of $x+\mu_j^i$ intersected with $\overline{D}_i$ is the centre of the non-toric blowup  with exceptional curve $E_j^i$. We note that the zero locus of $z$ in $Y$ contains all the non-toric exceptional curves arising from non-toric blowups with centre on $\overline{D}_i$. For this reason, we work with the following pair of coordinates: 

$$(x,y) := \left(x,\frac{z}{\prod_{j=1}^{k_i}(x+\mu_j^i)}\right).$$
By taking the formal completion, we are working away from the non-toric blow ups on $\overline{D}_i$ and thus do not need to consider them. 


\end{notation}
\subsection{Constructing the affinoid torus coordinates}
 \label{technical}
In this section, we will explicitly describe the generators of the line bundles $\Oc_{\widehat{\Yc}/C}(-X-X_
 \infty)$ in terms of the torus coordinates on  $(Y,D)$. The proceeding lemmas will carefully investigate how the birational modifications needed to construct the model $\Yc$ change the affinoid torus coordinates.

Let $(
\Yc^1,\Dc^1)$ denote the pair over $R$ given by blowing up the subvariety $\Dc_i^{0} \times_R \ \C$ in $\Yc^0$. Denote the new irreducible component by $Y'$. The divisor $X_\infty$ is given by the horizontal divisor corresponding to $D_{i+1}$.

By Proposition \ref{toricalong}, we know that $\widehat{\Yc^1}_{/C}$ is formally isomorphic to the formal toric scheme $\widehat{\Xc}^1_{/C_\mathbf{t}}$ whose fan is given in the proof of the proposition loc. cit.. 

\begin{lem}

 Under the formal isomorphism $\widehat{\Yc^1}_{/C} \cong \widehat{\Xc}^1_{/C_\mathbf{t}}$,  we have $s^1_{Y'} = \chi^{(0,1,0)} =  {y}$ is a generator of $\Oc_{\widehat{\Yc^1}_/C}(-Y'-b_{Y'}^1X_
 \infty)$ where $b_{Y'}^1 = -(C\cdot Y')_{\Yc^1}$. 
 \label{lemflopsection}
\end{lem}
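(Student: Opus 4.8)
The plan is to compute $\operatorname{div}(y)$ inside the formal toric scheme $\widehat{\Xc}^1_{/C_\mathbf{t}}$ of Proposition \ref{toricalong} and read off that, near $C$, it equals $Y'+b^1_{Y'}X_\infty$. Indeed, a rational function $f$ generates $\Oc_{\widehat{\Yc^1}_{/C}}(-Y'-b^1_{Y'}X_\infty)$ if and only if $\operatorname{div}(f)=Y'+b^1_{Y'}X_\infty$ on the formal neighbourhood of $C$, so the statement reduces to this divisor identity for $f=y=\chi^{(0,1,0)}$ (and to the fact, already contained in Proposition \ref{toricalong}, that the non-monomial expression $z/\prod_{j=1}^{k_i}(x+\mu_j^i)$ really does become the monomial $\chi^{(0,1,0)}$ under the formal isomorphism).

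First I would recall from the proof of Proposition \ref{toricalong} the fan $\Sigma^1\subset\mathbb{Z}^3$ together with its dictionary: which ray, with primitive generator $\mathbf{u}_{Y'}$, has toric divisor the exceptional component $Y'$; which ray, generator $\mathbf{u}_\infty$, has divisor $X_\infty=D_{i+1}$; the remaining ray generators; and the two-dimensional cone $\operatorname{Cone}(\mathbf{u}_{Y'},\mathbf{u}_\infty)$ whose orbit closure is the curve $C$. The virtue of completing along $C$ is that the only torus-invariant prime divisors that survive are $Y'$ and $X_\infty$, so $\operatorname{div}(y)$ is automatically supported on $Y'\cup X_\infty$ near $C$ and only two coefficients must be found. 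By the formula $\operatorname{div}(\chi^m)=\sum_\rho\langle m,\mathbf{u}_\rho\rangle V(\rho)$ (sum over rays $\rho$ of $\Sigma^1$, with $\mathbf{u}_\rho$ the primitive generator and $V(\rho)$ its boundary divisor) applied to $m=(0,1,0)$, the coefficient along $V(\rho)$ is the second coordinate of $\mathbf{u}_\rho$; from the explicit generators one checks $\langle(0,1,0),\mathbf{u}_{Y'}\rangle=1$, so $Y'$ occurs with multiplicity one, and the coefficient of $X_\infty$ is $(\mathbf{u}_\infty)_2$.

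The substantive point is therefore the identification $(\mathbf{u}_\infty)_2=b^1_{Y'}=-(C\cdot Y')_{\Yc^1}$. Since $C$ is the orbit closure of $\operatorname{Cone}(\mathbf{u}_{Y'},\mathbf{u}_\infty)$ and lies in $Y'=V(\mathbf{u}_{Y'})$, the intersection number $(C\cdot Y')_{\Yc^1}$ is $\deg\bigl(N_{Y'/\Yc^1}|_C\bigr)$, which I would evaluate by the wall relation: if $\operatorname{Cone}(\mathbf{u}_{Y'},\mathbf{u}_\infty,\mathbf{u}_a)$ and $\operatorname{Cone}(\mathbf{u}_{Y'},\mathbf{u}_\infty,\mathbf{u}_b)$ are the two maximal cones of $\Sigma^1$ containing $\operatorname{Cone}(\mathbf{u}_{Y'},\mathbf{u}_\infty)$ and $\mathbf{u}_a+\mathbf{u}_b+\alpha\,\mathbf{u}_{Y'}+\beta\,\mathbf{u}_\infty=0$ is the (smooth) relation between them, then $(C\cdot Y')_{\Yc^1}=\alpha$. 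Comparing second coordinates of this relation, using $(\mathbf{u}_{Y'})_2=1$ and the explicit values of $\mathbf{u}_a,\mathbf{u}_b,\mathbf{u}_\infty$ from Proposition \ref{toricalong}, gives $\alpha=-(\mathbf{u}_\infty)_2$, hence $(\mathbf{u}_\infty)_2=b^1_{Y'}$. (As a consistency check, $C$ is a fibre of the $\mathbb{P}^1$-bundle $Y'\to(\text{centre of the blow-up})$, so $N_{Y'/\Yc^1}|_C\cong\Oc_{\mathbb{P}^1}(-1)$ and $b^1_{Y'}=1$; it is nevertheless the combinatorial identity that matches the general normalisation.) Assembling the two previous paragraphs, $\operatorname{div}(y)=Y'+b^1_{Y'}X_\infty$ near $C$, which is the claim.

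I expect the only genuine difficulty to be bookkeeping rather than conceptual: fixing the orientation of $\Sigma^1$, deciding which adjacent generator is $\mathbf{u}_a$ and which is $\mathbf{u}_b$, and pinning down the sign in the toric intersection formula, all compatibly with the conventions set in Proposition \ref{toricalong}. Once those are fixed the argument is a short monomial computation, and the potentially delicate input — the disappearance of the non-toric blow-ups and the conversion of $z/\prod_j(x+\mu_j^i)$ into $\chi^{(0,1,0)}$ — has already been isolated in that proposition.
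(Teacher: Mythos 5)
Your overall strategy is the same as the paper's: invoke the toric structure of Proposition \ref{toricalong} near $C$ and read the lemma off from the divisor of the monomial $\chi^{(0,1,0)}$ (the paper simply writes the local fan with $u_0=(1,0,0)$, $u_Y=(0,0,1)$, $u_{Y'}=(0,1,1)$, $u_\infty=(-1,b^1_{Y'},0)$ and observes $\chi^{u_{Y'}^\vee}=\chi^{(0,1,0)}$). The genuine problem is your identification of the curve $C$. You take $C$ to be the orbit closure of $\operatorname{Cone}(\mathbf{u}_{Y'},\mathbf{u}_\infty)$, i.e.\ $Y'\cap X_\infty$. In this construction $C$ is the \emph{vertical} double curve $Y\cap Y'$ of the special fibre (the new exceptional component meeting the strict transform of the old one), corresponding to $\operatorname{Cone}(u_Y,u_{Y'})=\operatorname{Cone}((0,0,1),(0,1,1))$; the horizontal divisor $X_\infty$ is not a component through $C$ but meets $C$ transversally in the single point $c_\infty$ — this is exactly why twisting by $b^1_{Y'}X_\infty$ with $b^1_{Y'}=-(C\cdot Y')$ makes $\Oc(-Y'-b^1_{Y'}X_\infty)$ of degree zero on $C$ and hence trivializable. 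Two consequences: first, in the completion along $C$ four invariant divisors survive ($V(u_0)$, $Y$, $Y'$, $X_\infty$), not two — harmless here, since $\langle(0,1,0),u_0\rangle=\langle(0,1,0),u_Y\rangle=0$, but your justification is off; second, and more seriously, your wall relation is taken at the wrong wall. The relation at $\operatorname{Cone}(u_{Y'},u_\infty)$ computes intersection numbers of the curve $Y'\cap X_\infty$, not of $C$, and within the local fan of Proposition \ref{toricalong} that cone has only one adjacent maximal cone anyway, so the computation cannot even be set up there. The correct wall is $\operatorname{Cone}(u_Y,u_{Y'})$ with adjacent rays $u_0,u_\infty$, whose relation $u_0+u_\infty+b^1_{Y'}\,u_Y-b^1_{Y'}\,u_{Y'}=0$ gives $(C\cdot Y')_{\Yc^1}=-b^1_{Y'}$, which is the identity you want.

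The same misidentification sinks your ``consistency check'': $C$ is a \emph{section} of the ruling $Y'\to \Dc_i^{0}\times_R\C$, not a fibre (the fibres are the curves $B_j$ that get flopped later), so $N_{Y'/\Yc^1}|_C$ is not $\Oc_{\mathbb{P}^1}(-1)$ and $b^1_{Y'}$ is not $1$ in general — the paper deliberately keeps $b^1_{Y'}=-(C\cdot Y')_{\Yc^1}$ as an unspecified integer, and the subsequent lemmas would collapse if it were always $1$. Once you replace your wall by $\operatorname{Cone}(u_Y,u_{Y'})$ and drop the fibre claim, the rest of your argument (the divisor formula for $\chi^{(0,1,0)}$ and deferring the identification $y=\chi^{(0,1,0)}$, $x+\mu_1^i=\chi^{(1,0,0)}$ to Proposition \ref{toricalong}) is sound and in fact supplies the small verification the paper leaves implicit in its choice of $u_\infty$.
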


\begin{proof}
   We can reduce to the case of a single non-toric blow up \ie $j=1$ and by Proposition \ref{toricalong} find a toric structure in a (formal) neigbourhood of $C$. Then $\Yc^1$ is given locally by the fan depicted in Figure \ref{fan1} with $u_0 = (1,0,0), u_Y= (0,0,1), u_{Y'} = (0,1,1)$, $u _\infty = (-1,b^1_{Y'},0)$ Thus $\chi^{u_{Y'}^\vee} = \chi^{(0,1,0)}$. 
   \begin{figure}[H]
       \centering

\tikzset{every picture/.style={line width=0.75pt}} 

\begin{tikzpicture}[x=0.75pt,y=0.75pt,yscale=-1,xscale=1]

\draw    (200,140) -- (200,80) ;
\draw    (200,140) -- (268,140) ;
\draw [shift={(270,140)}, rotate = 180] [fill={rgb, 255:red, 0; green, 0; blue, 0 }  ][line width=0.08]  [draw opacity=0] (12,-3) -- (0,0) -- (12,3) -- cycle    ;
\draw    (200,80) -- (268,80) ;
\draw [shift={(270,80)}, rotate = 180] [fill={rgb, 255:red, 0; green, 0; blue, 0 }  ][line width=0.08]  [draw opacity=0] (12,-3) -- (0,0) -- (12,3) -- cycle    ;
\draw    (141.9,120.63) -- (200,140) ;
\draw [shift={(140,120)}, rotate = 18.43] [fill={rgb, 255:red, 0; green, 0; blue, 0 }  ][line width=0.08]  [draw opacity=0] (12,-3) -- (0,0) -- (12,3) -- cycle    ;
\draw    (141.79,50.89) -- (200,80) ;
\draw [shift={(140,50)}, rotate = 26.57] [fill={rgb, 255:red, 0; green, 0; blue, 0 }  ][line width=0.08]  [draw opacity=0] (12,-3) -- (0,0) -- (12,3) -- cycle    ;

\end{tikzpicture}       \label{fan1}
       \caption{Local fan for $\Yc^1$ at height one slice}

   \end{figure}
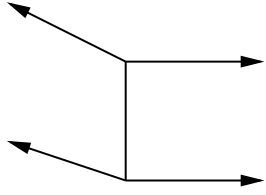
%
%
%
\end{proof}

Let $(\Yc^2,\Dc^2)$ denote the pair over obtained by applying Construction \ref{flopblow} (i) to all the exceptional curves with centre on $D_i$ \ie flop all the non-toric exceptional curves with centre on $\Dc_i \times_R\C$ to the newly produced irreducible component $Y'$ as depicted in Figure \ref{setupone}.
\begin{figure}[H]

\tikzset{every picture/.style={line width=0.75pt}} 

\begin{tikzpicture}[x=0.75pt,y=0.75pt,yscale=-1,xscale=1]

\draw    (198.79,72.52) .. controls (182.79,114.52) and (230.79,168.52) .. (268.79,167.52) ;
\draw [color={rgb, 255:red, 189; green, 16; blue, 224 }  ,draw opacity=1 ]   (209.79,130.52) -- (182.79,151.52) ;
\draw    (196,92) .. controls (226.79,66.52) and (227.86,74.04) .. (246.36,58.04) ;
\draw    (243.25,160.75) .. controls (282.09,146.71) and (284.09,137.46) .. (302.59,121.46) ;
\draw    (235.61,52.54) .. controls (242.11,87.04) and (261.68,105.1) .. (304.11,132.79) ;
\draw  [dash pattern={on 4.5pt off 4.5pt}]  (171.15,208.33) .. controls (188.15,177.33) and (184.25,178.75) .. (243.25,160.75) ;
\draw  [dash pattern={on 4.5pt off 4.5pt}]  (119.9,119.58) .. controls (148.68,125.1) and (163,127) .. (196,92) ;
\draw    (420.79,71.52) .. controls (404.79,113.52) and (452.79,167.52) .. (490.79,166.52) ;
\draw [color={rgb, 255:red, 189; green, 16; blue, 224 }  ,draw opacity=1 ]   (431.79,129.52) -- (458.79,109.52) ;
\draw    (418,91) .. controls (448.79,65.52) and (449.86,73.04) .. (468.36,57.04) ;
\draw    (465.25,159.75) .. controls (504.09,145.71) and (506.09,136.46) .. (524.59,120.46) ;
\draw    (457.61,51.54) .. controls (464.11,86.04) and (483.68,104.1) .. (526.11,131.79) ;
\draw  [dash pattern={on 4.5pt off 4.5pt}]  (393.15,207.33) .. controls (410.15,176.33) and (406.25,177.75) .. (465.25,159.75) ;
\draw  [dash pattern={on 4.5pt off 4.5pt}]  (341.9,118.58) .. controls (370.68,124.1) and (385,126) .. (418,91) ;

\draw (226,110.4) node [anchor=north west][inner sep=0.75pt]    {$Y'$};
\draw (460,122.4) node [anchor=north west][inner sep=0.75pt]    {$Y'$};
\draw (212,193.4) node [anchor=north west][inner sep=0.75pt]    {$ \begin{array}{l}
\left(\mathcal{Y}^{1} ,\mathcal{D}^{1}\right)\\
\end{array}$};
\draw (447,192.4) node [anchor=north west][inner sep=0.75pt]    {$ \begin{array}{l}
\left(\mathcal{Y}^{2} ,\mathcal{D}^{2}\right)\\
\end{array}$};
\draw (274,160.4) node [anchor=north west][inner sep=0.75pt]    {$C$};
\draw (497,158.4) node [anchor=north west][inner sep=0.75pt]    {$C$};

\end{tikzpicture}

\label{setupone}
\end{figure}

\begin{lem}

    Under the formal isomorphism $\widehat{\Yc^2}_{/C} \cong \widehat{\Xc}^2_{/C_\mathbf{t}}$, we have $s^2_{Y'} = \chi^{(0,-1,1)} $ is a generator of $\Oc_{\widehat{\Yc}^2/C}(-Y'-b_{Y'}^2X_\infty)$  where $b_{Y'}^2 = -(C\cdot Y')_{\Yc^2}$.
\end{lem}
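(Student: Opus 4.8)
The plan is to push the argument of Lemma~\ref{lemflopsection} one step further. As there, I would first reduce to a single non-toric blow-up ($j=1$) and then use Proposition~\ref{toricalong} to replace $\widehat{\Yc^2}_{/C}$ by the explicit formal toric scheme $\widehat{\Xc}^2_{/C_\mathbf{t}}$, whose fan $\Sigma^2$ is the one produced in the proof of that proposition; it is obtained from the fan $\Sigma^1$ of Figure~\ref{fan1} by carrying out the toric modification that realises the flop of Construction~\ref{flopblow}(i), namely transporting the exceptional curve with centre on $\Dc_i\times_R\C$ across the double curve onto $Y'$. Since $C$ meets this exceptional curve, the flop genuinely changes a neighbourhood of $C$, so $\widehat{\Yc^2}_{/C}$ is a new formal toric scheme and we really do need its fan. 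The first concrete task is therefore to record, in $\Sigma^2$, the primitive generator $u'_{Y'}$ of the ray attached to $Y'$, the generator $u_\infty$ of the ray attached to $X_\infty=D_{i+1}$, and the remaining rays whose divisors meet $C$; realising the flop as a toric blow-up followed by a toric blow-down, these are read off from Figure~\ref{fan1} by re-triangulating the star of the two-dimensional cone that cuts out $C$.

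With $\Sigma^2$ in hand, I would identify the trivialising monomial exactly as for $\Yc^1$: a monomial $\chi^m$ generates $\Oc_{\widehat{\Yc}^2/C}(-Y'-b^2_{Y'}X_\infty)$ precisely when $\operatorname{div}(\chi^m)=Y'+b^2_{Y'}X_\infty$ on the formal neighbourhood of $C$, i.e.\ when $\langle m,u'_{Y'}\rangle=1$, $\langle m,u_\infty\rangle=b^2_{Y'}$, and $\langle m,v\rangle=0$ for every other ray $v$ of the cones through $C$. Here $b^2_{Y'}=-(C\cdot Y')_{\Yc^2}$ is itself computed from $\Sigma^2$ through the toric self-intersection formula for $C$, equivalently by tracking how $(C\cdot Y')$ changes from $\Yc^1$ to $\Yc^2$ across the flop, the shift being the unit contribution of the flopped exceptional curve. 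Solving this linear system with the rays recorded above should return $m=(0,-1,1)$, in parallel with the identification $s^1_{Y'}=\chi^{(0,1,0)}$ on $\Yc^1$, after which it only remains to note that $\operatorname{div}(\chi^{(0,-1,1)})+Y'+b^2_{Y'}X_\infty$ has no component through $C$, so that $\chi^{(0,-1,1)}$ is an honest generator and not merely a section.

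I expect the genuinely delicate point to be the bookkeeping in the first step: making the toric realisation of Construction~\ref{flopblow}(i) precise enough to pin down $\Sigma^2$ --- in particular the modified generators $u'_{Y'}$ and $u_\infty$ entering the equations above --- and then checking that the resulting toric value of $b^2_{Y'}$ agrees with $-(C\cdot Y')_{\Yc^2}$. This is exactly the place where the non-toric blow-up, invisible on $\widehat{\Yc^1}_{/C}$ by the Notation, re-enters the picture, and it is what forces the generator to pass from $\chi^{(0,1,0)}$ to $\chi^{(0,-1,1)}$. Everything after that is the short linear-algebra verification above, together with a routine check, as in Lemma~\ref{lemflopsection}, that reducing to $j=1$ loses nothing, since the flops of the distinct non-toric curves with centre on $\Dc_i$ have disjoint supports and contribute independently.
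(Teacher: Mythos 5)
Your plan is essentially the paper's argument: reduce to a single non-toric blow-up, invoke Proposition \ref{toricalong} to replace $\widehat{\Yc^2}_{/C}$ by an explicit formal toric scheme, record the post-flop fan, and read off the generator as the monomial dual to the ray of $Y'$. Two caveats, one of which matters. First, your write-up defers precisely the step that \emph{is} the paper's proof: the paper records the post-flop ray data $u_0=(1,1,0)$, $u_Y=(0,0,1)$, $u_{Y'}=(0,1,1)$, $u_\infty=(-1,b^2_{Y'},0)$ (the flop of Construction \ref{flopblow}~(i) is realised toricly by replacing the ray $u_0=(1,0,0)$ of Figure \ref{fan1} by $(1,1,0)$), and without that data your ``short linear-algebra verification'' has no input; you only describe how one would obtain it. Second, and more substantively, once the system is actually solved on the charts through $C$ --- $\langle m,u_{Y'}\rangle=1$, $\langle m,u_Y\rangle=\langle m,u_0\rangle=0$ --- the answer is $m=(-1,1,0)$, i.e.\ $s^2_{Y'}=\chi^{(-1,1,0)}=y/(x+\mu_1^i)$, and \emph{not} $\chi^{(0,-1,1)}$ (which in these coordinates would be $t/y$). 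The exponent in the lemma statement appears to be a typo: the value $\chi^{(-1,1,0)}$ is the one derived in the paper's proof and the one actually used in Equation \ref{chaineq}, Corollary \ref{firstequal}, and the later expressions for $s^4_{Y'}$. So your prediction that the computation ``should return $(0,-1,1)$'', made to match the stated lemma, would not survive carrying the computation out; apart from this, the route you sketch coincides with the paper's and, once the fan data is written down, completes without difficulty.
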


\begin{proof}
    The flop is given by the local fan 

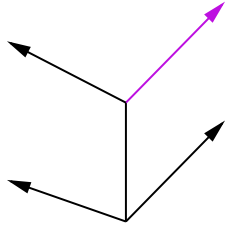
\begin{figure}[H]
    \centering

\tikzset{every picture/.style={line width=0.75pt}} 

\tikzset{every picture/.style={line width=0.75pt}} 

\begin{tikzpicture}[x=0.75pt,y=0.75pt,yscale=-1,xscale=1]

\draw    (220,171) -- (220,111) ;
\draw    (220,171) -- (268.6,121.43) ;
\draw [shift={(270,120)}, rotate = 134.43] [fill={rgb, 255:red, 0; green, 0; blue, 0 }  ][line width=0.08]  [draw opacity=0] (12,-3) -- (0,0) -- (12,3) -- cycle    ;
\draw    (161.89,150.66) -- (220,171) ;
\draw [shift={(160,150)}, rotate = 19.29] [fill={rgb, 255:red, 0; green, 0; blue, 0 }  ][line width=0.08]  [draw opacity=0] (12,-3) -- (0,0) -- (12,3) -- cycle    ;
\draw [color={rgb, 255:red, 189; green, 16; blue, 224 }  ,draw opacity=1 ]   (268.6,61.43) -- (220,111) ;
\draw [shift={(270,60)}, rotate = 134.43] [fill={rgb, 255:red, 189; green, 16; blue, 224 }  ,fill opacity=1 ][line width=0.08]  [draw opacity=0] (12,-3) -- (0,0) -- (12,3) -- cycle    ;
\draw    (161.78,80.92) -- (220,111) ;
\draw [shift={(160,80)}, rotate = 27.32] [fill={rgb, 255:red, 0; green, 0; blue, 0 }  ][line width=0.08]  [draw opacity=0] (12,-3) -- (0,0) -- (12,3) -- cycle    ;

\end{tikzpicture}
    \caption{Local fan for $\Yc^2$ at height 1 slice}
    \label{fig:enter-label}
\end{figure}\noindent
and can be described as in Proposition \ref{toricalong} by $u_0 = (1,1,0), u_Y = (0,0,1), u_{Y'} = (0,1,1)$, $u_\infty = (-1,b_{Y'}^2,0)$. 
Then the local equation for $Y'$ is given by $\chi^{u_{Y'}^\vee} = \chi^{(-1,1,0)}$.  
\end{proof}
\noindent 
Note also have the relation that 
\begin{align}
    s^2_{Y'}= \chi^{(-1, 1,0)} = \chi^{(-1,0,0)} \chi^{(0,1,0)}  = s_{X_\infty}^2 s_{Y'}^1 = \frac{s_{Y'}^1}{x+\mu_1^i}, 
    \label{chaineq}
\end{align}

\noindent since $\chi^{(1,0,0)} = x+\mu_1^i$. By induction on the number of non-toric blow ups supported on $D_i$ we have the following equality. 

\begin{cor}
    We have the relation \begin{align}
 s^2_{Y'}\prod_j (x+\mu_j^i) = s^1_{Y'}
\end{align}
\label{firstequal}
\end{cor}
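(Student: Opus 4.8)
The plan is to induct on $k_i$, the number of non-toric blow-ups with centre on $\overline{D}_i$, removing one flop at a time. The base case $k_i = 1$ is exactly equation \eqref{chaineq}: applying Construction \ref{flopblow}(i) to the single exceptional curve $E_1^i$ sends the generator $s^1_{Y'} = \chi^{(0,1,0)}$ to $s^2_{Y'} = \chi^{(-1,1,0)} = \chi^{(-1,0,0)}\chi^{(0,1,0)} = s^1_{Y'}/(x+\mu_1^i)$, where we use $\chi^{(1,0,0)} = x+\mu_1^i$ coming from the choice of torus coordinates in the Notation above together with the local fans of Figures \ref{fan1} and \ref{fig:enter-label}.

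For general $k_i$ I would interpolate a chain of models
$$\Yc^1 = \mathcal{Z}_0, \ \mathcal{Z}_1, \ \dots, \ \mathcal{Z}_{k_i} = \Yc^2,$$
where $\mathcal{Z}_t$ is obtained from $\mathcal{Z}_{t-1}$ by applying Construction \ref{flopblow}(i) to the exceptional curve $E_t^i$, i.e. by flopping it onto the component $Y'$. By (the evident extension of) Proposition \ref{toricalong}, each $\widehat{\mathcal{Z}_t}_{/C}$ is again formally toric, and the local fan at the height-one slice differs from that of $\mathcal{Z}_{t-1}$ by precisely the elementary modification depicted in Figures \ref{fan1}--\ref{fig:enter-label}, now carried out along the ray and cone corresponding to the blow-up locus $\{x+\mu_t^i = 0\}$. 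Reading off $u_{Y'}^\vee$ before and after this step gives, exactly as in the base case, $s^{\mathcal{Z}_t}_{Y'} = \chi^{(-1,0,0)}\, s^{\mathcal{Z}_{t-1}}_{Y'} = s^{\mathcal{Z}_{t-1}}_{Y'}/(x+\mu_t^i)$, since in the relevant chart $\chi^{(1,0,0)} = x+\mu_t^i$ is the equation of the non-toric centre flopped at that stage. Composing these $k_i$ relations telescopes to $s^2_{Y'}\prod_{j=1}^{k_i}(x+\mu_j^i) = s^1_{Y'}$.

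The step I expect to require the most care is the bookkeeping of torus coordinates along the chain: I must check that after the $t$-th flop the monomial $x = \chi^{\mathbf{v}_{i-1}^\vee}$ still restricts to the affine coordinate on $\overline{D}_i$ in which the next centre is literally $\{x+\mu_{t+1}^i = 0\}$, so that each stage contributes a fresh linear factor rather than one already used. This holds because the flops in Construction \ref{flopblow}(i) are isomorphisms away from the flopped curves, and in particular do not affect the generic geometry along $\overline{D}_i$; hence $x$ and the constants $\mu_j^i$ are unchanged and the product $\prod_{j=1}^{k_i}(x+\mu_j^i)$ of the Notation is recovered factor by factor. The twisting coefficients $b^t_{Y'} = -(C\cdot Y')_{\mathcal{Z}_t}$ play no role in the stated identity, which is an equality of rational sections, though one can verify in passing that the successive jumps in $b^t_{Y'}$ are consistent with $\chi^{(-1,0,0)}$ carrying the relevant order along $X_\infty$.
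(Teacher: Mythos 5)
Your proposal is correct and takes essentially the same route as the paper: the paper's own proof consists of the single-blow-up computation in Equation \ref{chaineq}, namely $s^2_{Y'} = s^1_{Y'}/(x+\mu_1^i)$, followed by exactly the induction on the number of non-toric blow-ups supported on $D_i$ that you carry out. Your chain of intermediate models $\mathcal{Z}_0,\dots,\mathcal{Z}_{k_i}$, flopping one exceptional curve at a time and picking up one factor $(x+\mu_t^i)$ per step, is a faithful (and more explicit) elaboration of the paper's terse argument.
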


We next deal with the case when we want to move all but one non-toric exceptional curve off an irreducible component. Let $(\Yc^3, \Dc^3)$ denote the pair over $R$ produced by blowing up $\Dc_i^3\times_R\C$ in $\Yc^2$. 

\begin{figure}[H]

\tikzset{every picture/.style={line width=0.75pt}} 

\begin{tikzpicture}[x=0.75pt,y=0.75pt,yscale=-1,xscale=1]

\draw    (178.42,88.52) .. controls (162.42,130.52) and (210.42,184.52) .. (248.42,183.52) ;
\draw [color={rgb, 255:red, 189; green, 16; blue, 224 }  ,draw opacity=1 ]   (189.42,146.52) -- (216.42,126.52) ;
\draw    (175.63,108) .. controls (206.42,82.52) and (207.49,90.04) .. (225.99,74.04) ;
\draw    (222.88,176.75) .. controls (261.72,162.71) and (263.72,153.46) .. (282.22,137.46) ;
\draw    (215.24,68.54) .. controls (221.74,103.04) and (241.31,121.1) .. (283.74,148.79) ;
\draw  [dash pattern={on 4.5pt off 4.5pt}]  (150.78,224.33) .. controls (167.78,193.33) and (163.88,194.75) .. (222.88,176.75) ;
\draw [color={rgb, 255:red, 245; green, 166; blue, 35 }  ,draw opacity=1 ]   (198.42,159.52) -- (227.79,137.52) ;
\draw [color={rgb, 255:red, 245; green, 166; blue, 35 }  ,draw opacity=1 ] [dash pattern={on 4.5pt off 4.5pt}]  (222.79,143.52) .. controls (229.79,120.52) and (229.79,126.52) .. (260.79,106.52) ;
\draw    (361.42,81.52) .. controls (345.42,123.52) and (393.42,177.52) .. (431.42,176.52) ;
\draw [color={rgb, 255:red, 189; green, 16; blue, 224 }  ,draw opacity=1 ]   (372.42,139.52) -- (399.42,119.52) ;
\draw    (358.63,101) .. controls (389.42,75.52) and (390.49,83.04) .. (408.99,67.04) ;
\draw    (405.88,169.75) .. controls (444.72,155.71) and (446.72,146.46) .. (465.22,130.46) ;
\draw    (398.24,61.54) .. controls (404.74,96.04) and (424.31,114.1) .. (466.74,141.79) ;
\draw  [dash pattern={on 4.5pt off 4.5pt}]  (333.78,217.33) .. controls (350.78,186.33) and (346.88,187.75) .. (405.88,169.75) ;
\draw  [dash pattern={on 4.5pt off 4.5pt}]  (282.53,128.58) .. controls (311.31,134.1) and (325.63,136) .. (358.63,101) ;
\draw    (408.99,67.04) .. controls (439.77,41.57) and (440.85,49.09) .. (459.35,33.09) ;
\draw    (465.22,130.46) .. controls (489.79,108.52) and (496.29,111.52) .. (514.79,95.52) ;
\draw    (449.24,30.54) .. controls (455.74,65.04) and (475.31,83.1) .. (517.74,110.79) ;
\draw [color={rgb, 255:red, 245; green, 166; blue, 35 }  ,draw opacity=1 ]   (385.42,154.52) -- (414.79,132.52) ;
\draw [color={rgb, 255:red, 245; green, 166; blue, 35 }  ,draw opacity=1 ] [dash pattern={on 4.5pt off 4.5pt}]  (405.79,143.52) .. controls (412.79,120.52) and (402.79,138.52) .. (433.79,118.52) ;
\draw  [dash pattern={on 4.5pt off 4.5pt}]  (99.53,135.58) .. controls (128.31,141.1) and (142.63,143) .. (175.63,108) ;

\draw (197.63,103.4) node [anchor=north west][inner sep=0.75pt]    {$Y'$};
\draw (204.63,209.4) node [anchor=north west][inner sep=0.75pt]    {$ \begin{array}{l}
\left(\mathcal{Y}^{2} ,\mathcal{D}^{2}\right)\\
\end{array}$};
\draw (380.63,101.4) node [anchor=north west][inner sep=0.75pt]    {$Y'$};
\draw (386.63,201.4) node [anchor=north west][inner sep=0.75pt]    {$ \begin{array}{l}
\left(\mathcal{Y}^{3} ,\mathcal{D}^{3}\right)\\
\end{array}$};
\draw (433.63,77.4) node [anchor=north west][inner sep=0.75pt]    {$Y''$};
\draw (468.74,145.19) node [anchor=north west][inner sep=0.75pt]    {$C'$};
\draw (435.74,172.19) node [anchor=north west][inner sep=0.75pt]    {$C$};
\draw (251.74,176.19) node [anchor=north west][inner sep=0.75pt]    {$C$};

\end{tikzpicture}
\label{setup3}
\end{figure}

\begin{lem}
    We have $s^3_{Y'} = s^2_{Y'}$ and $s^3_{Y''} =\frac{y}{t}$ where $s^3_{Y'} $ is a generator of $\Oc_{\widehat{\Yc^3}/C}(-Y'-b_{Y'}^3X_\infty)$, $s_{Y''}^3$ is a generator of $\Oc_{\widehat{\Yc^3}/C'}(-Y''-b_{Y''}^3X_\infty)$ and $b_{Y'}^3 = -(C\cdot Y')_{\Yc^3}$, $b_{Y''}^3 = -(C'\cdot Y'')_{\Yc^3}$. 
    \label{lemflopsection3}
\end{lem}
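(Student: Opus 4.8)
The proof follows the same pattern as Lemma~\ref{lemflopsection} and its analogue for $\Yc^2$: apply Proposition~\ref{toricalong} to realise formal neighbourhoods of $C$ and of $C'$ in $\Yc^3$ as formal toric schemes, write down the two local fans, read off the distinguished sections as characters, and translate back into the coordinates $(x,y)$ and the uniformiser $t$ using the glueing relations between the $s^k_\bullet$.

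For the equality $s^3_{Y'}=s^2_{Y'}$ I would argue that the blow-up $\Yc^3\to\Yc^2$ has centre $\Dc_i^3\times_R\C$, which does not meet $C$ (cf.\ Figure~\ref{setup3}), hence is an isomorphism on a neighbourhood of $C$. Consequently the local fan of $\Yc^3$ at $C$ is exactly the one for $\Yc^2$, the character $\chi^{(-1,1,0)}$ still cuts out $Y'$, and $(C\cdot Y')_{\Yc^3}=(C\cdot Y')_{\Yc^2}$; so $s^3_{Y'}=s^2_{Y'}$ and $b^3_{Y'}=b^2_{Y'}$, and the line bundle statement for $s^3_{Y'}$ is inherited verbatim from the $\Yc^2$ case.

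For $s^3_{Y''}$ I would reduce, as in the proof of Lemma~\ref{lemflopsection}, to a single non-toric blow-up and apply Proposition~\ref{toricalong} near $C'$. The new component $Y''$ produced by blowing up $\Dc_i^3\times_R\C$ lies one step further from $Y$ along the central fibre than $Y'$ did (its centre sits on $Y'$), and the local fan at $C'$ has the same shape as Figure~\ref{fan1}: one may take $u_0=(1,0,0)$, $u_{Y'}=(0,0,1)$ in the role of $u_Y$, $u_{Y''}=(0,1,1)$ and $u_\infty=(-1,b^3_{Y''},0)$, where the last entry of $u_\infty$ is forced by the closing-up condition and equals $-(C'\cdot Y'')_{\Yc^3}$ by the usual toric intersection computation (exactly as $b^1_{Y'}=-(C\cdot Y')$ in Lemma~\ref{lemflopsection}). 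This reads off $s^3_{Y''}=\chi^{u_{Y''}^\vee}=\chi^{(0,1,0)}$ in the local coordinates of the $C'$-chart, and it generates $\Oc_{\widehat{\Yc^3}/C'}(-Y''-b^3_{Y''}X_\infty)$ just as $y$ generated the analogous bundle for $Y'$ in Lemma~\ref{lemflopsection}.

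It remains to identify the character $\chi^{(0,1,0)}$ of the $C'$-chart with the global rational function $y/t$. In Lemma~\ref{lemflopsection} the same character at the $C$-chart was identified with $y=s^1_{Y'}$, and the discrepancy here is a single power of $t$: heuristically, passing from the $C$-chart to the $C'$-chart shifts one unit in the height direction of the fan, and since $t=\chi^{(0,0,1)}$ measures height this multiplies the relevant monomial by $t^{-1}$, giving $s^3_{Y''}=y/t$. Making this precise is the main obstacle. The safest route is to carry the change-of-fan isomorphism furnished by Proposition~\ref{toricalong} through the entire modification depicted in Figure~\ref{setup3}, keeping track of (i) the exact power of $t$, which must come out to $-1$ relative to $s^1_{Y'}=y$, and (ii) the fact that no factor $x+\mu_j^i$ reappears --- the product $\prod_j(x+\mu_j^i)$ having already been absorbed into $y$ by the flops defining $\Yc^2$ (Corollary~\ref{firstequal}), and the blow-up producing $Y''$ contributing no new linear factor.
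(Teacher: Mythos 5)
Your treatment of the first claim coincides with the paper's: the centre of $\Yc^3\to\Yc^2$ is disjoint from $C$, so the local fan at $C$ and the character $\chi^{(-1,1,0)}$ are untouched and $s^3_{Y'}=s^2_{Y'}$ (with $b^3_{Y'}=b^2_{Y'}$). That part is fine.

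The gap is in the computation of $s^3_{Y''}$, and you name it yourself: identifying your local character with a global monomial in $(x,y,t)$ is, in your words, ``the main obstacle,'' and you only offer a heuristic (``one step further in the height direction gives a factor $t^{-1}$'') plus a promise to ``carry the change-of-fan isomorphism through the entire modification.'' But that identification is the entire content of the lemma, and the way you set up the chart makes it inaccessible. The paper never re-normalises coordinates: all of $\Yc^1,\dots,\Yc^4$ are described by fans in one fixed lattice in which $\chi^{(1,0,0)}=x+\mu_1^i$ and $\chi^{(0,0,1)}=t$ once and for all, so at $C'$ the relevant rays are $u_0'=(1,1,0)$, $u_{Y'}'=(0,1,1)$ and, crucially, $u_{Y''}'=(0,2,1)$ --- not $(0,1,1)$ --- and the dual vector gives $s^3_{Y''}=\chi^{(-1,1,-1)}=s^2_{Y'}/t$, which is then rewritten in the generic-fibre coordinates using Equation \ref{chaineq} and $s^1_{Y'}=y$. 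By instead choosing fresh primitive generators at $C'$ with $u_{Y''}=(0,1,1)$ and $u_{Y'}$ playing the role of $u_Y$, you discard exactly the lattice data (the position of $u_{Y''}$ relative to the fixed cocharacters of $t$ and of $x+\mu_1^i$) that determines the power of $t$ and whether linear factors $x+\mu_j^i$ occur; asserting that the exponent of $t$ ``must come out to $-1$'' is assuming the conclusion. Your point (ii) is also asserted rather than derived, and the direction is off: Corollary \ref{firstequal} says $s^2_{Y'}\prod_j(x+\mu_j^i)=s^1_{Y'}=y$, i.e.\ the flops divide the section by the linear factors rather than absorbing them into $y$, so whether such a factor survives in $s^3_{Y''}$ is precisely what the fixed-lattice computation has to settle. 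To close the gap, do what the paper does: stay inside the single toric model furnished by Proposition \ref{toricalong}, record $u_{Y''}$ in that fixed lattice, and read off its dual character there, converting to $(x,y,t)$ via the already-established relations for $s^1_{Y'}$ and $s^2_{Y'}$.
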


\begin{proof}
    The height one slice local fan for $\Yc^3$ is depicted below:

    \begin{figure}[H]
        \centering

\tikzset{every picture/.style={line width=0.75pt}} 

\begin{tikzpicture}[x=0.75pt,y=0.75pt,yscale=-1,xscale=1]

\draw    (220,260) -- (220,200) ;
\draw    (220,260) -- (278,259.03) ;
\draw [shift={(280,259)}, rotate = 179.05] [fill={rgb, 255:red, 0; green, 0; blue, 0 }  ][line width=0.08]  [draw opacity=0] (12,-3) -- (0,0) -- (12,3) -- cycle    ;
\draw    (220,200) -- (220,140) ;
\draw [color={rgb, 255:red, 245; green, 166; blue, 35 }  ,draw opacity=1 ] [dash pattern={on 4.5pt off 4.5pt}]  (220,200) -- (268.6,150.43) ;
\draw [shift={(270,149)}, rotate = 134.43] [fill={rgb, 255:red, 245; green, 166; blue, 35 }  ,fill opacity=1 ][line width=0.08]  [draw opacity=0] (12,-3) -- (0,0) -- (12,3) -- cycle    ;
\draw    (161.89,239.66) -- (220,260) ;
\draw [shift={(160,239)}, rotate = 19.29] [fill={rgb, 255:red, 0; green, 0; blue, 0 }  ][line width=0.08]  [draw opacity=0] (12,-3) -- (0,0) -- (12,3) -- cycle    ;
\draw [color={rgb, 255:red, 245; green, 166; blue, 35 }  ,draw opacity=1 ]   (278,200) -- (220,200) ;
\draw [shift={(280,200)}, rotate = 180] [fill={rgb, 255:red, 245; green, 166; blue, 35 }  ,fill opacity=1 ][line width=0.08]  [draw opacity=0] (12,-3) -- (0,0) -- (12,3) -- cycle    ;
\draw [color={rgb, 255:red, 189; green, 16; blue, 224 }  ,draw opacity=1 ]   (161.78,169.92) -- (220,200) ;
\draw [shift={(160,169)}, rotate = 27.32] [fill={rgb, 255:red, 189; green, 16; blue, 224 }  ,fill opacity=1 ][line width=0.08]  [draw opacity=0] (12,-3) -- (0,0) -- (12,3) -- cycle    ;
\draw    (268.59,91.41) -- (220,140) ;
\draw [shift={(270,90)}, rotate = 135] [fill={rgb, 255:red, 0; green, 0; blue, 0 }  ][line width=0.08]  [draw opacity=0] (12,-3) -- (0,0) -- (12,3) -- cycle    ;
\draw    (161.78,109.92) -- (220,140) ;
\draw [shift={(160,109)}, rotate = 27.32] [fill={rgb, 255:red, 0; green, 0; blue, 0 }  ][line width=0.08]  [draw opacity=0] (12,-3) -- (0,0) -- (12,3) -- cycle    ;
\draw    (220,140) -- (220,82) ;
\draw [shift={(220,80)}, rotate = 90] [fill={rgb, 255:red, 0; green, 0; blue, 0 }  ][line width=0.08]  [draw opacity=0] (12,-3) -- (0,0) -- (12,3) -- cycle    ;

\end{tikzpicture}
  \caption{Local fan for $\Yc^3$ at height one slice}
        \label{fig:enter-label}
    \end{figure}
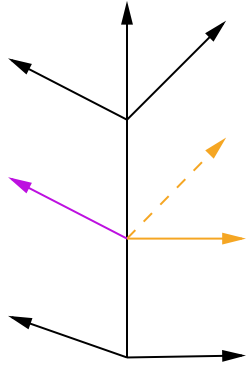 

Since the modification is away from $C$, we have that $s^3_{Y'} = s^2_{Y'} = \chi^{(-1,1,0)}$.  To compute $s_{Y''}^3$, the local toric fan at height 1 is depicted below where $u_0' = (1,1,0), u_{Y'}' = (0,1,1), u_{Y''}' = (0,2,1)$. Then $\chi^{u_{Y''}^\vee} = \chi^{(-1,1,-1)}  = \frac{s^2_{Y'}}{t} $ where the last equality follows from Equation \ref{chaineq}.

    
\end{proof}

 Let $B_j \subset Y'$ denote the strict transform of the fibre of $Y'$ which passes though $\mu_j^i$. Let $({\Yc}^4,{\Dc}^4)$ denote the  pair over $R$ produced by running Construction \ref{flopblow} (ii) to all but one $B_j$ \ie  flop all but one $B_j$, say $B_1$, to the new irreducible component $Y''$.

 \begin{figure}[H]
     \centering

\tikzset{every picture/.style={line width=0.75pt}} 

\begin{tikzpicture}[x=0.75pt,y=0.75pt,yscale=-1,xscale=1]

\draw    (458.42,85.52) .. controls (442.42,127.52) and (490.42,181.52) .. (528.42,180.52) ;
\draw [color={rgb, 255:red, 189; green, 16; blue, 224 }  ,draw opacity=1 ]   (469.42,143.52) -- (496.42,123.52) ;
\draw    (455.63,105) .. controls (486.42,79.52) and (487.49,87.04) .. (505.99,71.04) ;
\draw    (502.88,173.75) .. controls (541.72,159.71) and (543.72,150.46) .. (562.22,134.46) ;
\draw    (495.24,65.54) .. controls (501.74,100.04) and (521.31,118.1) .. (563.74,145.79) ;
\draw  [dash pattern={on 4.5pt off 4.5pt}]  (430.78,221.33) .. controls (447.78,190.33) and (443.88,191.75) .. (502.88,173.75) ;
\draw  [dash pattern={on 4.5pt off 4.5pt}]  (379.53,132.58) .. controls (408.31,138.1) and (422.63,140) .. (455.63,105) ;
\draw [color={rgb, 255:red, 245; green, 166; blue, 35 }  ,draw opacity=1 ]   (531.42,123.52) -- (558.42,103.52) ;
\draw    (505.99,71.04) .. controls (536.77,45.57) and (537.85,53.09) .. (556.35,37.09) ;
\draw    (562.22,134.46) .. controls (586.79,112.52) and (593.29,115.52) .. (611.79,99.52) ;
\draw    (546.24,34.54) .. controls (552.74,69.04) and (572.31,87.1) .. (614.74,114.79) ;
\draw    (256.42,87.52) .. controls (240.42,129.52) and (288.42,183.52) .. (326.42,182.52) ;
\draw [color={rgb, 255:red, 189; green, 16; blue, 224 }  ,draw opacity=1 ]   (267.42,145.52) -- (294.42,125.52) ;
\draw    (253.63,107) .. controls (284.42,81.52) and (285.49,89.04) .. (303.99,73.04) ;
\draw    (300.88,175.75) .. controls (339.72,161.71) and (341.72,152.46) .. (360.22,136.46) ;
\draw    (293.24,67.54) .. controls (299.74,102.04) and (319.31,120.1) .. (361.74,147.79) ;
\draw  [dash pattern={on 4.5pt off 4.5pt}]  (228.78,223.33) .. controls (245.78,192.33) and (241.88,193.75) .. (300.88,175.75) ;
\draw  [dash pattern={on 4.5pt off 4.5pt}]  (177.53,134.58) .. controls (206.31,140.1) and (220.63,142) .. (253.63,107) ;
\draw    (303.99,73.04) .. controls (334.77,47.57) and (335.85,55.09) .. (354.35,39.09) ;
\draw    (360.22,136.46) .. controls (384.79,114.52) and (391.29,117.52) .. (409.79,101.52) ;
\draw    (344.24,36.54) .. controls (350.74,71.04) and (370.31,89.1) .. (412.74,116.79) ;
\draw [color={rgb, 255:red, 245; green, 166; blue, 35 }  ,draw opacity=1 ]   (280.42,160.52) -- (309.79,138.52) ;
\draw [color={rgb, 255:red, 245; green, 166; blue, 35 }  ,draw opacity=1 ] [dash pattern={on 4.5pt off 4.5pt}]  (300.79,149.52) .. controls (307.79,126.52) and (297.79,144.52) .. (328.79,124.52) ;

\draw (497.63,135.4) node [anchor=north west][inner sep=0.75pt]    {$Y'$};
\draw (530.63,81.4) node [anchor=north west][inner sep=0.75pt]    {$Y''$};
\draw (275.63,107.4) node [anchor=north west][inner sep=0.75pt]    {$Y'$};
\draw (281.63,207.4) node [anchor=north west][inner sep=0.75pt]    {$ \begin{array}{l}
\left(\mathcal{Y}^{3} ,\mathcal{D}^{3}\right)\\
\end{array}$};
\draw (328.63,83.4) node [anchor=north west][inner sep=0.75pt]    {$Y''$};
\draw (363.74,151.19) node [anchor=north west][inner sep=0.75pt]    {$C'$};
\draw (565.74,149.19) node [anchor=north west][inner sep=0.75pt]    {$C'$};
\draw (330.74,178.19) node [anchor=north west][inner sep=0.75pt]    {$C$};
\draw (533.74,174.19) node [anchor=north west][inner sep=0.75pt]    {$C$};
\draw (476.63,204.4) node [anchor=north west][inner sep=0.75pt]    {$ \begin{array}{l}
\left(\mathcal{Y}^{4} ,\mathcal{D}^{4}\right)\\
\end{array}$};

\end{tikzpicture}
     \caption{Caption}
     \label{fig:enter-label}
 \end{figure}

 \begin{lem}
     Under the formal isomorphism $\widehat{\Yc^4}_{/C}\cong \Xc^4_{/C_\mathbf{t}}$ we have $s_{Y'}^4 = \chi^{(0,1,0)}$, $s_{Y''}^4  = \chi^{(0,1,-1)}$ where $s^4_{Y'} $ is a generator of $\Oc_{\widehat{\Yc^4}/C}(-Y'-b_{Y'}^4X_\infty)$, $s_{Y''}^4$ is a generator of $\Oc_{\widehat{\Yc^4}/C'}(-Y''-b_{Y''}^4X_\infty)$ and $b_{Y'}^4 = -(C\cdot Y')_{\Yc^4}$, $b_{Y''}^4 = -(C\cdot Y'')_{\Yc^4}$. 
 \end{lem}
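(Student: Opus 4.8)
The plan is to mirror the proofs of the preceding lemmas in this section. By Proposition \ref{toricalong}, $\widehat{\Yc^4}_{/C}$ and $\widehat{\Yc^4}_{/C'}$ each acquire a formal toric structure, and I would exhibit the height-one slice of the local fan at each of the two fixed loci. Reading off the primitive integral generators of the rays bounding the cone of $C$ (resp. $C'$), the section $s^4_{Y'}$ (resp. $s^4_{Y''}$) is then the local equation of the divisor $Y'$ (resp. $Y''$), i.e. the dual basis vector; and, exactly as in the earlier lemmas, the intersection numbers in $b^4_{Y'}$ and $b^4_{Y''}$ are encoded by the ray generator of $X_\infty$.

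First I would work out the two fans. The passage from $\Yc^3$ to $\Yc^4$ is Construction \ref{flopblow}(ii) applied successively to $B_2,\dots,B_{k_i}$; since the $B_j$ are strict transforms of fibres of $Y'$, they meet the section $C$, so each flop genuinely changes the toric chart around $C$ (this is why $s^4_{Y'}$ need not agree with $s^3_{Y'}$), while around $C'$ it enlarges the fan of $Y''$ by one triangulated slab per flop. Near $C$ I expect the cone of $C$ to be spanned by $u_Y=(0,0,1)$, $u_{Y'}=(0,1,1)$ and a third generator $u_0$ which the flops rotate into the hyperplane orthogonal to $(0,1,0)$; solving $\langle m,u_{Y'}\rangle=1$, $\langle m,u_Y\rangle=0$, $\langle m,u_0\rangle=0$ then forces $m=(0,1,0)$, so $s^4_{Y'}=\chi^{(0,1,0)}$, while $u_\infty=(-1,b^4_{Y'},0)$ records $b^4_{Y'}=-(C\cdot Y')_{\Yc^4}$. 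Near $C'$ I would start from the generators $u_{Y'}'=(0,1,1)$ and $u_{Y''}'=(0,2,1)$ obtained in the proof of Lemma \ref{lemflopsection3}, check that $u_{Y''}'$ is undisturbed by the flops, determine the third generator of the cone of $C'$ afterwards, and solve the analogous linear system; one gets $m'=(0,1,-1)$, so $s^4_{Y''}=\chi^{(0,1,-1)}$, and $b^4_{Y''}$ is read off from the new $u_\infty'$.

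As a consistency check I would re-derive these from the chain relations used earlier: iterating Equation \eqref{chaineq} through the successive charts produced by the flops of $B_2,\dots,B_{k_i}$ gives $s^4_{Y'}\prod_{j=2}^{k_i}(x+\mu_j^i)=s^3_{Y'}$ and, combined with $s^3_{Y'}=s^2_{Y'}$, Corollary \ref{firstequal}, and $s^3_{Y''}=y/t$, this recovers $s^4_{Y'}=\chi^{(0,1,0)}$ and $s^4_{Y''}=\chi^{(0,1,-1)}=y/t$ once everything is expressed in the final $\Yc^4$-chart. As before one first reduces to a single non-toric blow-up and then inducts on $k_i$; the point to keep track of is that the monomial $\chi^{(1,0,0)}$ in a given chart pulls back not to a single factor $(x+\mu_j^i)$ but to the accumulated product of the factors flopped so far.

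The main obstacle is the combinatorics of the fan near $C'$. Unlike the cases $\Yc^1,\Yc^2,\Yc^3$, here the single component $Y''$ absorbs all $k_i-1$ flopped curves, so I must correctly track how the iterated applications of Construction \ref{flopblow}(ii) re-triangulate the cone around $C'$, pin down which three-dimensional cone the curve $C'$ lies in afterwards, and verify both that the ray generator of $Y''$ remains $(0,2,1)$ and that the extra rays introduced do not affect the dual basis vector cutting out $Y''$. Equally delicate is making the identification of the toric coordinates of $\widehat{\Yc^3}_{/C'}$ with those of $\widehat{\Yc^4}_{/C'}$ precise, rather than tacitly assuming the two charts coincide; once this is done, the remaining dual-cone computations are routine and yield the stated monomials and $b$-values.
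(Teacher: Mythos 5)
Your main argument is the same as the paper's: reduce to the local formal toric model of Proposition \ref{toricalong}, draw the height-one slice of the fan at $C$ and at $C'$, read off the ray generators, and solve the dual linear system; the generators you anticipate ($u_Y=(0,0,1)$, $u_{Y'}=(0,1,1)$ with $u_0$ pushed to $(1,0,0)$ at $C$, and $u_{Y'}=(0,1,1)$, $u_{Y''}=(0,2,1)$ with third generator $(1,0,0)$ at $C'$) are exactly those in the paper, and they yield $\chi^{(0,1,0)}$ and $\chi^{(0,1,-1)}$ as claimed, with $b^4_{Y'}$, $b^4_{Y''}$ read off from $u_\infty$ as in the earlier lemmas. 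So the core of the proposal is correct and essentially identical to the paper's proof.

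The one concrete error is in your auxiliary consistency check, and you should not lean on it as stated: since the $B_j$, $j\neq 1$, are flopped \emph{off} $Y'$ (and \emph{onto} $Y''$), the effect on the sections is opposite to that of Corollary \ref{firstequal}. Indeed, comparing your own monomials, $s^3_{Y'}=\chi^{(-1,1,0)}$ while $s^4_{Y'}=\chi^{(0,1,0)}$, so $s^4_{Y'}$ is obtained from $s^3_{Y'}$ by \emph{multiplying} by $\chi^{(1,0,0)}$, i.e.\ by the linear factors attached to the flopped curves, not by dividing as in your relation $s^4_{Y'}\prod_{j\geq 2}(x+\mu^i_j)=s^3_{Y'}$; correspondingly the factors divide on the $Y''$ side, so $y/t$ is $s^3_{Y''}$ (Lemma \ref{lemflopsection3}), not $s^4_{Y''}$, which acquires the product over the flopped curves (compare the formulas for $s^4_{Y'}$ and $s^4_{Y''}$ displayed after Corollary \ref{relation2}). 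Since the lemma itself only asserts the monomial identities in the local toric chart, this does not affect your main argument, but as written the cross-check would not close up.
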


 \begin{proof}
 The height one local fan for the flop  is   
\begin{figure}[H]
    \centering

\tikzset{every picture/.style={line width=0.75pt}} 

\begin{tikzpicture}[x=0.75pt,y=0.75pt,yscale=-1,xscale=1]

\draw    (220,260) -- (220,200) ;
\draw    (220,260) -- (278,259.03) ;
\draw [shift={(280,259)}, rotate = 179.05] [fill={rgb, 255:red, 0; green, 0; blue, 0 }  ][line width=0.08]  [draw opacity=0] (12,-3) -- (0,0) -- (12,3) -- cycle    ;
\draw    (220,200) -- (220,140) ;
\draw    (161.89,239.66) -- (220,260) ;
\draw [shift={(160,239)}, rotate = 19.29] [fill={rgb, 255:red, 0; green, 0; blue, 0 }  ][line width=0.08]  [draw opacity=0] (12,-3) -- (0,0) -- (12,3) -- cycle    ;
\draw [color={rgb, 255:red, 245; green, 166; blue, 35 }  ,draw opacity=1 ]   (278,200) -- (220,200) ;
\draw [shift={(280,200)}, rotate = 180] [fill={rgb, 255:red, 245; green, 166; blue, 35 }  ,fill opacity=1 ][line width=0.08]  [draw opacity=0] (12,-3) -- (0,0) -- (12,3) -- cycle    ;
\draw [color={rgb, 255:red, 0; green, 0; blue, 0 }  ,draw opacity=1 ]   (161.78,169.92) -- (220,200) ;
\draw [shift={(160,169)}, rotate = 27.32] [fill={rgb, 255:red, 0; green, 0; blue, 0 }  ,fill opacity=1 ][line width=0.08]  [draw opacity=0] (12,-3) -- (0,0) -- (12,3) -- cycle    ;
\draw [color={rgb, 255:red, 245; green, 166; blue, 35 }  ,draw opacity=1 ] [dash pattern={on 4.5pt off 4.5pt}]  (278,140) -- (220,140) ;
\draw [shift={(280,140)}, rotate = 180] [fill={rgb, 255:red, 245; green, 166; blue, 35 }  ,fill opacity=1 ][line width=0.08]  [draw opacity=0] (12,-3) -- (0,0) -- (12,3) -- cycle    ;
\draw    (161.78,109.92) -- (220,140) ;
\draw [shift={(160,109)}, rotate = 27.32] [fill={rgb, 255:red, 0; green, 0; blue, 0 }  ][line width=0.08]  [draw opacity=0] (12,-3) -- (0,0) -- (12,3) -- cycle    ;

\end{tikzpicture} \caption{Local fan for $\Yc^4$}
    \label{fig:enter-label}
\end{figure}   




We first compute $s_{Y'}^4$. We have $u_0 = (1,0,0), u_Y = (0,0,1), u_{Y'} = (0,1,1)$ and thus $s_{Y'}^4 = \chi^{u_{Y'}^\vee}= \chi^{(0,1,0)}$. To compute $s_{Y''}^4$, we have $u_0 =(1,0,0), u_{Y'}=(0,1,1)$ and $u_{Y''} = (0,2,1)$. Thus $s^{4}_{Y''} = \chi^{u_{Y''}^\vee} = \chi^{(0,1,-1)}$.

 \end{proof}

 \noindent We then have the relations 
 \begin{align*}
 	s^3_{Y'}(x+\mu_1^i) = s^4_{Y'} \text{ and } s_{Y''}^3 (x+\mu_1^i) = s^4_{Y''}. 
 \end{align*}

 \begin{cor}
 \label{relation2}
     We have the relation
     
     \begin{align}
 	s^1_{Y'} = s^4_{Y'}\prod_{j\neq 1}(x+\mu_j^i). 
 \end{align}
     
\end{cor}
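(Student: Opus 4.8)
The plan is to chain together the three relations already in hand and read off the exponent of each $(x+\mu_j^i)$ that gets absorbed at each stage. First I would invoke Corollary~\ref{firstequal}, which records
\[
s^1_{Y'} \;=\; s^2_{Y'}\prod_{j=1}^{k_i}(x+\mu_j^i),
\]
the product running over \emph{all} $k_i$ non-toric exceptional curves with centre on $D_i$; this is the step where the full product appears, since in passing from $\Yc^1$ to $\Yc^2$ we flopped every such curve to $Y'$.

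Next I would use Lemma~\ref{lemflopsection3}: the pair $(\Yc^3,\Dc^3)$ is obtained from $(\Yc^2,\Dc^2)$ by a blow-up supported away from $C$, so the generator of the line bundle on $\widehat{\Yc^3}_{/C}$ is the pullback of the one on $\widehat{\Yc^2}_{/C}$, giving $s^3_{Y'} = s^2_{Y'} = \chi^{(-1,1,0)}$ and $b_{Y'}^3 = b_{Y'}^2$. Combined with the displayed blow-up relation immediately preceding the corollary, namely $s^3_{Y'}(x+\mu_1^i) = s^4_{Y'}$ (this is the single factor absorbed when only the $B_j$ with $j\neq 1$ are flopped to $Y''$ to form $\Yc^4$), I can now substitute:
\begin{align*}
s^1_{Y'} &= s^2_{Y'}\prod_{j=1}^{k_i}(x+\mu_j^i)
        = s^3_{Y'}\prod_{j=1}^{k_i}(x+\mu_j^i)
        = \bigl(s^3_{Y'}(x+\mu_1^i)\bigr)\prod_{j\neq 1}(x+\mu_j^i)
        = s^4_{Y'}\prod_{j\neq 1}(x+\mu_j^i),
\end{align*}
which is the asserted relation.

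There is no genuine obstacle here; the calculation is pure bookkeeping. The one point that deserves a sentence of care is the identification $s^3_{Y'} = s^2_{Y'}$: one must check that the blow-up $\Yc^2 \rightsquigarrow \Yc^3$ leaves untouched the formal neighbourhood of $C$ (it does, by construction of $\Yc^3$ via blowing up $\Dc_i^3\times_R\C$, which meets $Y'$ away from $C$), so that both the generating section and the self-intersection number $b_{Y'}$ are unchanged. The other subtlety worth flagging is the asymmetry in the final product: passing down through $\Yc^2$ and $\Yc^3$ absorbs the entire product $\prod_{j=1}^{k_i}(x+\mu_j^i)$ into $s^2_{Y'}=s^3_{Y'}$, while the blow-up relation $s^3_{Y'}(x+\mu_1^i)=s^4_{Y'}$ only restores the single factor indexed by $j=1$; the $k_i-1$ leftover factors are precisely $\prod_{j\neq 1}(x+\mu_j^i)$.
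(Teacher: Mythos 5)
Your argument is exactly the intended one: the paper states this corollary without proof because it follows immediately by chaining Corollary~\ref{firstequal}, the identity $s^3_{Y'}=s^2_{Y'}$ from Lemma~\ref{lemflopsection3}, and the displayed relation $s^3_{Y'}(x+\mu_1^i)=s^4_{Y'}$, which is precisely the substitution you perform. Your extra remarks (the blow-up producing $\Yc^3$ being away from $C$, and the bookkeeping of which factors are absorbed) are consistent with the paper, so the proposal is correct and takes essentially the same route.
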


 \noindent Since $s_{Y'}^1 = y$, we can write our affinoid torus coordinates in terms of the coordinates on the generic fiber as 
 
$$s_{Y'}^4 = \frac{y}{(x+\mu_1^i)}$$ and $$
 s_{Y''}^4 = \frac{y}{t\prod_{j\neq1 }(x+\mu_j^i)}. $$

\subsection{Constructing the affinoid torus coordinates}


We can repeat the argument above to construct the sections for the other irreducible components inductively. Indeed, we have that away from $X_\infty$,  
 \begin{align*}
 	s_{X_j}^{} = \frac{y}{t^{j+1}}(x+\mu_j^{i})
 \end{align*}
 where $s_{X_j} \in \Oc_{\widehat{\Yc}/C_1}(-X_j -b_{X_j}X_\infty)$ is a generator.

We can put all of this together and now describe the sections for each 1-dimensional stratum emanating from the singular vertices of $\Sk(U)$. We fix the vertex $v_{X_j} \in \rho_i$. Recall $s_{0}^{m}, s_h^{m}$ are generators of  $\Lc_0^{(m)}$ and $\Lc_h^{(m)}$ respectively and let $s_v^{(m)}$ be a generator of the line bundle $\Oc_{\widehat{\Yc/C_m}}(-X_j -b_{X_j}X_\infty)$; note we can choose $s_v^{(m)} = s_{X_j}$ for $m=1$ and for  $m=1,3$ we have the relation $s_{v}^{(m)} s_h^{(m)} = t$. Similarly, for $m=2,4$, we have the relation $s_0^{(m)} s_v^{(m)} = t $. Recall that $\mathscr{Y}_0 = \mathscr{Y}\backslash \{c_\infty\}$ where $\mathscr{Y} = \widehat{\Yc}_{/C_m} $. Then we have: 
\begin{enumerate}[label = (\roman*)]
	\item along the formal completion of $C_1$:
\begin{align*}
	(a_1,b_1) := ( s_0^{(1)},s_h^{(1)}|_{\mathscr{Y}_0}) = \left(x, \frac{t^{j+1}}{y(x+\mu_j^{i})}\right);
\end{align*}
\item along the formal completion of $C_2$: 
\begin{align*}
	(a_2,b_2) := (s_0^{(2)}|_{\mathscr{Y}_0}, s_h^{(2)}|_{\mathscr{Y}_0}) = \left(\frac{t^{j+1}}{y}, \frac{1}{x} \right);
\end{align*}
\item along the formal completion of $C_3$: 
\begin{align*}
	(a_3,b_3) := (s_0^{(3)},s_h^{(3)}|_{\mathscr{Y}_0}) = \left(\frac{1}{x},\frac{y(1+\frac{\mu_{j+1}^{i}}{x})}{t^{j+1}}\right),
\end{align*}
\item along the formal completion of $C_4$: 
\begin{align*}
		(a_4,b_4) := (  s_0^{(4)}|_{\mathscr{Y}_0}, s_h^{(4)}|_{\mathscr{Y}_0})  = \left( \frac{y}{t^{j+1}},x\right).
\end{align*}

\end{enumerate}

\subsection{Monodromy of $K$-affine structure}

With the expressions above, we can now compute the monodromy of $\Aff_K$ for the log Calabi-Yau surface $U = Y\backslash D$. 
\begin{prop}
The monodromy of the $K$-affine structure around $v_{X_j}$ with respect to the affinoid torus coordinates $(a_2,b_2) = (s_0^{(2)},s_{h}^{(2)})$ is 
	\begin{align*}
		T_{\Aff_K} : \Aff_K(\Star(\tau_{C_2})) &\longrightarrow \Aff_K(\Star(\tau_{C_2})) \\
		a_2 &\longmapsto  a_2/\mu_j^{i}\\ 
		b_2 &\longmapsto a_2b_2.
	\end{align*}
	\label{monodromy}
\end{prop}

\begin{proof}
Let $\gamma$ be a loop around $v_{X_j}$ connecting $v_{C_1}, v_{C_2}, v_{C_3}, v_{C_4}$ and $v_{C_1}$ in that order.
As in the integral affine structure computation, we take our charts to be given by $S_m :=  \Star(\tau_{C_m})$. 

The transition map $\varphi_{m,m+1}$  for $\rho^{-1}_{(\mathcal{Y},\mathcal{D})}(S_m\cap S_{m+1})$ is
\begin{center}
\begin{tikzcd}[row sep = tiny]
{\rho^{-1}_{(\mathcal{Y},\mathcal{D})}(S_m)}                        &  & {\rho^{-1}_{(\mathcal{Y},\mathcal{D})}(S_{m+1})}         \\
\cup                                                               &  & \cup                                                    \\
{\rho^{-1}_{(\mathcal{Y},\mathcal{D})}(S_m\cap S_{m+1})} \arrow[rr, "\varphi_{i,i+1}"] &  & {\rho^{-1}_{(\mathcal{Y},\mathcal{D})}(S_m\cap S_{m+1})}\\ 
x_m \arrow[rr, maps to] & & (x_m)^{p_{m}} y_m^{-1}\\
y_m\arrow[rr, maps to] & & x_m\\
\end{tikzcd}
\end{center}
where $x_m$ is a torus coordinate corresponding to $\chi^{v_{C_{m}}^\vee}$ and $y_m$ a torus coordinate corresponding to $\chi^{v_{C_{m+1}}^\vee}$ with $p_{m+1} = -(C_{m+1}\cdot H^{(m+1)} ) $. By the geometry of the model $(\Yc,\Dc)$, we know $p_2 = p_4 = 0$ and $p_1 + p_3 = -1$. 

The volume form $\Omega = \frac{dx \wedge dz}{xz}$ when restricted to the toric charts $S_m$ is equal to 
\begin{align*}
	\Omega = -\frac{da_i \wedge db_i}{a_ib_i}. 
\end{align*}
We can the then simplify the expressions: 
\begin{align*}
	b_1 \equiv \frac{t^{j+1}}{\mu_j^{i} y} \text{ and }  b_3 \equiv \frac{y} {t^{j+1}}\text{ in } \Aff_K, 
\end{align*}
since \begin{align*}
	\Res_\Omega\left(1+ \frac{a_1}{\mu_j}\right) &= \exp\left(\frac{\Res\left(\Omega \log\left(1+\frac{a_1}{\mu_j}\right)\right)}{\Res(\Omega)}\right) \\
	&= \exp\left({\Res\left(-\frac{a_1\wedge db_1}{a_1b_1}  \mathlarger{\sum}_{m=1}^\infty\left(-\frac{a_1}{\mu_j}\right)^{m+1}\right)}\right) \\ 
	&= 1
\end{align*}
\noindent and a similar argument follows for $b_3$. 

Starting in the chart $S_2$ and following $\gamma$, we track each coordinate change  below: 
\begin{align*}
	(a_2,b_2) &\xmapsto{\varphi_{2,3}} \left(\frac{1}{b_2}, \frac{1}{a_3}\right) = \left(\frac{1}{a_3}, b_3\right) \\ 
	&\xmapsto{\varphi_{3,4}} \left(b_3,a_3b_3^{p_3}\right) = \left(a_4, \frac{a_4^{p_3}}{b_4}\right) \\ 
	&\xmapsto{\varphi_{4,1}} \left(\frac{1}{b_4}, \frac{1}{a_4b_4^{p_3}}\right) = \left(\frac{1}{a_1}, \frac{\mu_j^{i}b_1}{a_1^{p_3}}\right) \\
	&\xmapsto{\varphi_{1,2}} \left(b_1, \mu^{p_1+p_3}a_1b_1^{p_1+p_3}\right)  = \left(b_1, \frac{a_1}{\mu_j^{i}b_1}\right) = \left(\frac{a_2}{\mu_j^{i}}, a_2b_2\right).
\end{align*}

Thus the monodromy of the $K$-affine structure is given by $a_2 \mapsto a_2/\mu_j^{i} $ and $b_2 \mapsto a_2b_2$ as claimed. 
\end{proof}

By construction, the coordinates $(a_2,b_2)$ correspond to $(\chi^{v_{C_1}^\vee },\chi^{v_{C_2}^\vee})$ and thus we recover the integral affine structure computation of \S \ref{Zaffine} by taking exponents. 


%

\section{Non-archimedean periods}
\label{period}

In this section, we will define the non-archimedean period map and prove the main theorem (\ref{perequalintro}). We then apply a Torelli theorem to conclude that the $K$-affine structure determines the isomorphism type of the underlying variety. 

\begin{defn}\cite[\S7.4.2]{Kontsevich2006} 
	Let $\pi: X\longrightarrow B$ be a non-archimedean SYZ fibration. Then the smooth locus $B^\sm$ has the $K$-affine structure $[\Aff_K] $ constructed in Proposition \ref{affkexp}. The \textit{non-archimedean period map} is 
	\begin{align*}
\mathcal{P}^\an: H_1(B^\sm,\check{\Lambda}) \longrightarrow H_0(B^\sm,K^\times) = K^\times 
	\end{align*}
	given by the pairing with $[\Aff_K] \in H^1(B^\sm,\Lambda\otimes K^\times)$. 
\end{defn}

\subsection{$K$-affine structure}

Let $(Y,D)$ be a generic Looijenga pair over $K$ satisfying Assumption \ref{assume} and write $U = Y \backslash D$. Let $\Sk(U)$ be the skeleton of the model $(\Yc,\Dc)$ constructed in Proposition \ref{model}. We completely described the singular integral affine structure in \S \ref{Zaffine} and the monodromy of the $K$-affine structure $\Aff_K = \frac{\rho_*(\Oc_{U^\an})}{\ker\Res_\Omega}$ in Proposition \ref{monodromy}.  

We begin by computing the \v{C}ech cocycle of the $K$-affine structure around each non-toric vertex. We follow the notation described in \S\ref{Zaffine} and \S\ref{localsing}. 

In \S\ref{localsing}, we described the transition functions for $\rho_{(\Yc,\Dc)}(\Oc_{U^\an}^\times)$  when moving between the charts given by the stars of 1-dimensional strata. We write the transition functions for $\Aff_K$ below for moving around a singularity $v_X = v_j^i$ in $\Sk(U)$ with $\mu_X = \mu_j^i$:

\begin{table}[H]
\centering
\setlength{\tabcolsep}{10pt} 
\renewcommand{\arraystretch}{2}
	\begin{tabular}{|c|c|}
	\hline 
	$S_1 \cap S_2 $ & $(s_0^{(2)}, s_h^{(2)}) = \left({\mu_Xs_h^{(1)}}, \frac{1}{s_0^{(1)}}\right)$ \\ \hline 
	$S_2 \cap S_3 $ &$(s_0^{(3)}, s_h^{(3)}) = \left(s_h^{(2)}, \frac{1}{s_0^{(2)}}\right)$ \\\hline 
	$S_3 \cap S_4 $ &$(s_0^{(4)}, s_h^{(4)}) = \left({s_h^{(3)}}, \frac{1}{s_0^{(3)}}\right)$ \\ \hline 
$S_1 \cap S_4 $ &$(s_0^{(1)},s_h^{(1)}) = \left(s_h^{(4)}, \frac{1}{s_0^{(4)}\mu_X}\right)$ \\\hline
	\end{tabular}
	\caption{Coordinate transforms for  moving around $v_X$}
		\label{tablesections}
\end{table}

Let $\Sk(U)^{[1]}$ denote the set of 1-dimensional strata of $\Sk(U)$ and $S_\tau=\Star(\tau)$ for $\tau\in \Sk(U)^{[1]}$. We define $S_{\tau,\tau'} = S_\tau \cap S_{\tau'}.$  Then we we have an open cover $\mathcal{U}$ of $\Sk(U)^\sm$ given by  $\mathcal{U} = \{S_{\tau} : \tau \in \Sk(U)^{[1]}\}$. We consider the lift $\beta := (\sigma_\tau)_\tau \in \prod_\tau \Gamma(S_{\tau},\mathcal{H}\text{om}(\text{Aff}_K,K^\times)) $ of $\alpha:= (\text{id})_\tau  \in \prod_\tau \Gamma(S_{\tau},\Hom(K^\times,K^\times))$ as described in \S\ref{cech}.



\begin{notation}
	Given a vertex $v_X = v_j^i$ corresponding to a non-toric irreducible component $X$, let $C_m$ denote the irreducible components of the boundary of $X = Y_j^i$ and $\tau_{C_m}^{(X)}$  the corresponding one dimensional stratum.  Continuing with our previous convention, we will assume that $C_1$ is  the boundary divisor containing the centre of the non-toric blowup on $X$. When it is clear, we will write $\tau_{C_m} = \tau_{C_m}^{(X)}$. 

 \end{notation}

 To completely describe the differential, there are two cases to consider: 

\begin{enumerate}[label=(\roman*)]
	\item if $\tau =\tau_{C_{m}}^{(X)} $ and $\tau' = \tau_{C_{m'}}^{(X)}$ intersect at a common vertex $v_X$. In this case, Table \ref{tablesections} describes the transition functions for the intersection $S_{\tau,\tau'}$;
	\item if $\tau = \tau_{C_{m}}^{(X)}$ and $\tau'= \tau_{C_m}^{(X')}$ with $m=2,4$ an $v_X = v_j^i$ and $v_{X'} = v_{j\pm 1}^i$.  In this case, the transition function is given by $(s_0,s_h) \mapsto (s_0^{-1},s_h)$. 
\end{enumerate}
For case  (i), around a singular vertex  $v_X$, we have 
\begin{align*}
	d_0'(\beta)_{\tau_{C_1},\tau_{C_4}} = \frac{\sigma_{\tau_{C_1}}|_{S_{\tau_{C_1},\tau_{C_4}}}}{\sigma_{\tau_{C_4}}|_{S_{\tau_{C_1},\tau_{C_4}}}}
\end{align*}
which sends \begin{align*}
	s_h^{(1)} \mapsto \frac{\sigma_{\tau_{C_1}}(s_h^{(1)})}{\sigma_{\tau_{C_4}}\left(\frac{1}{\mu_Xs_0^{(4)}}\right)} = \mu_X, \  s_0^{(1)} \mapsto  \frac{\sigma_{\tau_{C_1}}(s_0^{(1)})}{\sigma_{\tau_{C_4}}\left({s_h^{(4)}}\right)} = 1.\end{align*}
Similarly, we have 
\begin{align*}
	d_0'(\beta)_{\tau_{C_2},\tau_{C_1}} = \frac{\sigma_{\tau_{C_2}}|_{S_{\tau_{C_2},\tau_{C_1}}}}{\sigma_{\tau_{C_1}}|_{S_{\tau_{C_2},\tau_{C_1}}}}\end{align*}
given by \begin{align*}
	s_h^{(1)} \mapsto \frac{\sigma_{\tau_{C_2}}\left(\frac{s_0^{(2)}}{\mu_X}\right)}{\sigma_{\tau_{C_1}}\left(s_h^{(1)}\right)}  = \mu_X^{-1}, \ s_0^{(1)} \mapsto \frac{\sigma_{\tau_{C_2}}\left(\frac{1}{s_h^{(2)}}\right)}{\sigma_{\tau_{C_1}}\left({s_0^{(1)}}\right)}  = 1.
\end{align*}
By the same argument,
\begin{align*}
	d_0'(\beta)_{\tau_{C_4},\tau_{C_3}} = 1 \text{ and } d_0'(\beta)_{\tau_{C_3},\tau_{C_2}} = 1. 
\end{align*}

For (ii), we apply exactly the same reasoning to conclude that if $\tau$ and $\tau'$ have adjacent vertices then $d_0'(\beta)_{\tau,\tau'} = 1$.

Let $\phi_{\lambda,c} \in \Hom(
\Aff_K,K^\times)$ denote the image of $\lambda \otimes c \in \Lambda\otimes K^\times$ under the morphism given in (\ref{dualKaff}). 
We then have the following result: \begin{cor}
 The lift $\gamma$ of $d_0'(\beta)$ to $\prod_{ \tau,\tau' } \Gamma(S_{\tau,\tau'},\Lambda\otimes K^\times)$ is given by 
\begin{align*}
	(\check{\gamma}_{\tau,\tau'})_{\tau, \tau'} = \begin{cases}
		\phi_{\mathbf{e}_h^{(1)}, \mu_X} & \text{ if } (\tau, \tau') = (\tau_{C_1}^{(X)},\tau_{C_4}^{(X)}) \\ 
		\phi_{\mathbf{e}_h^{(1)},\mu_X^{-1} } & \text{ if } (\tau, \tau') = (\tau_{C_1}^{(X)},\tau_{C_2}^{(X)})\\
		1 &\text{ otherwise.}
	\end{cases}
\end{align*}
\label{affklog}
\end{cor}

Since all finite intersections of $U_{\tau}$ are contractible and $\Lambda \otimes K^\times$ is constant on $U_{\tau}$, we have $H^i(U_{\tau_0,...,\tau_k}, \Lambda \otimes K^\times) = 0$. In particular, we have explicitly described the representative of the $K$-affine structure in $\check{H}^1(\Sk(U)^\sm,\Lambda\otimes K^\times) = H^1(\Sk(U)^\sm,\Lambda\otimes K^\times)$. 
%
\subsection{Local tropical cycles}

In this section, we will describe the pairing between a local tropical cycle \ie a single loop surrounding a focus-focus singularity with trailing edge depicted below, and the $K$-affine structure we described in the previous section.

We first describe a local tropical cycle with respect to the open cover above. For each loop around a non-toric vertex $v_X$, the tropical cycle consists of four edges between $\tau_{C_m}^{(X)}$ and $\tau_{C_{m+1}}^{(X)}$. The vertices of the edges will lie on the 1-dimensional strata $\tau_{C_m}^{(X)}$. We orient the underlying graph of the tropical cycle in the anticlockwise direction around the vertex $v_{X}$. The trailing edge can be chosen to be contained entirely in an intersection $S_{\tau,\tau'}$ 

There are two choices of decoration (up to sign) of the graph underlying the tropical cycle.  The first is given by equipping each edge with the local invariant cotangent direction, as computed in \S \ref{Zaffine}. Note that in this case, the trailing edge is decorated with the zero vector, and can be discarded.  The other decoration is depicted below where the leading edge away from the singularity is decorated with a primitive invariant cotangent direction. For $X= Y_j^i$,  by Corollary \ref{invariant}, the invariant cotangent direction is parallel to $\Lambda_{\rho_i}$.

Recall, we made the following convention for local tropical cycles in \S\ref{tropicalisationsec}:
\begin{conv}
A counterclockwise loop around a singularity with trailing edge directed towards the origin and decorated with the invariant cotangent direction away from the origin is \textit{positively oriented}. 
\end{conv}



\begin{defn}
	Let $\check{\gamma}$ be a positively oriented local tropical cycle around the vertex $v_X$. Then $\check{\gamma}$ is either a local \textit{invariant cycle} or a local \textit{twisted cycle} as depicted in Figure \ref{localtrop} on the left and right respectively. 
\end{defn}
 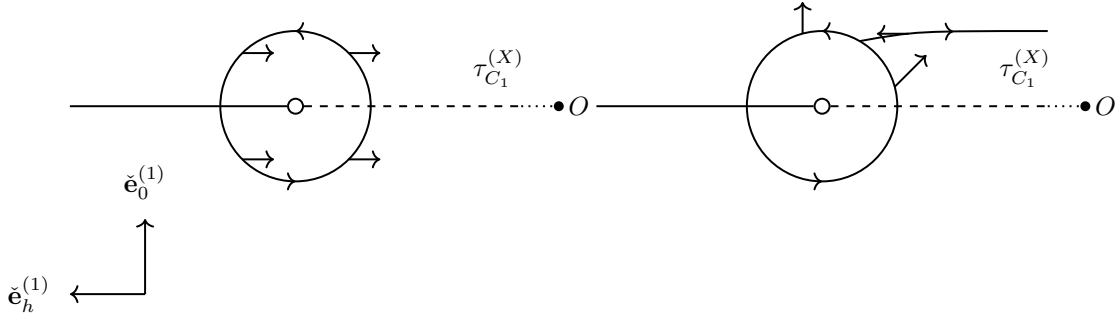
\begin{figure}[H]

	\centering

\tikzset{every picture/.style={line width=0.75pt}} 
\begin{tikzpicture}[circ/.style={shape=circle, inner sep=2pt, draw, node contents=}]
\begin{scope}[xshift = -3.5cm]
	
  	\draw [->] (-2,-2.5) -- (-3,-2.5);   
 	\draw [->] (-2,-2.5) -- (-2,-1.5); 
	\node [label=above:$\check{\mathbf{e}}_{0}^{(1)}$] at (-2,-1.5){}; 
\node [label=left:$\check{\mathbf{e}}_{h}^{(1)}$] at (-3,-2.5){};

	\draw  node (x) at (0,0) [circ]; 
	\draw [ decoration={markings,  mark=at position 0.25 with {\arrow{>}}, mark=at position 0.75 with {\arrow{>}}},         postaction={decorate}](0,0) circle [radius=1cm]; 
 	\draw [dashed](x)-- (3,0); 
 	\draw (x) -- (-3,0);
    \draw[dotted] (3,0) -- (3.5,0);
    \filldraw[black] (3.5,0) circle (1.5pt) node[anchor=west]{$O$};
 	
%
 	\draw [->] (-135:1cm) -- ([xshift=12pt, yshift=0pt] -135:1cm); 
 	\draw [->] (-45:1cm) -- ([xshift=12pt, yshift=0pt] -45:1cm); 
 	\draw [->] (45:1cm) -- ([xshift=12pt, yshift=0pt] 45:1cm); 
 	\draw [->] (135:1cm) -- ([xshift=12pt, yshift=0pt] 135:1cm); 
 	
%
    \node [label=above:$\tau_{C_1}^{(X)}$] at (2.7,0){};


%
\end{scope}
\begin{scope}[xshift = + 3.5cm]

\draw  node (x) at (0,0) [circ]; 
	\draw [ decoration={markings,  mark=at position 0.25 with {\arrow{>}}, mark=at position 0.75 with {\arrow{>}}},         postaction={decorate}](0,0) circle [radius=1cm]; 
 	\draw [dashed](x)-- (3,0); 
 	\draw (x) -- (-3,0); 
 	
 	
 	\draw [->] (40:1.5cm) -- ([xshift=-12pt, yshift=0pt] 40:1.5cm); 
 	\draw [->] (15:1cm) -- ([xshift=12pt, yshift=12pt] 15:1cm); 
 	\draw [->] (105:1cm) -- ([xshift=0pt, yshift=12pt] 105:1cm); 
 	\draw  [decoration={markings, mark=at position 0.5 with {\arrow{>}}}, postaction={decorate}](60:1cm) .. coordinate [pos=.3] (a) controls (1.25,1) .. ([xshift=3cm, yshift=1cm] 0:0cm ); 
 	
%
\node [label=above:$\tau_{C_1}^{(X)}$] at (2.7,0){};
\draw[dotted] (3,0) -- (3.5,0);
    \filldraw[black] (3.5,0) circle (1.5pt) node[anchor=west]{$O$};

%

\end{scope}
\end{tikzpicture} 
	\caption{Local tropical cycles.}
	\label{localtrop}
\end{figure}

\begin{prop}
	Let $\check{\gamma}$ be a local positively oriented tropical cycle surrounding the vertex $v_X$. Then the pairing of $\check{\gamma}$ with the \v{C}ech cocycle representing the $K$-affine structure is 
	\begin{align*}
		\langle \check{\gamma}, [\Aff_K]\rangle = \begin{cases}
			1 & \text{ if } \gamma \text{ is an invariant cycle}\\ 
			\mu_X &\text{ if } \gamma \text{ is a twisted cycle}
		\end{cases}
	\end{align*}
	where $\langle\cdot  ,\cdot \rangle : \check{\Lambda}\otimes (\Lambda\otimes K^\times) \longrightarrow K^\times$.  
	\label{localpairing}
\end{prop}

\begin{proof}

	Corollary \ref{affklog} gives a description of the representative of the $K$-affine structure on each intersection $S_{\tau,\tau'}$. In particular, the only regions $S_{\tau,\tau'}$ which will contribute non-trivially to the pairing will be $(\tau, \tau') =  (\tau_{C_1}^{(X)},\tau_{C_4}^{(X)})$ or $(\tau_{C_1}^{(X)},\tau_{C_2}^{(X)})$.
	
	If $\gamma$ is an invariant cycle, then the cotangent vector decorating it is given by the invariant direction as depicted in Figure \ref{localtrop}. By the integral affine structure computation in \S \ref{Zaffine}, the invariant cotangent direction is parallel to $\check{\mathbf{e}}_h^{(1)}$; assume $\check{\gamma}$ is equipped with the cotangent vector $\check{\mathbf{e}}_h^{(1)}$. Since the $K$-affine structure is trivial on $S_{\tau_{C_2},\tau_{C_3}}$ and $S_{\tau_{C_3},\tau_{C_4}}$, it does not contribute to the pairing. For $(\tau, \tau') =  (\tau_{C_1}^{(X)},\tau_{C_4}^{(X)})$, we have  $\gamma_{\tau,\tau'} = \phi_{{\check{\mathbf{e}}^{(1)}_h},\mu_X}$ and thus the pairing of $\check{\gamma}$ with $\gamma_{\tau,\tau'}$ is $\mu_X$. Similarly, for $(\tau,\tau') =(\tau_{C_1}^{(X)},\tau_{C_2}^{(X)})$, we have $\gamma_{\tau,\tau'} = \phi_{\check{\mathbf{e}}_h^{(1)}, \mu_X^{-1}}$ and thus the pairing of $\check{\gamma}$ with $\gamma_{\tau,\tau'}$ is $\mu_X^{-1}$. Putting this together, the pairing of the $K$-affine structure with the local invariant cycle $\check{\gamma}$ is $\mu_X \cdot \mu_X^{-1} = 1$.

	If $\gamma$ is a twisted cycle as depicted above, then the cotangent vector decorating $\check{\gamma}$ on $U_{\tau,\tau'}$ is $\check{\mathbf{e}}_0^{(1)}$ if $ (\tau, \tau')=  (\tau_{C_1}^{(X)},\tau_{C_4}^{(X)})$ and 
			$-\check{\mathbf{e}}_h^{(1)} + \check{\mathbf{e}}_0^{(1)}$ if $(\tau, \tau') = (\tau_{C_1}^{(X)},\tau_{C_2}^{(X)})$. Thus the pairing with the $K$-affine structure is $(\mu_X^{-1})^{-1} = \mu_X$ from the contribution on $S_{\tau_{C_1}^{(X)},\tau_{C_2}^{(X)}}$. 
			
	
	\end{proof}


\subsection{Non-archimedean period map}

We will now use the description of the local pairing of the $K$-affine structure with local tropical cycles to completely describe the non-archimedean period map.  

\begin{defn}
	Let $\gamma \in H_1(\Sk(U)^\sm,\check{\Lambda})$. We say $\gamma$ is 
	\begin{enumerate}
		\item an \textit{invariant cycle} if it is given by a loop around a singularity decorated with an invariant cotangent vector; 
		\item an \textit{winged tropical cycle} if under the tropical correspondence \ref{tropcorr}, $\gamma$ corresponds to a simple tropical wing; 
		\item a \textit{spoked tropical cycle} if under the tropical correspondence \ref{tropcorr}, $\gamma$ corresponds to a simple tropical spoke. 
	\end{enumerate}
\end{defn}

Note it is enough to compute the non-archimedean period map for winged tropical cycles and spokes. 

\begin{thm}
\label{naper}
Let 
\begin{align*}
	\Sk(U)^\text{sing} \coloneqq \{v_X : X \text{ is a non-toric irreducible component of } \Yc_k\}
\end{align*}
be the set of singular vertices in $\Sk(U)$. Then the non-archimedean period map $$\mathcal{P}^\an : H_1(\Sk(U)^\sm, \check{\Lambda}) \longrightarrow K^\times,$$ giving by the pairing with $[\Aff_K]$ is as follows: 
	\begin{enumerate}
		\item if $\check{\gamma}$ is an invariant cycle, then $\mathcal{P}^\an=1$;
		\item if $\check{\gamma}$ is a winged tropical cycle which surrounds $v_X$ with positive orientation and $v_{X'}$ with negative orientation, then $\mathcal{P}^\an(\check{\gamma}) = \frac{\mu_{X}}{\mu_{X'}}$;
		\item if $\check{\gamma}$ is a spoked tropical cycle which consists of a $\epsilon_X$-oriented loop around $v_X$ with  $X\in S \subseteq \Sk(U)^\text{sing}$,  then \begin{align*}
			\mathcal{P}^\an(\check{\gamma}) = \prod_{X \in S }\left(\mu_X\right)^{\epsilon_X}. 
		\end{align*}
	\end{enumerate}
\end{thm}

\begin{proof}

This follows immediately from Proposition \ref{localpairing} where we described the pairing for local tropical cycles and noting that the emanating edges  do not contribute to the pairing as they are decorated with the invariant cotangent vector. 
%
\end{proof}

This implies the following corollary:

\begin{cor}
	The non-archimedean period map $\mathcal{P}^\an$ factors through $H_1(\Sk(U),\iota_*\check{\Lambda})$. 
\end{cor}

\subsection{Comparison between algebraic periods and non-archimedean periods} 

We now complete the proof of Theorem \ref{perequalintro}, which says that the non-archimedean period map agrees with the algebraic period map for generic log Calabi-Yau surfaces.


We recall the definition of the algebraic period map 
\begin{align*}
	\mathcal{P}: D^\perp &\longrightarrow \Pic^0(D) \stackrel{\ref{gm}}\cong \G_m \\ 
	L &\longmapsto L\rvert_D.
\end{align*}
The line bundle $L|_D $ is of multidegree $(0,\dots,0)$ and we can write the divisor $\ell_D$ associated to $L|_D$ as $\ell_D = \sum_{i=1}^n\sum_{j=1}^{r_i} (q_{ij} - p_{ij})$ with $q_{ij}, p_{ij} \in D_i^{\circ}$. By the isomorphism $\psi: \Pic^0(D)\cong \G_m$ constructed in Lemma \ref{gm}, we can rewrite this  as $$\ell_D = \sum_{i\in I} \epsilon_{i}(q_{j_i}^i - p_{j_i}^i)$$ with $q_{j_i}^i, p_{j_i}^i \in D_i^{\circ}$, where $q_{j_i}^i$ is the centre of a non-toric blowup on $D_i$ corresponding to the non-toric exceptional curve  $E_{j_i}^i$,  $I \subset \{1,..., \ell(D)\}$ and $\epsilon_i \in \Z$. 

Suppose $\ell_D \equiv \ell_D' := \sum_{i\in I} \epsilon_{i}(q_{j_i}^i - \bar{p}_{j_i}^i)$. By Lemma \ref{gm} we have $\prod_i{z_i(p_{j_i}^i)^{\epsilon_i}}=\prod_i{z_i(\bar{p}_{j_i}^i)^{\epsilon_i}} $ where $z_i$ is an affine coordinate on $D_i$. Thus for each $i$ we can fix a $p_i \in D_i^\circ$ which is not a centre of a non-toric blowup on $D_i$ and write $\ell_D = \sum_{i \in I} \epsilon_i(q_{j_i}^i - p_i)$.  From now on, we choose $z_i$ such that it is compatible with the choice of coordinates we made in \S \ref{localsing} \ie $z_i(q_j^i) = \mu_j^i$  and we have $z_i(p_i) = 1$. 


We can now prove the main result of this thesis.

\begin{thm}
	The non-archimedean period map $\mathcal{P}^\an: H_1(\Sk(U),i_*\Lambda) \longrightarrow K^\times$ agrees with the algebraic period map $\mathcal{P}: D^\perp \longrightarrow K^\times$ \ie the following diagram commutes.
	\begin{figure}[H]
	\begin{center}
		\begin{tikzcd}
			H_1(\Sk(U),i_*\Lambda)_{\text{tf}}\ar[rr,"\mathcal{P}^\an"]\ar[rd,"\sim"]\ar[rd] & & K^\times \\ 
			&D^\perp \ar[ur,"\mathcal{P}"] & 
		\end{tikzcd}
	\end{center}
	\end{figure}
	\label{perequal}
\end{thm}

\begin{proof}
Let $\check{\gamma}$ be a spoked tropical cycle and $L_{\check{\gamma}}$ the corresponding element in $D^\perp$. Then by the description above, we can assume $L_{\check{\gamma}}|_D = \Oc_D( \sum_{i \in I} \epsilon_{i}(q_{j_i}^i-p_{i}))$.  We then have \begin{align*}
	\mathcal{P}(L_{\check{\gamma}}) = \prod_{i \in I} \left(\frac{z_{i}(q_{j_i}^i)}{z_{i}(p_{i})}\right)^{\epsilon_{i}} =  \prod_i (\mu_{j_i}^i)^{\epsilon_{i}}.  
\end{align*}
By Remark \ref{orientmatch}, we have that $\check{\gamma}$ has a $\epsilon_i$-oriented local twisted tropical cycle around $v_{j_i}^i$. Thus $\mathcal{P}^\an(\check{\gamma}) = \mathcal{P}(L_{\check{\gamma}})$.

Let $\check{\gamma}$ be a winged tropical cycle and $L_{\check{\gamma}}$ the corresponding element in $D^\perp$. We can assume  $L_{\check{\gamma}}|_D = q_{j}^i- q_{k}^i$ and thus 
\begin{align*}
	\mathcal{P}(L_{\check{\gamma}}) = \frac{z_i(q_{j}^i)}{z_i(q_{k}^i)} = \frac{\mu_{{j}}^i}{\mu_{{k}}^i},
\end{align*}
which agrees with $\mathcal{P}^\an(\check{\gamma})$ since $\check{\gamma}$ has a positively oriented local twisted cycle around $v_j^i$ and a negatively oriented  local twisted cycle around $v_k^i$ by Remark \ref{orientmatch}.  Having checked the diagram on generators, we conclude the diagram commutes. This completes the proof of the theorem.

%

\end{proof}

The following result realises the motivating theme of this paper: that the essential skeleton and a $K$-affine structure on it remember a great deal of geometric information. 
\begin{cor} Let $(Y,D)$ be a generic Loojenga pair satisfying Assumption \ref{assume}. Then the $K$-affine structure $\Aff_K$ on $\Sk(U)$ determines the isomorphism type of $U$.
	\label{isocorr}
\end{cor}

\begin{proof}
	By Theorem \ref{perequal}, the non-archimedean period map computes the period point $D^\perp \rightarrow K^\times$. This determines an element of the moduli stack of Looijenga pairs with a marking of $\Pic(Y)$ by \cite[Proposition 2.9 + \S6]{gross_hacking_keel_2015}. In particular, it determines the isomorphism type of $U= Y\backslash D$. 
\end{proof}
\subsection{Analytic continuation to the non-generic case}

In this section, we seek to remove the assumption of genericity on the Looijenga pair $(Y,D$).

\begin{defn}
	Let $\pi: (\mathbf{Y},\mathbf{D}) \rightarrow S$ be a family of Looijenga pairs over a reduced connected space $S$. A \textit{marking} of the family $\pi$ is an isomorphism $R^2\pi_*\underline{\Z} \cong \underline{\Upsilon}$ where $\Upsilon$ is a lattice isomorphic to $H^2(\mathbf{Y}_s,\Z)$ for some $s \in S$ and $\underline{\Upsilon}$ is the constant local system on $S$ with fibre $\Upsilon$. A marking  exists if and only if the monodromy homomorphism of the family is trivial. Such families will be referred to as \textit{marked}. 
\end{defn}

We have the following proposition concerning uniqueness of analytic continuation. 

\begin{prop}[\cite{bassat} Prop. 4.2]
	Let $X$ be a smooth connected strictly $K$-analytic space and $Z$ a non-empty affinoid subdomain of $X$. Then the restriction map $\Oc(X) \rightarrow \Oc(Z)$ is injective. 
	\label{analyticcont}
\end{prop}

Let $(\overline{Y},\overline{D})$ be a toric Looijenga pair. Let $N> 0$ and  $(k_1,...,k_{\ell(D)}) \in \Z^{\ell(D)}_{\geq 0}$ be a partition of $N > 0$ \ie $\sum_i k_i = N$. Let $\G_{m,\circ}^n \subset \G_m^n$ denote the elements $(x_i) \in \G_m^n$ such that $|x_i|_K\leq 1$.   We can then define the marked family of Looijenga pairs $$\pi : (\mathbf{Y},\mathbf{D}) \rightarrow \prod_{i=1}^{\ell(D)} \G_{m,\circ}^{k_i} =\G_{m,\circ}^N$$ such that the fibre of $(p_{ij}) \in \prod_{i=1}^{\ell(D)}\prod_{j=1}^{k_i}\G_{m,\circ}$ is given by the Looijenga pair arising from non-toric blowups of $(\overline{Y},\overline{D})$ at $p_{ij} \in \overline{D}_i^\circ$. Fixing a basis for $\Upsilon$, we have an isomorphism $\Hom(\Upsilon,K^\times) \cong (K^\times)^r$ where $r = \rank(\Upsilon)$. Define the following closed subvariety of $\G_{m,\circ}^N$:

\begin{align*}
	\Delta := \{ (x_{ij}) \in \prod_{i=1}^{\ell(D)}
	\prod_{j=1}^{k_i}\G_{m,\circ} \ : \ \overline{x}_{ij} = \overline{x}_{ik} \} 
\end{align*}
where $\overline{x}_{ij}$ denotes the reduction of $x_{ij}$ in $\Oc_K/(t) = \C$.  
This set is precisely the points $(p_{ij})$ for which the specialisations coincide; this also includes the case of infinitely near points \ie  when we perform a non-toric blowup at the same point more than once and thus the generic fiber is no longer a generic Looijenga pair.

The non-archimedean period map is defined on the generic locus of this family \ie away from $\Delta$. The map can be extended naturally to all of $\G_{m,\circ}^N$ by the formula given in Theorem \ref{naper}.  We thus have a map 
\begin{align*}
	\G_{m,\circ}^N \longrightarrow (K^\times)^r 
\end{align*}
which we can  analytify to produce 
\begin{align*}
	\nu: (\G_{m,\circ}^N)^\an \longrightarrow (K^\times)^r. 
\end{align*}
The composition with projection onto each factor of $(K^\times)^r$ is analytic and thus $\nu$ is analytic. The non-archimedean period map has been defined on $(\G_{m,\circ}^N)^\an\backslash \Delta^\an$ and by Proposition \ref{analyticcont}, any extension to $(\G_{m,\circ}^N)^\an$ is unique. Thus the non-archimedean period can be uniquely extended to cover the non-generic case. As a consequence, we have the following theorem:

\begin{thm}
	Let $(\Yc,\Dc)$ be a Looijenga pair over $R$ with good reduction such that the combinatorial type of the generic fibre and special fibre agree. Then the non-archimedean period map for $U= (\Yc\backslash \Dc)_K$ agrees with the algebraic period map for $U$. 
    \label{perequalfinal}
\end{thm}

\begin{proof}
	Given a generalised exceptional curve $C_1+...+C_{k-1}+ E$ arising from blowing up infinitely near points on $D$, we have $C_i \in D^\perp$ for $1\leq i\leq k-1$. Since $C_i$ is disjoint from $D$, we have that $\mathcal{P}(C_i) = 1$. This agrees with the non-archimedean period map and thus the proof is complete. 
\end{proof}

\bibliographystyle{alpha}
\bibliography{bib}

\end{document}